\documentclass[a4paper]{amsart}
\usepackage[utf8]{inputenc}
\usepackage[english]{babel}
\usepackage{amscd,amssymb}
\usepackage{amsmath}
\usepackage{amsthm}
\usepackage{graphicx}
\usepackage{fancybox}
\usepackage{amstext}
\usepackage{mathtools}
\usepackage{esint}
\usepackage[square,numbers]{natbib}
\usepackage{booktabs}
\usepackage[hidelinks]{hyperref}
\usepackage{comment}
\usepackage{tikz,pgfplots}
\usepackage{pgfplotstable}
\usepackage{subcaption}

\usepackage[noabbrev, capitalise, nameinlink]{cleveref}
\crefname{equation}{}{}
\crefname{assumption}{Assumption}{Assumptions}
\crefformat{equation}{\textup{#2(#1)#3}}

\newcounter{proofstep}
\crefname{proofstep}{Step}{Steps}
\Crefname{proofstep}{Step}{Steps}
\newcounter{proofblock}

\newcommand*{\proofstep}[1]{\refstepcounter{proofstep}\emph{Step \theproofstep\ (#1).}}
\newcommand*{\resetproofsteps}{\setcounter{proofstep}{0}\stepcounter{proofblock}}
\usepackage{multirow}
\usepackage{standalone}

\usepackage{dsfont}

\usepackage{pgf}
\pgfplotsset{compat=1.18}
\usepackage{lmodern}
\usepackage[dvipsnames]{xcolor}

\everymath=\expandafter{\the\everymath\displaystyle}
\IfFileExists{scrextend.sty}{
  \usepackage[fontsize=10.000000pt]{scrextend}
}{
  \renewcommand{\normalsize}{\fontsize{10.000000}{12.000000}\selectfont}
  \normalsize
}

\ifdefined\pdftexversion\else  \usepackage{fontspec}
\fi
\makeatletter\@ifpackageloaded{underscore}{}{\usepackage[strings]{underscore}}\makeatother

\newcommand*{\plotdatadir}{}

\usepackage{aliascnt}
\newtheorem{theorem}{Theorem}[section]

\newaliascnt{corollary}{theorem}

\aliascntresetthe{corollary}

\newaliascnt{lemma}{theorem}
\newtheorem{lemma}[lemma]{Lemma}
\aliascntresetthe{lemma}

\newaliascnt{example}{theorem}

\aliascntresetthe{example}

\theoremstyle{definition}

\newaliascnt{definition}{theorem}

\aliascntresetthe{definition}

\newaliascnt{assumption}{theorem}

\aliascntresetthe{assumption}

\theoremstyle{remark}

\newaliascnt{remark}{theorem}
\newtheorem{remark}[remark]{Remark}
\aliascntresetthe{remark}

\numberwithin{theorem}{section}
\numberwithin{equation}{section}
\numberwithin{figure}{section}
\numberwithin{table}{section}

\def\diam{\operatorname{diam}}

\def\with{\;\mid\;}

\DeclareMathOperator*{\esssup}{ess\,sup}

\newcommand*{\bvec}[1]{\bar{\vec{#1}}}
\newcommand{\norm}[1]{{\left\lVert{#1}\right\rVert}}
\newcommand{\jump}[1]{{[\![ #1 ]\!]}}

\newcommand{\setNodeDir}{\ensuremath{\setNode_\textup D}}
\newcommand{\setNodeD}{\setNodeDir}

\newcommand{\uD}{\ensuremath{\vu_\mathrm{D}}}
\newcommand{\lambdaD}{\ensuremath{\vec\lambda_\mathrm{D}}}

\newcommand{\rD}{\ensuremath{\vr_\mathrm{D}}}

\newcommand{\vLambda}{\ensuremath{\vec\Lambda}}

\renewcommand*{\vec}{\boldsymbol}

\newcommand*{\graph}{\ensuremath{\mathcal G}}
      \newcommand*{\Mgraph}{\ensuremath{\mathcal M_{\graph}}} \newcommand*{\Lgraph}{\ensuremath{\mathcal L_{\graph}}}     \newcommand*{\setEdge}{\ensuremath{\mathcal E}}
\newcommand*{\Edges}{\setEdge}
\newcommand*{\setNode}{\ensuremath{\mathcal N}}
\newcommand*{\edge}{\ensuremath{\mathfrak e}}
\newcommand*{\node}{\ensuremath{\mathfrak n}}

\newcommand{\delx}{\partial_{x}}
\newcommand{\R}{{\mathbb{R}}}

\newcommand{\Nodes}{\setNode}

\newcommand*{\vu}{\ensuremath{\vec u}}
\newcommand*{\vr}{\ensuremath{\vec r}}
\newcommand*{\vn}{\ensuremath{\vec n}}
\newcommand*{\vm}{\ensuremath{\vec m}}
\newcommand*{\vf}{\ensuremath{\vec f}}

\newcommand*{\del}{\ensuremath{\partial}}

\newcommand*{\wqtest}{{ \vec w_{\normalfont\textrm{q}}}}

\newcommand*{\wytest}{{\vec w_{\normalfont\textrm{y}}}}
\newcommand*{\wqtestd}{{ \bar{\vec w}_{\normalfont\textrm{q}}}}
\newcommand*{\wztestd}{{ \bar{\vec w}_{\normalfont\textrm{z}}}}
\newcommand*{\wytestd}{{ \bar{\vec w}_{\normalfont\textrm{y}}}}

\newcommand*{\vqd}{\ensuremath{\bar {\vec q}}}
\newcommand*{\vzd}{\ensuremath{\bar {\vec z}}}
\newcommand*{\vyd}{\ensuremath{\bar {\vec y}}}
\newcommand*{\vlD}{\ensuremath{{\vec \lambda}_\setNode}}
\newcommand*{\vlh}{\ensuremath{{\vec \lambda}}}
\newcommand*{\vel}{\ensuremath{\vec\eta_{\vec\lambda}}} \newcommand*{\vldD}{\ensuremath{\bar {\vec \lambda}_\setNode}}
\newcommand*{\vldh}{\ensuremath{\bar {\vec \lambda}}}

\newcommand*{\vue}{\ensuremath{\vu_\edge}}
\newcommand*{\vre}{\ensuremath{\vr_\edge}}

\newcommand*{\vPiq}{\vec{\Pi}^\mathrm{q}}
\newcommand*{\vPiy}{\vec{\Pi}^\mathrm{y}}
\newcommand*{\vq}{\vec q}
\newcommand*{\vy}{\vec y}
\newcommand*{\vw}{\vec w}

\newcommand*{\vix}{\vec{i}_{\times}}

\newcommand*{\Cy}{\vec{C}_\mathrm{y}}

\numberwithin{equation}{section}

\newenvironment{widedisplay}[1][60pt]{\begingroup
  \everydisplay\expandafter{\the\everydisplay
    \displaywidth=\dimexpr\displaywidth+#1\relax}}{\endgroup\ignorespacesafterend}

\AtBeginDocument{\def\MR#1{}
}

\newcommand*{\Hpw}{\ensuremath{\mathbf{H}^1_\mathrm{pw}(\Sigma)}}

\newcommand*{\Hone}{\ensuremath{\mathbf{H}^1(\Sigma)}}
\newcommand*{\Ltwoe}{\ensuremath{\mathbf{L}^2(\edge)}}
\newcommand*{\Ltwo}{\ensuremath{\mathbf{L}^2(\Sigma)}}
\newcommand{\polyp}{p}
\newcommand{\poly}{\mathbb{P}_\polyp}
\newcommand{\spoly}{\vec{V}^p}
\newcommand*{\vtheta}{\vec\theta}
\newcommand*{\veps}{\vec\epsilon}
\newcommand*{\rmq}{\mathrm{q}}
\newcommand*{\rmy}{\mathrm{y}}
\newcommand*{\vtq}{\vtheta^\rmq}
\newcommand*{\vty}{\vtheta^\rmy}
\newcommand*{\veq}{\veps^\rmq}
\newcommand*{\vey}{\veps^\rmy}
\newcommand*{\tDti}{\tfrac{2}{\Delta t}}
\newcommand*{\elly}[1]{\ell_{\mathrm{y}}^{#1}}
\newcommand*{\ellz}[1]{\ell_{\mathrm{z}}^{#1}}
\newcommand*{\Emax}{\ensuremath{{\mathfrak E}^\mathrm{max}}}

\title[An HDG method for
wave propagation in elastic beam networks]{A hybridizable discontinuous Galerkin method for
wave propagation in elastic beam networks}
\author[M.~Hauck, J.~Holten, A.~M{\aa}lqvist, A.~Rupp, L.~Swoboda]{Moritz Hauck$^*$, Joseph Holten$^*$, Axel M{\aa}lqvist$^\dagger$, Andreas Rupp$^\ddagger$, Lucia~Swoboda$^\dagger$}
\address{${}^*$ Institute for Applied and Numerical Mathematics, Karlsruhe Institute of Technology, Englerstr.~2, 76131 Karlsruhe, Germany}
\email{moritz.hauck@kit.edu}
\email{joseph.holten@kit.edu}
\address{${}^{\dagger}$ Department of Mathematical Sciences, Chalmers University of Technology \& University of Gothenburg, Chalmers Tvärgata 3, 412 96 Göteborg, Sweden}
\email{axel@chalmers.se}
\email{lucias@chalmers.se}
\address{${}^{\ddagger}$ Department of Mathematics, Faculty of Mathematics and Computer Science, Saarland University, Campus E1.1, 66123 Saarbrücken, Germany}
\email{andreas.rupp@uni-saarland.de}
\thanks{Corresponding author: Joseph Holten, \texttt{joseph.holten@kit.edu}}

\begin{document}

\begin{abstract}
This paper studies the numerical solution of elastic wave propagation on networks, modeled by elastodynamic equations posed on each edge, coupled at the nodes through suitable transmission conditions.
We propose and analyze a hybridizable discontinuous Galerkin method that exploits the network structure to reduce the global problem at each time step to a linear system whose size depends only on the number of network nodes and not on the polynomial degree of the discretization.
Combining it with an energy-conservative implicit time discretization, we derive a priori error estimates of optimal order in space and time.
The implicit time discretization avoids the severe CFL restriction caused by the large variation in fiber segment lengths.
To efficiently solve the resulting, typically ill-conditioned global system, we introduce a two-level overlapping additive Schwarz preconditioner.
Under suitable assumptions on the network, we establish uniform convergence of the resulting preconditioned conjugate gradient method. Numerical experiments confirm the theoretical findings.
\end{abstract}

\keywords{spatial network model, Timoshenko beam network, elastic wave propagation, hybridizable discontinuous Galerkin method, two-level domain decomposition}

\subjclass{35R02, 65M12, 65M15, 65M55, 65M60} 

\maketitle

\section{Introduction}
Many applications in science and engineering involve geometrically complex structures composed of slender, effectively one-dimensional components. Prominent examples include blood vessels~\cite{Blanco2014}, porous media~\cite{CEPT12}, and fiber-based materials~\cite{KMM20}. For such systems, resolving all microscopic details in a fully three-dimensional simulation is often computationally prohibitive.
A natural alternative is to represent the geometry as a spatial network, described by a graph~$\mathcal{G} = (\mathcal{N}, \mathcal{E})$ consisting of nodes and edges embedded in a bounded domain~$\Omega \subset \mathbb{R}^3$. The resulting spatial network model comprises one-dimensional differential equations posed along the edges, which are coupled via algebraic conditions at the nodes. In fiber-based materials, nodes are typically placed at fiber intersections, while edges represent the fiber segments connecting them; see~\cite{Picu2011,Kulachenko2012} for an overview of such models. To accurately capture the complex mechanics of fiber interactions, the graph topology may be refined by introducing additional nodes and edges at intersections. In the context of paper-based materials, this strategy has been employed in~\cite{KMM20,Grtz2022,Grtz2024} to accurately capture the stiffness of fiber bonds.
While spatial network models can be naturally coupled with bulk PDEs to describe, for example, fiber-reinforced composites~\cite{Khristenko2021} and vascular networks interacting with surrounding tissue~\cite{FKOWW22}, this work focuses on pure network models for fiber-based materials such as paper and cardboard.

In this paper, we consider elastic wave propagation in fiber networks as a model problem. The corresponding spatial network model comprises dynamic Timoshenko beam equations governing the elastic vibrations of individual beams~\cite{Timoshenko1921}, coupled at the network nodes by continuity conditions on displacements and rotations, as well as balance conditions for forces and moments~\cite{Lagnese1994}.
To discretize the spatial network model, we employ a hybridizable discontinuous Galerkin (HDG) method combined with an energy-conservative implicit time stepping scheme. For an overview of HDG methods, we refer to~\cite{CockburnGL09}; for applications to single Timoshenko beams and static networks of such beams, see~\cite{CelikerCS10} and~\cite{hauck2025}, respectively.
The primary advantage of such a hybrid approach in the network setting stems from the zero-dimensional nature of the coupling points: at each time step, the global system of equations can be reduced to a system defined only on the network nodes, independently of the polynomial degree of the discretization. The additional nodes introduced by the spatial discretization of the fiber segments can also be condensed out locally. A similar condensation strategy for diffusion-type PDEs on networks of hypersurfaces is detailed in~\cite{RuppGK22}.
The resulting condensed system of equations enforces balance conditions for the numerical fluxes at the network nodes. Here, independent local solvers on the edges map nodal unknowns to numerical fluxes, and the condensed system determines the unknowns that ensure these fluxes satisfy the balance conditions.
The resulting semi-discrete system is then advanced in time using an energy-conservative implicit time integration scheme, such as the Crank--Nicolson method. Although this choice is non-standard for wave-type problems, it is motivated by the large disparity in fiber segment lengths: with length ratios spanning several orders of magnitude, classical explicit schemes become impractical because of the severe Courant--Friedrichs--Lewy (CFL) time step restriction.

As an alternative to HDG, beam finite elements can be employed for the discretization of Timoshenko beam networks; see~\cite{Kapur1966,Davis1972,Thomas1975,Lees1982} for representative examples from the extensive literature. If too few degrees of freedom are used, these methods can suffer from numerical issues such as shear locking; see, for instance, the theoretical study in~\cite{Mukherjee2001}. Furthermore, for the stationary problem, in the special case of constant material coefficients and homogeneous or constant loads, analytical ansatz functions can be constructed from exact solutions of the beam equations on each edge; see, for example, \cite{Reddy1997,Jeleni2009,KufnerLSSS18}.

In this paper, we present an a priori error analysis of the proposed HDG method for the elastic beam network wave propagation problem, establishing optimal-order convergence rates for both spatial and temporal discretization errors under mesh and time step refinement. To this end, we employ a projection-based error analysis for HDG methods, which goes back to~\cite{Cockburn2010}. The analysis combines techniques developed for HDG discretizations of wave equations~\cite{Cockburn2014} with arguments tailored to the Timoshenko beam network setting.
Due to the complex geometry of the spatial network and potentially highly varying material coefficients, the linear systems arising from the HDG method at each time step are typically ill-conditioned. Numerical experiments indicate that standard black-box preconditioners, such as algebraic multigrid methods (see, e.g., the review~\cite{XZ17}), may fail to significantly accelerate convergence. This is likely because such approaches do not adequately capture the geometric structure of the underlying spatial network.
Limitations are also encountered with existing HDG-specific preconditioners~\cite{CockburnDGT2013,FabienKMR19,Lu2021,LuRK22a,Lu2023,WidleyMB21}, which rely on coarsening strategies that would fundamentally alter the topology of the network graph, thereby rendering the associated injection operators inapplicable.
To overcome these challenges, we employ a preconditioner based on the observation that the network behaves essentially as a continuous object at sufficiently coarse scales. This perspective enables one to bridge across scales by introducing an artificial coarse mesh over the network. Classical finite element preconditioning techniques, such as a two-level overlapping Schwarz preconditioner, can then be employed; see~\cite{GoHeMa22,hauck2025,görtz2025} in the context of spatial network models. The convergence analysis of this two-level preconditioner relies on a spectral equivalence result between the global system matrix at each time step and suitably weighted graph operators. Under assumptions on the network homogeneity and connectivity on coarse scales that are reasonable in the present setting, quasi-interpolation bounds can be established. Moreover, the spectral equivalence result allows one to transfer classical analysis techniques for Schwarz methods from the continuous setting (cf.~\cite{ToW05}) to the network case, thereby proving uniform convergence of the corresponding preconditioned conjugate gradient method.

Other scale-bridging approaches have also been developed for spatial network models, in particular multiscale methods such as the (Super-)Localized Orthogonal Decomposition; see~\cite{LODOG,MaP14,HeP13,HaPe21b,pumslod} in the continuous setting and~\cite{EGHKM24,HMM23,HaM22} in the context of spatial network models. For these methods, quasi-interpolation operators or related quantities of interest are explicitly employed in the construction. Relying on a compression property, they allow for a significant reduction in the size of the resulting linear systems, thereby making them tractable again, for instance by classical sparse direct~solvers.

The rest of this paper is organized as follows: In \cref{sec:modelproblem}, we introduce a Timoshenko beam network model describing elastic wave propagation. \Cref{sec:hybrid_dual_mixed} presents a hybrid dual mixed formulation, and \cref{sec:semidisc} describes the spatial discretization using an HDG method, including an a priori error analysis for the semi-discrete problem. In \cref{sec:fullydisc}, we carry out a time discretization and conduct the corresponding fully discrete error analysis. An efficient preconditioner is presented in \cref{sec:precond}, and numerical experiments are reported in \cref{sec:numexp}. The paper concludes with a summary and outlook in \cref{sec:conclusion}.

\section{Timoshenko beam network model} \label{sec:modelproblem}
In this section, we introduce the model problem considered in this work: a Timoshenko beam network model describing elastic wave propagation in fiber-based materials. Such network models provide a powerful tool for representing fiber-based materials, such as paper (see \cref{fig:paper} for an illustration). The underlying Timoshenko beam model, originally introduced in~\cite{Timoshenko1921}, extends the simpler Euler--Bernoulli beam model by incorporating shear deformation. This makes it applicable not only to slender beams but also to short and thick beams, where cross-sectional rotation and shear effects play a significant role.

\begin{figure}
	\centering
	\includegraphics[width=0.5\linewidth]{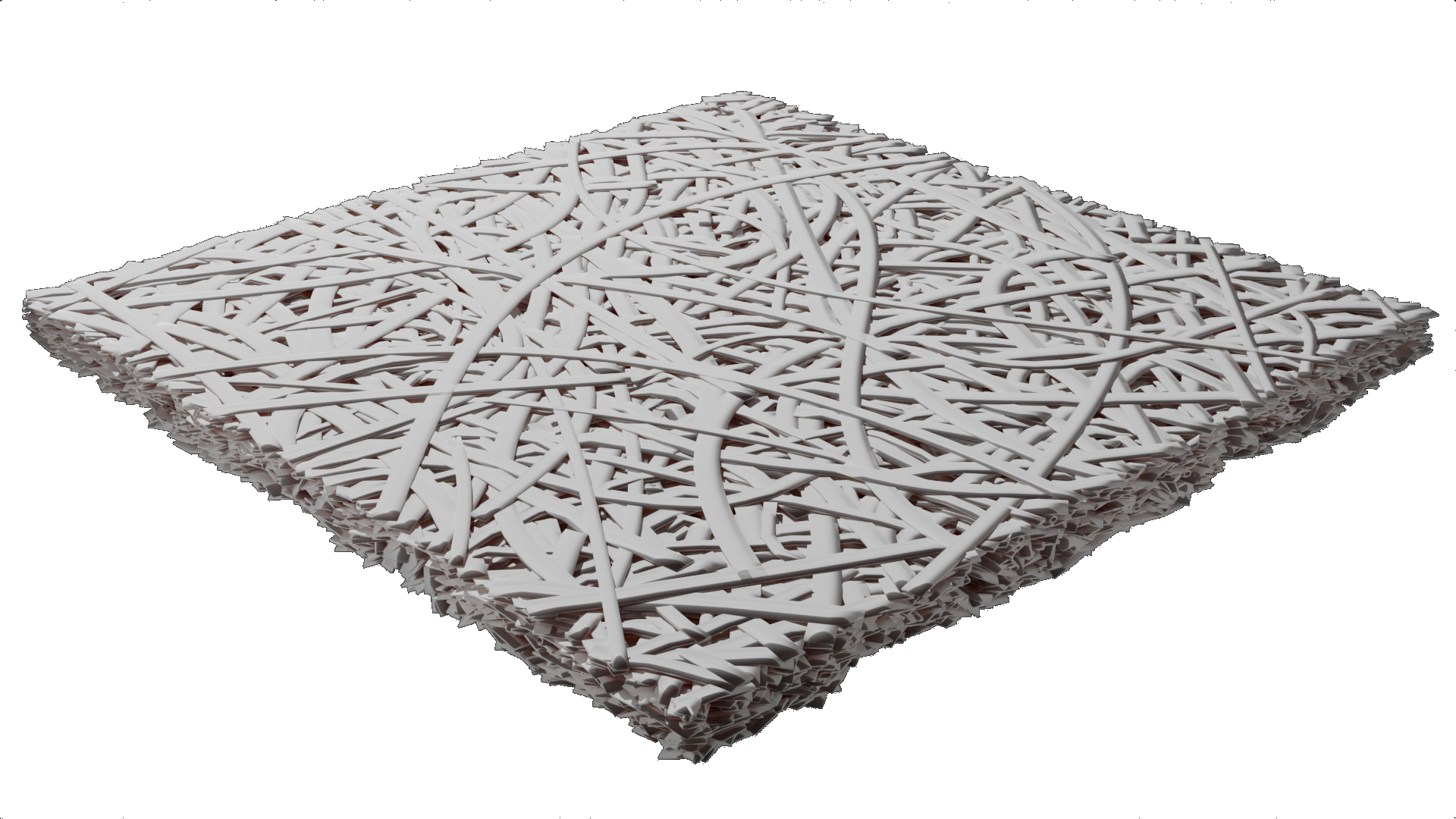}
	\caption{Illustration of a fiber network model of paper.}
	\label{fig:paper}
\end{figure}

\subsection{Geometry}
The underlying geometry is represented by a graph \(\graph=(\setNode,\setEdge)\) embedded in a bounded domain \(\Omega\subset\R^3\), where \(\setNode\) denotes the set of nodes and \(\setEdge\) denotes the set of initially straight, locally one-dimensional edges. The nodes represent fiber junctions, while the edges correspond to the fiber segments connecting pairs of nodes. We assume that the graph $\graph$ is connected, with the incidence set
\begin{equation*}
	\mathcal I \coloneqq \{ (\edge, \node) \in \setEdge \times \setNode \with \node \text{ is an endpoint of } \edge \}.
\end{equation*}
The network domain is denoted by
\begin{align*}
	\Sigma \coloneqq \bigcup_{\edge \in \setEdge} \bar{\edge},
\end{align*}
where $\bar{\cdot}$ denotes the closure of a set.
We describe the orientation of the beams by the edgewise constant function 
\(\vec{i}\colon\Sigma\to\mathbb{R}^3\), which assigns to each edge its unit tangent vector, oriented according to a fixed beam parametrization.
Subsequently, the canonical restrictions of a function $v$ and a space $V$ to an edge $\edge \in \setEdge$ are denoted by 
$v_\edge \coloneqq v|_\edge$ and $V_\edge \coloneqq V|_\edge$, respectively.

\subsection{Governing equations} \label{sec:gov_equations}
The governing equations of the network model are obtained by coupling the equations of motion on individual edges with suitable coupling conditions at the nodes. We consider the evolution of the system on the time interval $(0,T)$ with final time $T>0$.
On each edge, we employ the Timoshenko beam model~\cite{Timoshenko1921}, which is based on the kinematic assumptions that cross-sections remain rigid in their own plane and may rotate. In addition, the material is assumed to be linearly elastic, i.e., Hooke's law applies. 
The elastic and inertial properties of the network are characterized by the four coefficients 
\(\vec C_\mathrm{n}, \vec C_\mathrm{m}, \vec C_\mathrm{u}, \vec C_\mathrm{r} \colon \Sigma \to \mathbb{R}^{3\times 3}_\mathrm{sym}\),
which are assumed to be edgewise sufficiently smooth, uniformly bounded, and uniformly positive definite. More precisely,
 there exist constants $0 < c_{\bullet,\mathrm{min}} \leq c_{\bullet,\mathrm{max}} < \infty$ such that, for all $\vec \xi \in \mathbb{R}^3$ and almost every $\vec x \in \Sigma$,
\begin{align}\label{eq:boundCnm}
    c_{\bullet,\mathrm{min}}|\vec \xi|^2 &\leq \vec\xi^\top \! \vec C_\bullet(\vec{x}) \vec \xi \leq c_{\bullet,\mathrm{max}}|\vec \xi|^2,
\end{align}
for each $\bullet \in \{\mathrm{n,m,u,r}\}$, where $|\cdot|$ denotes the Euclidean norm.

The unknowns $\vu, \vr, \vn, \vm \colon (0,T) \times \Sigma \to \mathbb{R}^3$ of the beam network model
represent the displacement, rotation, force resultants, and moment resultants, respectively. On each edge $\edge\subset\Sigma$ and for almost every $t \in (0,T)$, they satisfy the equations
\begin{subequations}\label{eq:elasticwave}
   \begin{alignat}{2}
   - \vec C_\mathrm{n} ( \delx \vu + \vec{i} \times \vr) &= \vn,
   \qquad &
   - \vec C_\mathrm{m} \delx \vr &= \vm, \\
   \vec C_\mathrm{u} \del_{tt}\vu + \delx \vn &= \vf_\mathrm{n},
   \qquad &
   \vec C_\mathrm{r} \del_{tt}\vr + \delx \vm + \vec{i} \times \vn &= \vf_\mathrm{m};
\end{alignat}
see, e.g.,~\cite{Dym2013}. 
Here, $\vf_\mathrm{n}, \vf_\mathrm{m} \colon (0,T) \times \Sigma \to \mathbb{R}^3$ are given distributed forces and moments,
$\times$ denotes the standard cross product in $\mathbb{R}^3$,
$\delx$ the derivative with respect to the
parameter $x$ varying along the edge $\edge\subset\Sigma$ with unit speed, 
and $\del_t$ the partial derivative in time.
We impose the following initial conditions:
\begin{align}
\label{ic:weakwave}
    \vu(0,\cdot) &= \vu_0, & \vr(0,\cdot) &= \vr_0, &
    \del_t \vu(0,\cdot) &= \vec v_0, & \del_t \vr(0,\cdot) &= \vec s_0,
\end{align}
for given functions $\vu_0, \vr_0, \vec v_0, \vec s_0 \colon \Sigma \to \R^3$.

To couple the Timoshenko beam equations on the individual edges, we partition the set of nodes into a non-empty set of Dirichlet nodes $\setNodeDir\subset\setNode$ and the remaining nodes. At Dirichlet nodes, prescribed boundary conditions are imposed, while at the remaining nodes, continuity and balance conditions are enforced.
The \emph{continuity conditions} require the displacements and rotations at each node to be identical on all incident edges. Specifically, for each node $\node \in \setNode \setminus \setNodeDir$ and incident edges $\edge,\edge'\subset\Sigma$, we impose, for almost all $t \in (0,T)$, the conditions
\begin{align}\label{EQ:timo_cont_hybrid}
 \vue (t,\node) = \vu_{\edge^\prime}(t,\node), \qquad 
 \vre (t, \node) = \vr_{\edge^\prime}(t, \node).
\end{align}

The \emph{balance conditions} enforce equilibrium of forces and moments at non-Dirichlet nodes. To state them, we introduce the sign function $\nu \colon \mathcal I \to \{+1,-1\}$ defined by
\[
\nu(\edge,\node)=
\begin{cases}
	+1, & \text{if the tangent vector $\vec{i}_\edge$ points towards $\node$},\\
	-1, & \text{otherwise}.
\end{cases}
\]
We use the notation $\nu_\edge(\node)\coloneqq\nu(\edge,\node)$.
Moreover, for a $\nu$-weighted quantity defined on the edges, the jump operator $\jump{\cdot}(\node)$
denotes the sum of its traces at $\node$, that is,
\begin{align*}
    \jump{\vn \, \nu}(\node) = \sum_{\edge \colon (\edge,\node) \in \mathcal I} \vn_\edge(\node) \nu_\edge(\node).
\end{align*}
Then, for every $\node \in \setNode \setminus \setNodeDir$, the balance conditions read, for almost all $t\in(0,T)$,
\begin{align}\label{eq:balance}
   - \jump{ \vn \, \nu}(t,\node) = \vec 0, \qquad
  - \jump{\vm \, \nu}(t,\node) = \vec 0.
\end{align}
Together, the coupling conditions \cref{EQ:timo_cont_hybrid,eq:balance} enforce rigid connections between the fibers, which is why they are also referred to as rigid joint conditions.

The system is closed by \textit{Dirichlet boundary conditions}. Specifically, for each $\node\in\setNodeDir$ and almost every $t\in(0,T)$, we impose
\begin{equation}\label{eq:timo_dir}
	\vu(t,\node) = \uD(t,\node), \qquad \vr(t,\node) = \rD(t,\node),
\end{equation}
where $\uD,\rD\colon (0,T) \times \setNodeDir \to \mathbb{R}^3$ denote the prescribed Dirichlet data, which we identify with their zero extensions to $\setNode\setminus\setNodeDir$.
\end{subequations}

\section{Hybrid dual mixed formulation} \label{sec:hybrid_dual_mixed}
In this section, we introduce a hybrid dual mixed weak formulation that serves as the basis for the spatial HDG discretization developed in \cref{sec:semidisc}.
For notational convenience, pairs of functions in $\Sigma \to \mathbb{R}^3$
are identified with functions $\Sigma\to\mathbb{R}^6$.
Specifically, we introduce the following vector notation:
\begin{align*}
	\vec q = (\vn,\vm),\qquad 
	\vec y = (\vu,\vr),\qquad \vf \coloneqq (\vf_\mathrm{n},\vf_\mathrm{m}),
\end{align*}
where $\vec q$, $\vec y$, and $\vf$ encode the force and moment resultants, the displacement and rotation fields, and the external loads, respectively. Similar notation is applied for the boundary and initial data, denoting $\lambdaD \coloneqq (\uD,\rD)$, $\vec y_0 \coloneqq (\vu_0,\vr_0)$, and $\vec y_1 \coloneqq (\vec v_0, \vec s_0)$. Furthermore, we introduce the block matrices 
\begin{align}\label{eq:itimes}
	\vec C_\mathrm{q} \coloneqq \operatorname{diag}(\vec C_\mathrm{n},\vec C_\mathrm{m}),
	\qquad
	\vec C_\mathrm{y} \coloneqq \operatorname{diag}(\vec C_\mathrm{u},\vec C_\mathrm{r}).
\end{align}

\subsection{Function spaces and norms}
On each edge $\edge\subset\mathbb{R}^3$, we use the standard Lebesgue and Sobolev spaces
$\Ltwoe\coloneqq L^2(\edge;\mathbb{R}^6)$ and
$\mathbf{H}^p(\edge)\coloneqq H^p(\edge;\mathbb{R}^6)$, $p\in\mathbb{N}_{\geq1}$,
where all integrals are taken with respect to the one-dimensional Hausdorff measure.
Integrals over the network domain $\Sigma$ are understood as the sum of the corresponding edge integrals.
Accordingly, we define the global space $\Ltwo\coloneqq L^2(\Sigma;\mathbb{R}^6)$.
We denote by $(\cdot,\cdot)_\edge$ and $\|\cdot\|_\edge$
the $\Ltwoe$ inner product and induced norm, respectively, and use analogous notation on $\Sigma$.
The edgewise Sobolev space is defined by
\begin{align*}
	\mathbf{H}^p_\mathrm{pw}(\Sigma)
	&\coloneqq
	\bigl\{\vec\psi \in \Ltwo \with
	\vec\psi_\edge \in \mathbf{H}^p(\edge)
	\text{ for all } \edge\in\setEdge \bigr\}.
\end{align*}
A globally continuous version of this edgewise Sobolev space is defined by
\begin{align*}
	\mathbf{H}^p(\Sigma)
	&\coloneqq \mathbf{H}^p_\mathrm{pw}(\Sigma)
	\cap C^0(\Sigma;\mathbb{R}^6).
\end{align*}
For $p \geq 1$, we denote by $\|\cdot\|_{\edge,p}$ the $\mathbf{H}^p$ norm on $\edge$
and by $\|\cdot\|_{\Sigma,p}$ the corresponding piecewise $H^p$-norm on $\Sigma$.
Note that, since $\mathbf{H}^1(\edge)$ embeds continuously into
$C^0(\overline{\edge};\mathbb{R}^6)$ in one dimension, nodal traces are well defined.

We further define the nodal spaces
\begin{align*}
	\vLambda&\coloneqq\{\vec\lambda \;\colon\; \setNode\to\R^6
	\;\mid\; \vec\lambda=0 \text{ on } \setNodeDir\},\\
	\vec\Lambda_\mathrm{D}
	&\coloneqq
	\left\{    \vec\lambda \;\colon\; \setNode\to\R^6\;\mid\;
	\vec\lambda=0 \text{ on } \setNode\setminus\setNodeD
	\right\}.
\end{align*}
Here, $\vLambda$ consists of nodal functions satisfying homogeneous Dirichlet boundary conditions on $\setNodeDir$, while $\vec\Lambda_\mathrm{D}$ represents the space for Dirichlet data supported on~$\setNodeD$.

For vector-valued quantities $\vec\psi,\vec\phi \colon \Sigma \to \mathbb{R}^6$ with well-defined nodal traces, we introduce the following discrete inner products:
\begin{align*}
\langle \vec \psi, \vec \phi \rangle_\edge
&\coloneqq  \sum_{\node:\,(\edge,\node)\in\mathcal I} \vec\psi_\edge(\node)\cdot\vec\phi_\edge(\node),\\ \langle \vec \psi, \vec \phi \rangle_\setEdge &\coloneqq \sum_{\edge\in\setEdge}\langle\vec\psi,\vec\phi\rangle_\edge,\\
\langle\vec\psi,\vec\phi\rangle_\setNode  &\coloneqq \sum_{\node\in\setNode}\vec\psi(\node)\cdot\vec\phi(\node).
\end{align*}
The first two bilinear forms act on edgewise functions through their nodal traces, whereas the third one is defined for functions with unique nodal values. Note that in $\langle\cdot,\cdot\rangle_{\setEdge}$ each nodal contribution is counted with multiplicity equal to the degree of the corresponding node.
If $\vec \phi$ is single-valued at the nodes, the edge-based scalar product weighted by the orientation factor $\nu$ can be interpreted as the nodal inner product of the corresponding jump with the test function, i.e.,
\begin{align}
    \langle \vec \psi \, \nu, \vec\phi\rangle_\setEdge
    = \sum_{\edge\in\setEdge} \,\sum_{\node:\,(\edge,\node)\in\mathcal I} \vec\psi_\edge(\node) \, \nu_\edge(\node)\cdot\vec\phi_\edge(\node)
    = \langle \jump{\vec \psi \, \nu}, \vec\phi\rangle_\setNode. \label{eq:EtoN}
\end{align}

\subsection{Hybrid dual mixed formulation}
Given the initial data $\vec{y}_0, \vec{y}_1 \in \mathbf{H}^1(\Sigma)$ (cf.~\cref{ic:weakwave}), the force term $\vf \in H^1(0,T;\mathbf{L}^2(\Sigma))$, and the boundary data $\lambdaD \in H^2(0,T;\vec{\Lambda}_\mathrm{D})$ (cf.~\cref{eq:timo_dir}), we seek the dual, primal, and hybrid unknowns
\begin{align*}\vec q &\in L^2(0,T;\Hpw), \qquad
	\vec y \in H^2(0,T;\Ltwo), \qquad
	\vlh \in L^2(0,T;\vLambda),
\end{align*}
where $\vlD \coloneqq \vlh + \lambdaD$, satisfying the initial conditions
\begin{equation*}
	\vy(0) = \vy_0, \qquad \del_t \vy(0) = \vy_1,
\end{equation*}
such that, almost everywhere in $(0,T)$, the saddle-point system
\begin{subequations} \label{eq:saddle}
	\begin{alignat}{4}
		&\phantom{{}-{}} a(\vec q,\wqtest) &&+ b(\wqtest,\vec y)
		&&= -\langle \vec\lambda_\Nodes, \wqtest\nu \rangle_\setEdge,
		\label{eq:saddle-1}
		\\
		&- b(\vec q,\wytest) &&+ c(\del_{tt}\vec y,\wytest)
		&&= (\vf,\wytest)_\Sigma,
		\label{eq:saddle-2}
		\\
		& &&-\langle \vq \, \nu, \vec\mu\rangle_\setEdge &&= 0
		\label{eq:saddle-3}
	\end{alignat}
\end{subequations}
holds for all test functions $(\wqtest, \wytest, \vec\mu) \in \Hpw \times \Ltwo \times \vLambda$.
Here, \cref{eq:saddle-1} enforces the continuity conditions \eqref{EQ:timo_cont_hybrid} together with the boundary conditions \cref{eq:timo_dir}, whereas \cref{eq:saddle-3} enforces the balance conditions \eqref{eq:balance}; see also the identity \cref{eq:EtoN}.
The bilinear forms are defined as
\begin{alignat*}{5}
&a \colon \mathbf{H}^1_\mathrm{pw}(\Sigma)
&&\times \mathbf{H}^1_\mathrm{pw}(\Sigma)   &&\to \R,
& \qquad a(\vq,\vec{w}_\mathrm{q}) &\coloneqq (\vec C_\mathrm{q}^{-1} \vq, \vec w_\mathrm{q})_\Sigma
\\
& b \colon \mathbf{H}^1_\mathrm{pw}(\Sigma)
&&\times \mathbf{L}^2(\Sigma)               &&\to \R,
& b(\vq,\vy) &\coloneqq
    - (\del_x \vq + \vix\vq, \vy)_\Sigma
\\
&c \colon  \mathbf{L}^2(\Sigma)
&&\times \mathbf{L}^2(\Sigma)               &&\to \R,
& c(\vy,\vec{w}_\mathrm{y}) &\coloneqq
  (\vec{C}_\mathrm{y} \vy,\vec{w}_\mathrm{y})_\Sigma,
\end{alignat*}
where $\vix$ denotes the block matrix defined for all $\vec q = ( \vec n,\vec m)$ by $\vix\vec q \coloneqq (\vec 0,\, \vec i\times\vn)$, whose block transpose acts as $\vix^\top\vec y = (-\vec i\times\vr,\, \vec 0)$.
\begin{remark}[Spatial regularity of primal solution]\label{rem:add_reg}
Equation \eqref{eq:saddle-1} implies that, for almost every $t \in (0,T)$, the unknown $\vec y$ admits the spatial regularity $\vec y(t) \in \Hpw$ and that its nodal traces coincide with $\vlD(t)$. Since the latter is single-valued, in fact $\vec y(t) \in \Hone$.
\end{remark}

For the well-posedness of \cref{eq:saddle}, compatibility conditions must be imposed between the initial data
and the Dirichlet boundary data.
We define the initial dual data induced
by the constitutive relation \cref{eq:saddle-1} as 
\begin{align}
	\label{eq:comp_cond}
	\vec q_{m} &\coloneqq -\vec C_\mathrm{q}\bigl(\delx\vec y_{m} - \vix^\top\vec y_{m}\bigr)
    \qquad\text{for}~m\in\{0,1\}.
\end{align}
For $m\in\{0,1\}$, the \emph{compatibility conditions} require
\begin{subequations}\label{eq:comp_order}
\begin{alignat}{2}
	\del_t^m\lambdaD(0) &= \vec y_{m} & &\qquad\text{on}~\setNodeDir, \\
	\langle \vec q_{m}\,\nu,\vec\mu\rangle_\setEdge &= 0 & &\qquad\forall\vec\mu\in\vLambda,
\end{alignat}
\end{subequations}
where the last condition implicitly requires $\vec q_{m} \in \Hpw$.

The algebraic constraints linking the dual and hybrid variables preclude standard well-posedness results for the hybrid dual mixed formulation \cref{eq:saddle}. Consequently, we state the following well-posedness result.
\begin{lemma}[Continuous well-posedness]\label{lem:hdm_well}
	Assume that $\vf \in H^{1}(0,T;\Ltwo)$ and $\lambdaD \in H^{2}(0,T;\vec\Lambda_\mathrm{D})$, and that the compatibility conditions \cref{eq:comp_order} hold.
	Then, the hybrid dual mixed formulation \cref{eq:saddle} admits a unique solution satisfying
	\begin{align*}
		\vec q \in L^\infty(0,T;\Hpw),
		\qquad
		\vec y \in W^{2,\infty}(0,T;\Ltwo),
		\qquad
		\vlh \in W^{1,\infty}(0,T;\vLambda),
	\end{align*}
	with initial values $\vy(0)=\vy_0$ and $\del_t\vy(0) = \vy_{1}$.
\end{lemma}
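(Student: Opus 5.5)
We prove \cref{lem:hdm_well} by reducing the hybrid dual mixed system \cref{eq:saddle} to a primal second-order evolution problem on $\Hone$ and invoking standard abstract theory for hyperbolic equations (Lions--Magenes / Galerkin with energy estimates). First, we lift the Dirichlet data. Since $\lambdaD(t)$ is supported on finitely many Dirichlet nodes, we fix a linear extension $E\colon\vec\Lambda_\mathrm{D}\to\Hone$ (e.g.\ edgewise affine hat functions, or smooth bumps near each Dirichlet node) and write $\vy=\vec y_\mathrm{D}+\vec z$ with $\vec y_\mathrm{D}\coloneqq E\lambdaD\in H^2(0,T;\Hone)$. Next, we take the space $V\coloneqq\{\vec z\in\Hone\with \vec z=0 \text{ on } \setNodeD\}$ and define the energy form
\begin{align*}
  \mathcal A(\vec z,\vec w)\coloneqq\bigl(\vec C_\mathrm{q}(\delx\vec z-\vix^\top\vec z),\delx\vec w-\vix^\top\vec w\bigr)_\Sigma .
\end{align*}
We then formally eliminate $\vq$ via \cref{eq:saddle-1} by testing it with smooth $\wqtest$: edgewise integration by parts gives $\vq=-\vec C_\mathrm{q}(\delx\vy-\vix^\top\vy)$ with traces of $\vy$ equal to $\vlD$. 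Inserting this into \cref{eq:saddle-2} and using \cref{eq:saddle-3} together with \cref{eq:EtoN} leads to the variational problem
\begin{align*}
  c(\del_{tt}\vec z,\vec w)+\mathcal A(\vec z,\vec w)=(\vf,\vec w)_\Sigma-c(\del_{tt}\vec y_\mathrm D,\vec w)-\mathcal A(\vec y_\mathrm D,\vec w)\qquad\forall\vec w\in V .
\end{align*}
Conversely, a solution of this problem with enough regularity yields a solution of \cref{eq:saddle} upon setting $\vq$ as above and $\vlh\coloneqq\vy|_\setNode-\lambdaD$.

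The key structural step is showing that $\mathcal A$ is coercive on $V$ relative to $\Ltwo$, i.e.\ that it satisfies a G\aa rding inequality $\mathcal A(\vec z,\vec z)+\kappa\|\vec z\|^2_\Sigma\ge\alpha\|\vec z\|_{\Sigma,1}^2$. This follows from \cref{eq:boundCnm} and Young's inequality, since $\vix^\top$ is bounded, and it suffices for the abstract theory. Continuity of $\mathcal A$ is clear. Hence, by the standard theorem for second-order hyperbolic equations with the Hilbert triple $V\subset\Ltwo\subset V'$, where the $\Ltwo$ inner product is weighted by $\Cy$ (and is uniformly equivalent to the standard one), there is a unique weak solution $\vec z$. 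To obtain the stated regularity, we differentiate in time: the right-hand side lies in $H^1(0,T;\Ltwo)$ because $\vf\in H^1$ and $\lambdaD\in H^2$ (the term $\del_{tt}\vec y_\mathrm D$ needs $\lambdaD\in H^2$ only in $L^2$ sense; if one more derivative is needed we instead use a lifting that is static in space and note the finite-dimensionality). Applying the energy estimate to $\del_t\vec z$ gives $\del_t\vec z\in L^\infty(0,T;V)$ and $\del_{tt}\vec z\in L^\infty(0,T;\Ltwo)$. The initial data for $\del_t\vec z$ are $\vy_1-E\del_t\lambdaD(0)\in V$ and $\del_{tt}\vec z(0)\in\Ltwo$, where the latter is well defined precisely because $\vec q_0\in\Hpw$ with balanced jumps by \cref{eq:comp_order}, so that $\mathcal A(\vy_0,\cdot)$ extends to an $\Ltwo$-functional; this is where the compatibility conditions enter.

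We expect the main obstacle to be recovering $\vq(t)\in\Hpw$ uniformly in $t$, together with the balance conditions. To handle it, we read the equation edgewise: testing with $\vec w$ compactly supported in an edge gives $\delx\vq=\vf-\Cy\del_{tt}\vy-\vix\vq\in L^\infty(0,T;\Ltwo)$ distributionally, so $\vq\in L^\infty(0,T;\Hpw)$. Testing then with general $\vec w\in V$ and integrating back by parts leaves exactly the nodal terms $\langle\jump{\vq\nu},\vec w\rangle_\setNode=0$, which is \cref{eq:saddle-3}. Finally, $\vlh=\vy|_\setNode-\lambdaD$ lies in $W^{1,\infty}(0,T;\vLambda)$ by the trace embedding $\mathbf H^1(\edge)\hookrightarrow C^0$ applied to $\del_t\vy\in L^\infty(0,T;\Hone)$. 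Uniqueness follows by applying the energy identity to the difference of two solutions, after testing \cref{eq:saddle} with $(\vq,\del_t\vy,\del_t\vlh)$ suitably regularized, or directly from uniqueness of the primal problem via the equivalence established above.
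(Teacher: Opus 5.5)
Your proposal takes a different route from the paper: a primal reduction plus Lions--Magenes, where the paper uses a Crank--Nicolson Rothe scheme on the mixed system. Most of it is sound. The reduction to the primal problem on $V$, the G\aa rding inequality, the edgewise recovery of $\vq$ and of the balance conditions, and the uniqueness argument all work.

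\textbf{The gap: the regularity step.} It is essential, and as written it fails. With the lifting $\vec y_\mathrm D=E\lambdaD$, the right-hand side of the $\vec z$-problem contains $-\Cy E\,\del_{tt}\lambdaD$. For $\lambdaD\in H^2(0,T;\vec\Lambda_\mathrm D)$ this term lies only in $L^2(0,T;\Ltwo)$, not in $H^1(0,T;\Ltwo)$ as you claim (and half-acknowledge). Lions--Magenes then gives only a finite-energy solution $\vec z\in C([0,T];V)\cap C^1([0,T];\Ltwo)$. That is not yet a solution of \cref{eq:saddle} in the required class, which needs $\del_{tt}\vy\in L^2(0,T;\Ltwo)$ and $\vq(t)\in\Hpw$.

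Differentiating in time needs $E\,\del_t^3\lambdaD$. Likewise, $\del_{tt}\vec z(0)$ involves $\del_{tt}\lambdaD(0)$. Neither exists for $H^2$ data. Your suggested fix does not help: a spatially static lifting and the finite-dimensionality of $\vec\Lambda_\mathrm D$ do nothing about a missing \emph{time} derivative. Put differently, $\del_t\vy$ solves the same problem with Dirichlet data $\del_t\lambdaD\in H^1(0,T)$. Any lifting-based energy estimate needs Dirichlet data one time derivative smoother than the regularity it produces. So your argument proves the lemma only for $\lambdaD\in H^3(0,T;\vec\Lambda_\mathrm D)$.

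A smaller point: edgewise affine hats are not admissible liftings if you want an $\Ltwo$ right-hand side. With them, $\mathcal A(E\lambdaD,\cdot)$ contains point evaluations at the free neighbours of Dirichlet nodes. Only liftings supported inside the edges incident to $\setNodeD$ avoid this.

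\textbf{The missing ingredient.} The hypothesis $\lambdaD\in H^2$ is sharp. For the scalar 1D wave equation, $u(t,x)=g(t-x)$ shows that $g\in H^2(0,T)$ is exactly what gives $\del_{tt}u\in L^\infty(0,T;L^2)$. So what you need is not a better lifting but a hidden-regularity estimate for the nodal traces of the flux.

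Work with the unlifted energy identity $\tfrac{d}{dt}\mathfrak E=(\vf,\del_t\vy)_\Sigma-\langle\del_t\vlD,\vq\nu\rangle_\setEdge$. By \cref{eq:saddle-3}, only Dirichlet nodes contribute to the last term. Then bound $\int_0^T|\vq(t,\node)|^2\,\mathrm dt$ by the maximal energy and the data, using a multiplier argument with an edgewise affine multiplier (Lasiecka--Lions--Triggiani).

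Apply this to the time-differentiated problem. The boundary term $\langle\del_{tt}\lambdaD,\del_t\vq\,\nu\rangle_\setEdge$ is then controlled using only $\del_{tt}\lambdaD\in L^2(0,T)$. This gives $\del_{tt}\vy,\del_t\vq\in L^\infty(0,T;\Ltwo)$. After that, your recovery of $\vq\in L^\infty(0,T;\Hpw)$ and of $\vlh\in W^{1,\infty}(0,T;\vLambda)$ goes through.

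The identity and the trace estimate must be derived on approximations. This is what the paper does in \cref{sec:wellposed}. It works with a Crank--Nicolson discretization of the mixed system, where fluxes and nodal traces are directly available and the energy identity is exact. It then applies the trace estimate to the scheme and its difference quotients, and passes to the limit.
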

\begin{proof}
For completeness, a proof sketch is provided in \cref{sec:wellposed}.
\end{proof}

\subsection{Energy conservation}
For sufficiently smooth solutions we define the energy
\begin{align}
\label{eq:energycont}
\mathfrak E[\vq, \vy](t) \coloneqq
\tfrac12 a(\vec q,\vec q)(t) + \tfrac12 c(\del_t\vec y,\del_t\vec y)(t).
\end{align}
Differentiating \cref{eq:saddle-1} with respect to time, testing the result with $\vec q$, testing \cref{eq:saddle-2} with $\del_t \vec y$, and summing the two equations yields
\begin{align*}
	\frac{d}{dt}\mathfrak E[\vq, \vy](t)
	= (\vec f,\del_t\vec y)_\Sigma
	- \langle \del_t\vec\lambda_\Nodes,
	\vec q\nu\rangle_{\setEdge}.
\end{align*}
In particular, for zero external forces ($\vf = \vec 0$) and time-independent Dirichlet data ($\del_t\lambdaD = \vec 0$), applying the balance conditions \cref{eq:saddle-3} yields global energy conservation:
\begin{align}
	\label{eq:energypreservationcont}
	\frac{d}{dt}\mathfrak E[\vq, \vy](t)=0,
\end{align}
as physically expected from the wave-like character of the underlying system.

\section{Semi-discrete problem}\label{sec:semidisc}

In this section, we derive a semi-discrete scheme by discretizing the hybrid dual mixed formulation \cref{eq:saddle} in space while keeping time continuous. We establish its well-posedness and energy conservation, and derive spatial error estimates.

\subsection{HDG discretization}
A natural choice of discretization in the present network setting is the HDG method.
Let $\poly[x]$ denote the space of univariate polynomials of degree at most $p$ in the arclength parameter $x$.
Given an edge $\edge \in \setEdge$ and any unit-speed parametrization $\gamma_\edge \colon [0, \operatorname{len}(\edge)]\to\edge$,
we define the local vector-valued polynomial space as
\begin{align*}
	\spoly_\edge \coloneqq \left\{ \vec w \colon \edge \to \mathbb{R}^6 \with w_i(\gamma_\edge(x)) \in \poly[x] \text{ for } i=1,\dots,6 \right\}.
\end{align*}
The global polynomial approximation space $\spoly \subset \{\Sigma \to \mathbb{R}^6\}$ is then given by the broken polynomial space over the network domain,
\begin{align*}
	\spoly \coloneqq \left\{ \vec w \colon \Sigma \to \mathbb{R}^6 \with \vec w|_\edge \in \spoly_\edge \text{ for all } \edge \in \setEdge \right\}.
\end{align*}
In addition, the discretization involves a stabilization parameter, which we take as a strictly positive, edgewise constant function $\tau\in L^\infty(\Sigma)$.
Here and in what follows, a bar accent $\bar{(\cdot)}$ denotes discrete quantities.

Given suitable discrete approximations $\bvec y_0, \bvec y_1 \in \spoly$ of the initial data $\vec y_0, \vec y_1$, we seek the discrete dual, primal, and hybrid unknowns
\begin{align*}\vqd \in L^2(0,T;\spoly), \qquad
	\vyd \in H^2(0,T;\spoly), \qquad
	\vldh \in L^2(0,T;\vLambda),
\end{align*}
where $\vldD \coloneqq \vldh+\lambdaD$.
The discrete unknowns are required to satisfy the initial conditions
\begin{equation*}
	\vyd(0) = \bvec y_0, \qquad \del_t \vyd(0) = \bvec y_1,
\end{equation*}
as well as the semi-discrete saddle-point system
\begin{subequations} \label{eq:sd_saddle}
	\begin{alignat}{4}
		&\phantom{{}-{}} a(\vqd,\wqtestd) &&+ b(\wqtestd, \vyd) && \label{eq:sd_saddle1}
		&&= -\langle \vldD,
		\wqtestd\nu \rangle_\setEdge, \\
		&-b(\vqd, \wytestd) &&+ c(\del_{tt}\vyd, \wytestd)
		&&+\langle\tau(\vyd-\vldD), \wytestd\rangle_\setEdge
		&&= (\vf, \wytestd)_\Sigma, \label{eq:sd_saddle2} \\
		& & & & & &\mathllap{-\langle \vqd\,\nu + \tau(\vyd-\vldD), \bvec\mu \rangle_\setEdge}
		&= 0 \label{eq:sd_saddle3}
	\end{alignat}
\end{subequations}
for all test functions $\wqtestd,\wytestd \in \spoly$ and $\bvec\mu\in\vLambda$, and almost every $t\in(0,T)$.
Here, \cref{eq:sd_saddle1} enforces the discrete continuity and boundary conditions, whereas \cref{eq:sd_saddle3} enforces the nodal balance conditions \eqref{eq:balance} in terms of the HDG numerical flux
\begin{align}\label{eq:numflux}
	\vqd \nu + \tau (\vyd - \bar{\vec \lambda}_\Nodes).
\end{align}
The stabilization parameter $\tau$ penalizes the jump between the primal trace $\vyd$ and the hybrid unknown $\bar{\vec \lambda}$. By the identity \cref{eq:EtoN}, \cref{eq:sd_saddle3} is equivalent to the balance conditions \cref{eq:balance} evaluated with the numerical flux \cref{eq:numflux}.

\begin{remark}[Role of stabilization]
Throughout, we assume that the stabilization parameter satisfies $\tau>0$.
Its strict positivity is used in the semi-discrete well-posedness analysis and provides the trace control required for the error analysis in \cref{sec:semidisc_error}.
The case $\tau=0$ is not considered.
\end{remark}

The semi-discrete counterpart of \cref{lem:hdm_well} reads as follows.
It provides one additional order of temporal regularity and requires no compatibility conditions on the data,
and its proof is considerably more direct:
after the spatial discretization, the algebraic constraints can be eliminated explicitly,
which reduces the problem to a system of ordinary differential equations.
\begin{lemma}[Semi-discrete well-posedness]\label{lem:sd_well}
Assume that $\vf \in H^{1}(0,T;\Ltwo)$ and that $\lambdaD \in H^{2}(0,T;\vec\Lambda_\mathrm{D})$.
Then the semi-discrete problem \cref{eq:sd_saddle}, together with the initial and boundary conditions, admits a unique solution, which satisfies
\begin{align*}
\vqd \in H^{2}(0,T;\spoly),
\qquad
\vyd \in H^{3}(0,T;\spoly),
\qquad
\vldh \in H^{2}(0,T;\vLambda).
\end{align*}
\end{lemma}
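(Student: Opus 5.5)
The plan is to eliminate the algebraic unknowns $\vqd$ and $\vldh$ and reduce \cref{eq:sd_saddle} to a linear second-order ODE system in $\vyd$ with constant, symmetric positive definite mass matrix and symmetric positive semidefinite stiffness matrix. Standard ODE theory then gives existence, uniqueness and regularity.

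\emph{Step 1 (local solve for $\vqd$).} For fixed $\vyd\in\spoly$ and $\vldD$, equation \cref{eq:sd_saddle1} determines $\vqd$ uniquely, since $a$ is coercive on $\spoly$ by \cref{eq:boundCnm}. Thus $\vqd = Q\vyd + R\vldD$ for linear maps $Q\colon\spoly\to\spoly$ and $R\colon\vLambda\oplus\vec\Lambda_\mathrm{D}\to\spoly$, computed edge by edge.

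\emph{Step 2 (elimination of $\vldh$).} Insert this into \cref{eq:sd_saddle3}. The result is a linear system $S\vldh = G\vyd + H\lambdaD$ on the finite-dimensional space $\vLambda$. To show $S$ is invertible, take $\vyd=0$, $\lambdaD=0$ and test \cref{eq:sd_saddle1} with $\wqtestd=\vqd$ and \cref{eq:sd_saddle3} with $\bvec\mu=\vldh$; subtracting gives
\begin{equation*}
a(\vqd,\vqd)+\langle\tau\vldh,\vldh\rangle_\setEdge=0 .
\end{equation*}
Because $\tau>0$ and every node in $\setNode\setminus\setNodeDir$ has at least one incident edge, this forces $\vldh=0$. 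This is the step where $\tau>0$ is essential.

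\emph{Step 3 (reduced ODE).} Substituting $\vqd$ and $\vldh$ into \cref{eq:sd_saddle2} yields
\begin{equation*}
M\,\del_{tt}\vyd + K\vyd = F(t) + L\lambdaD(t),
\end{equation*}
where $M$ is the matrix of $c$ on $\spoly$, which is symmetric positive definite. Symmetry and semidefiniteness of $K$ are not actually needed for existence; only linearity with constant coefficients is used. Symmetry of $K$ is expected from the energy structure and can be checked by the same testing argument as in Step 2.

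The right-hand side is the $L^2$-projection of $\vf$, which lies in $H^1(0,T)$, plus a linear image of $\lambdaD\in H^2(0,T)$. Hence it lies in $H^1(0,T;\spoly)$. Carathéodory theory for linear ODEs gives a unique $\vyd\in H^3(0,T;\spoly)$: from $\del_{tt}\vyd = M^{-1}(\text{rhs}-K\vyd)$ we get $\del_{tt}\vyd\in H^1$. Bootstrapping through Steps 1–2, $\vldh = S^{-1}(G\vyd+H\lambdaD)\in H^2$ because $\vyd\in H^3$ and $\lambdaD\in H^2$, and then $\vqd\in H^2$ as well.

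\emph{Uniqueness and the main obstacle.} Uniqueness of the full triple follows from uniqueness of the ODE solution together with the injectivity established in Step 2. The one genuinely nontrivial point is the invertibility of the condensed nodal operator $S$, i.e. the energy argument of Step 2. Everything else is bookkeeping of time regularity.
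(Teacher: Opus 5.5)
Your proposal is correct and follows essentially the same route as the paper: you eliminate $\vqd$ edgewise via the coercivity of $a$, then condense to a nodal system for $\vldh$ whose solvability rests on the identity $a(\vqd,\vqd)+\langle\tau\vldh,\vldh\rangle_\setEdge=0$ with $\tau>0$, and finally apply linear ODE theory with the positive definite mass form $c$ and bootstrap the time regularity. The only difference is cosmetic: you prove injectivity of the square condensed system, whereas the paper shows it is symmetric positive definite, and both rest on the same testing argument.
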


\begin{proof}
The bilinear form $a$ defines an inner product on $\Ltwo$ and, consequently, on the finite-dimensional subspace $\spoly$. 
By the Riesz representation theorem, equation \eqref{eq:sd_saddle1} uniquely determines the dual variable $\vqd = \bvec R(\vyd,\vldD)$ for given $\vyd$ and $\vldD$ through a linear solution operator, which we denote by $\bvec R$.
Inserting $\vqd$ into \eqref{eq:sd_saddle3} yields a linear system for $\vldh$ with the bilinear form
\begin{align*}
  B(\bvec\lambda,\bvec\mu) \coloneqq -\langle \bvec R(\vec 0,\bvec\lambda)\,\nu - \tau\bvec\lambda, \bvec\mu\rangle_\setEdge,
  \qquad \bvec\lambda,\bvec\mu\in\vLambda.
\end{align*}
This system is well-posed, as $B$ is symmetric positive definite.
Indeed, given $\bvec y = 0$ and $\bvec\lambda_\Nodes=\bvec\lambda$,
testing \eqref{eq:sd_saddle1} with $\bvec R(\vec 0,\bvec\mu)$
and exploiting the definition of $\bvec R$ yields 
\begin{align*}
  -\langle \bvec R(\vec 0,\bvec\lambda)\,\nu, \bvec\mu\rangle_\setEdge
  = a(\bvec R(\vec 0,\bvec\lambda),\bvec R(\vec 0,\bvec\mu)).
\end{align*}
For any $\bvec\lambda \neq \vec 0$, this identity implies the strict inequality
\begin{align*}
  B(\bvec\lambda,\bvec\lambda)
  = a(\bvec R(\vec 0,\bvec\lambda),\bvec R(\vec 0,\bvec\lambda))
  + \langle\tau\bvec\lambda,\bvec\lambda\rangle_\setEdge
  > 0,
\end{align*}
as every node has an incident edge and $\tau>0$.
Substituting $\vqd$ and $\vldh$ into \eqref{eq:sd_saddle2} yields a linear second-order system
of ordinary differential equations for $\vyd$ in $\spoly$
with symmetric positive definite mass form $c(\cdot,\cdot)$.
Since $\vf\in H^{1}(0,T;\Ltwo)$ and $\lambdaD\in H^{2}(0,T;\vec\Lambda_\mathrm{D})$,
the Picard--Lindelöf theorem yields a unique solution $\vyd\in H^{3}(0,T;\spoly)$
attaining the initial conditions, and the eliminated variables $\vqd$ and $\vldh$
inherit $H^{2}$-regularity in time.
Here, no compatibility conditions arise,
since the algebraic constraints \cref{eq:sd_saddle1} and \cref{eq:sd_saddle3}
determine the discrete dual and hybrid variables from any initial state
rather than restricting the admissible data.
\end{proof}

\subsection{Semi-discrete energy conservation}
We define the semi-discrete energy
\begin{align}
	\label{eq:energy}
	\mathfrak E_s[\vqd, \vyd, \bvec\lambda_\setNode](t) \coloneqq
	\tfrac12 a(\vqd,\vqd)(t) + \tfrac12 c(\del_t\vyd,\del_t\vyd)(t) + \tfrac12 \| \tau^{1/2} (\vyd - \vldD)\|^2_\setEdge.
\end{align}
Differentiating \cref{eq:sd_saddle1} with respect to time, testing the result with $\vqd$, testing \cref{eq:sd_saddle2} with $\del_t \vyd$, and summing the two equations yields
\begin{align*}
	\frac{d}{dt}\mathfrak E_s[\vqd, \vyd, \bvec\lambda_\setNode](t) = (\vec f,\del_t\bvec y)_\Sigma
	- \langle \del_t\vldD, \vqd\nu+\tau(\vyd-\vldD) \rangle_\setEdge.
\end{align*}
For zero external forces ($\vf = \vec 0$) and time-independent Dirichlet data ($\del_t\lambdaD = \vec 0$), applying the nodal balance conditions \cref{eq:sd_saddle3} yields global energy conservation:
\begin{align}
	\label{eq:energypreservation_sd}
	\frac{d}{dt}\mathfrak E_s[\vqd, \vyd, \bvec\lambda_\setNode](t) = 0,
\end{align}
analogous to the continuous setting \cref{eq:energypreservationcont}.

\subsection{Semi-discrete error analysis}\label{sec:semidisc_error}
The error analysis presented below relies on the projection-based HDG framework from~\cite{Cockburn2010}. To analyze the dynamic Timoshenko beam network problem, we combine this framework with techniques from the HDG error analysis for acoustic wave equations~\cite{Cockburn2014} and the static beam network setting~\cite{hauck2025}.
Central to this framework is the projection operator
\begin{align*}
  \vec\Pi_\edge = (\vec{\Pi}^\mathrm{q}_\edge ,\vec{\Pi}^\mathrm{y}_\edge)
  \colon \mathbf{H}^1(\edge) \times \mathbf{H}^1(\edge)
  \to \spoly_\edge \times\spoly_\edge,
\end{align*}
which preserves moments up to degree $p-1$ and mimics the discrete coupling structure at the endpoints through the stabilization parameter $\tau$. Specifically, for any $(\vq,\vy) \in \mathbf{H}^1(\edge) \times \mathbf{H}^1(\edge)$, the projection is defined as the unique pair satisfying
\begin{subequations}\label{eq:errproj}
\begin{align}
(\vPiq_\edge (\vq,\vy), \wqtestd)_\edge  &= (\vq,\wqtestd)_\edge,\\
(\vPiy_\edge (\vq,\vy), \wytestd)_\edge  &= (\vy,\wytestd)_\edge,\\
\vPiq_\edge (\vq,\vy)(\node)\nu_\edge(\node) + \tau_\edge \vPiy_\edge (\vq,\vy)(\node) &= \vq(\node)\nu_\edge(\node) + \tau_\edge \vy(\node)  \quad \text{ for } (\edge, \node) \in \mathcal I, \label{eq:node_condi}
\end{align}
\end{subequations}
for all test functions $\wqtestd,\wytestd \in \vec{V}^{p-1}_\edge$.
The following lemma, an adaptation of \citep[Thm.~2.1]{Cockburn2010}, establishes error estimates for this projection operator. Here and in what follows, $h_\edge \coloneqq \operatorname{len}(\edge)$ denotes the length of an edge $\edge \in \setEdge$, and $\tau_\edge \coloneqq \tau|_\edge > 0$ represents the local stabilization parameter on $\edge$.
\begin{lemma}[Projection error] \label{lem:proj_err}
  For any $p_\mathrm{q}, p_\mathrm{y}\in \{0, \dots, p\}$ and all
  $\vq \in \mathbf{H}^{p_{\mathrm{q}}+1}(\edge)$, $\vy \in \mathbf{H}^{p_{\mathrm{y}}+1}(\edge)$,
  the projection errors satisfy the bounds
  \begin{align*}
    \|\vq - \vPiq_\edge(\vq,\vy)\|_\edge
    &\lesssim \tau_\edge h^{p_\mathrm{y}+1}_\edge
    \|\vy\|_{\edge, {p_{\mathrm{y}}+1}}
    + h^{p_{\mathrm{q}}+1}_\edge \|\vq\|_{\edge, {p_{\mathrm{q}}+1}},\\
    \|\vy - \vPiy_\edge(\vq,\vy)\|_\edge
    &\lesssim h^{p_\mathrm{y}+1}_\edge \|\vy\|_{\edge, p_{\mathrm{y}}+1}
      + \tau^{-1}_\edge h^{p_{\mathrm{q}}+1}_\edge \|\vq\|_{\edge, p_{\mathrm{q}}+1}.
  \end{align*}
\end{lemma}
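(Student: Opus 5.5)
The plan is to follow the one-dimensional version of the argument of \citep[Thm.~2.1]{Cockburn2010}, working on a single edge and reducing everything to a reference interval by scaling.

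\emph{Step 1: well-posedness and decoupling.} Fix an edge $\edge$ and identify it with $[0,h_\edge]$ via $\gamma_\edge$; the two endpoints carry signs $\nu=-1$ at $x=0$ and $\nu=+1$ at $x=h_\edge$. The conditions \cref{eq:errproj} give $2\cdot 6p$ moment conditions plus $2\cdot 6$ nodal conditions. This equals $\dim(\spoly_\edge\times\spoly_\edge)=12(p+1)$. It therefore suffices to check injectivity, i.e.\ that zero data force $\vPiq=\vPiy=0$.

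For zero data, both components are orthogonal to $\vec V^{p-1}_\edge$. Hence each is a multiple of the Legendre polynomial $L_p$ (shifted to the edge), componentwise: $\vPiq=\vec\alpha L_p$ and $\vPiy=\vec\beta L_p$. Using $L_p(h_\edge)=1$ and $L_p(0)=(-1)^p$, the nodal conditions read
\[
\vec\alpha+\tau_\edge\vec\beta=0,
\qquad
(-1)^p\bigl(-\vec\alpha+\tau_\edge\vec\beta\bigr)=0.
\]
Since $\tau_\edge>0$, this gives $\vec\alpha=\vec\beta=0$. Note that the six components decouple completely; neither $\vec C$ nor $\vix$ enters. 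So the problem is a scalar one-dimensional problem, solved componentwise.

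\emph{Step 2: error representation.} Let $\vec P$ denote the $L^2(\edge)$-orthogonal projection onto $\spoly_\edge$. Write
\[
\vq-\vPiq=(\vq-\vec P\vq)+\bigl(\vec P\vq-\vPiq\bigr),
\]
and decompose $\vy-\vPiy$ analogously. The discrete parts $\vec\delta_\mathrm{q}\coloneqq\vec P\vq-\vPiq$ and $\vec\delta_\mathrm{y}\coloneqq\vec P\vy-\vPiy$ are orthogonal to $\vec V^{p-1}_\edge$. Hence $\vec\delta_\mathrm{q}=\vec\alpha L_p$ and $\vec\delta_\mathrm{y}=\vec\beta L_p$, where $(\vec\alpha,\vec\beta)$ solves the $2\times2$ system above with right-hand sides
\[
\vec e_\mathrm{q}(\node)\,\nu+\tau_\edge\,\vec e_\mathrm{y}(\node),
\qquad
\vec e_\mathrm{q}\coloneqq\vq-\vec P\vq,\quad \vec e_\mathrm{y}\coloneqq\vy-\vec P\vy .
\]
Solving the system explicitly, with determinant $\pm2\tau_\edge$, gives
\[
|\vec\alpha|\lesssim \max_\node|\vec e_\mathrm{q}(\node)|+\tau_\edge\max_\node|\vec e_\mathrm{y}(\node)|,
\qquad
|\vec\beta|\lesssim \tau_\edge^{-1}\max_\node|\vec e_\mathrm{q}(\node)|+\max_\node|\vec e_\mathrm{y}(\node)|.
\]
This is exactly the $\tau$-weighting pattern in the claim.

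\emph{Step 3: scaling bounds.} We have $\|L_p\|_\edge\simeq h_\edge^{1/2}$. For the nodal values of the $L^2$-projection error, the one-dimensional trace inequality applied on the edge gives
\[
|\vec e_\mathrm{y}(\node)|\lesssim h_\edge^{-1/2}\|\vec e_\mathrm{y}\|_\edge+h_\edge^{1/2}\|\partial_x\vec e_\mathrm{y}\|_\edge .
\]
Combined with standard Bramble--Hilbert estimates for $\vec P$ (valid since $p_\mathrm{y}\le p$), this yields $|\vec e_\mathrm{y}(\node)|\lesssim h_\edge^{p_\mathrm{y}+1/2}\|\vy\|_{\edge,p_\mathrm{y}+1}$, and similarly for $\vec e_\mathrm{q}$ with $p_\mathrm{q}$. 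Multiplying by $\|L_p\|_\edge\simeq h_\edge^{1/2}$ and adding $\|\vec e_\mathrm{q}\|_\edge\lesssim h_\edge^{p_\mathrm{q}+1}\|\vq\|_{\edge,p_\mathrm{q}+1}$ (and its $\mathrm{y}$-analogue) gives both stated bounds. The hidden constants depend only on $p$.

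\emph{Main obstacle.} The delicate point is the nodal trace estimate for the $L^2$ projection with the correct power of $h_\edge$ uniformly in the edge length. I would handle it by affine mapping to $[0,1]$ and using the equivalence of norms there. Everything else is explicit linear algebra, made possible by the one-dimensional Legendre structure.
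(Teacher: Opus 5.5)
Your proof is correct and is essentially the argument the paper intends. The paper gives no proof of its own and cites \citep[Thm.~2.1]{Cockburn2010}, and you carry out that reference's strategy: split off the $L^2$-projection error, then bound the remainder, which lies in the $L^2$-orthogonal complement of $\vec V^{p-1}_\edge$, through its nodal values and scaling. In one dimension that complement is spanned by the Legendre polynomial $L_p$, so your explicit $2\times2$ solve with determinant $\pm2\tau_\edge$ replaces the reference's integration-by-parts uniqueness argument and makes the $\tau_\edge^{\pm1}$ weights explicit; the right-hand side of your system should carry a minus sign, which does not affect the bounds.
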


Globally, we split the error as
\begin{align*}
  \vq - \bar{\vq} &= \vtheta^\rmq + \veps^\rmq, &
  \vy - \bar{\vy} &= \vtheta^\rmy + \veps^\rmy,
\end{align*}
where the projection errors $\vtheta^\rmq, \vtheta^\rmy$
and the discrete errors $\veps^\rmq, \veps^\rmy$ are given by
\begin{align*}
  \vtq &\coloneqq \vec q - \vPiq(\vec q, \vec y), &
  \veq &\coloneqq \vPiq(\vec q, \vec y) - \vqd,\\
  \vty &\coloneqq \vec y - \vPiy(\vec q, \vec y),&
  \vey &\coloneqq \vPiy(\vec q, \vec y) - \vyd.
\end{align*}

We next derive the error equations for the semi-discrete problem that form the foundation of our subsequent error analysis.
\begin{lemma}[Error equations] \label{lem:semid_err_equ}
For all test functions $\wqtestd, \wytestd \in \spoly$ and almost all $t \in (0,T)$, the following identities hold:
\begin{align*}
    (\delx \wqtestd, \vey)_\Sigma
    &= a(\vec q-\vqd, \wqtestd)
    - (\vix \wqtestd,\vec y-\vyd)_\Sigma
    + \langle \vlD - \vldD, \wqtestd \, \nu \rangle_\setEdge,\\
    (\delx\veq, \wytestd)_\Sigma
    &= -(\vix(\vec q-\vqd), \wytestd)_\Sigma
    - c(\del_{tt} (\vec y - \vyd), \wytestd)
    + \langle\vy-\vldD-\vey, \tau\wytestd\rangle_\setEdge.
\end{align*}
\end{lemma}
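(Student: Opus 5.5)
The plan is the standard Galerkin-orthogonality argument of \cite{Cockburn2010}. We first show that the exact solution satisfies the semi-discrete equations \cref{eq:sd_saddle1,eq:sd_saddle2} when tested with discrete functions. We then subtract the semi-discrete equations. Finally, we use the defining properties \cref{eq:errproj} of $\vec\Pi$ to move derivatives onto the test functions and to replace the exact solution by its projection.

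\emph{First identity.} Take $\wqtestd\in\spoly\subset\Hpw$ in \cref{eq:saddle-1} and subtract \cref{eq:sd_saddle1}. This gives
\[
a(\vq-\vqd,\wqtestd)+b(\wqtestd,\vy-\vyd)=-\langle\vlD-\vldD,\wqtestd\nu\rangle_\setEdge .
\]
By definition, $b(\wqtestd,\vy-\vyd)=-(\delx\wqtestd,\vy-\vyd)_\Sigma-(\vix\wqtestd,\vy-\vyd)_\Sigma$. We split $\vy-\vyd=\vty+\vey$. Since $\delx\wqtestd\in\vec V^{p-1}$ edgewise, the moment condition for $\vPiy$ gives $(\delx\wqtestd,\vty)_\Sigma=0$, so $(\delx\wqtestd,\vy-\vyd)_\Sigma=(\delx\wqtestd,\vey)_\Sigma$. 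Rearranging gives the first identity. No integration by parts is needed here.

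\emph{Second identity.} Remark~\ref{rem:add_reg} shows that the exact traces of $\vy$ coincide with $\vlD$. Hence the term $\langle\tau(\vy-\vlD),\wytestd\rangle_\setEdge$ vanishes, and the exact solution also satisfies \cref{eq:sd_saddle2} with $\vldD$ replaced by $\vlD$. We subtract \cref{eq:sd_saddle2} and keep the stabilization term in the form $\langle\tau(\vy-\vldD),\wytestd\rangle_\setEdge$, using $\vy=\vlD$ at the nodes. This yields
\[
(\delx(\vq-\vqd),\wytestd)_\Sigma+(\vix(\vq-\vqd),\wytestd)_\Sigma+c(\del_{tt}(\vy-\vyd),\wytestd)-\langle\tau(\vy-\vldD),\wytestd\rangle_\setEdge=0 .
\]
The main work is to convert $(\delx(\vq-\vqd),\wytestd)_\Sigma$ into $(\delx\veq,\wytestd)_\Sigma$ plus a boundary term. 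We split $\vq-\vqd=\vtq+\veq$ and integrate by parts on each edge:
\[
(\delx\vtq,\wytestd)_\edge=-(\vtq,\delx\wytestd)_\edge+\langle\vtq\nu,\wytestd\rangle_\edge .
\]
Here the sign convention is that $\nu=+1$ at the endpoint the tangent points to. The volume term vanishes by the moment condition for $\vPiq$. By \cref{eq:node_condi}, $\vtq\nu=-\tau\vty$ at each endpoint. Therefore $(\delx\vtq,\wytestd)_\Sigma=-\langle\tau\vty,\wytestd\rangle_\setEdge$.

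Substituting this, the boundary terms combine as
\[
-\langle\tau\vty,\wytestd\rangle_\setEdge-\langle\tau(\vy-\vldD),\wytestd\rangle_\setEdge .
\]
We write $\vty=\vy-\vyd-\vey$ at the nodal traces and collect terms. Moving everything except $(\delx\veq,\wytestd)_\Sigma$ to the right-hand side should give exactly $\langle\vy-\vldD-\vey,\tau\wytestd\rangle_\setEdge$. Before stating this, the signs and the factors of $\vy-\vyd$ versus $\vy-\vldD$ must be checked carefully. This bookkeeping of sign conventions for $\nu$ in the edgewise integration by parts, together with the correct use of \cref{eq:node_condi}, is the step I expect to be the main obstacle.

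If the terms do not match as written, the discrepancy will be a multiple of $\langle\tau(\vyd-\vldD),\wytestd\rangle$. In that case I would recheck whether the exact solution should be inserted in \cref{eq:sd_saddle2} with the stabilization written as $\tau(\vy-\vldD)$ rather than $\tau(\vy-\vlD)$.
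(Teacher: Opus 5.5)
Your route is the paper's: subtract the two formulations, integrate by parts edgewise, and use \cref{eq:errproj}. Your first identity and the edgewise computation $(\delx\vtq,\wytestd)_\Sigma=-\langle\tau\vty,\wytestd\rangle_\setEdge$ are both correct. The second identity, however, is not established. Your displayed subtracted equation has the wrong stabilization term, and the bookkeeping step you leave open cannot be closed from it.

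Since $\langle\tau(\vy-\vlD),\wytestd\rangle_\setEdge=0$, subtracting \cref{eq:sd_saddle2} leaves only the discrete stabilization term:
\[
(\delx(\vq-\vqd),\wytestd)_\Sigma+(\vix(\vq-\vqd),\wytestd)_\Sigma+c(\del_{tt}(\vy-\vyd),\wytestd)-\langle\tau(\vyd-\vldD),\wytestd\rangle_\setEdge=0 .
\]
The last bracket contains $\vyd$, not $\vy$. If you rewrite $\vy-\vlD-\vyd+\vldD$ as $(\vy-\vyd)-(\vy-\vldD)$ using $\vy=\vlD$ at the nodes, your display drops the term $\langle\tau(\vy-\vyd),\wytestd\rangle_\setEdge$.

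With the correct relation, your integration-by-parts identity gives the boundary term $\langle\tau(\vty+\vyd-\vldD),\wytestd\rangle_\setEdge$. The observation that closes the argument is
\[
\vty+\vyd=\vy-\vPiy(\vq,\vy)+\vyd=\vy-\vey
\]
on the edgewise traces. This turns the boundary term into exactly $\langle\vy-\vldD-\vey,\tau\wytestd\rangle_\setEdge$.

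From your displayed version you would instead get $\langle\tau(2\vy-\vyd-\vey-\vldD),\wytestd\rangle_\setEdge$. That contains a spurious $\langle\tau(\vy-\vyd),\wytestd\rangle_\setEdge$, which is not a multiple of $\langle\tau(\vyd-\vldD),\wytestd\rangle_\setEdge$ as you anticipated. Your fallback also fails. The exact solution does not satisfy \cref{eq:sd_saddle2} with stabilization $\tau(\vy-\vldD)$, because at the nodes $\vy-\vldD=\vlD-\vldD$, which does not vanish in general.
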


\begin{proof}
The error equations follow directly by subtracting the continuous formulation~\cref{eq:saddle} from the semi-discrete problem~\cref{eq:sd_saddle}, integrating by parts, and applying the projection properties~\cref{eq:errproj}.
\end{proof}

The following theorem establishes the spatial convergence of the semi-discrete approximation with respect to the global mesh size parameter $h$, defined as
\begin{equation*}h \coloneqq \max_{\edge \in \setEdge} h_\edge.
\end{equation*}

\begin{theorem}[Semi-discrete convergence] \label{thm:semi_d_err}
Assume that the exact solution
$(\vec q, \vec y)$ satisfies $\vec q \in W^{2,\infty}(0,T; \mathbf{H}_\mathrm{pw}^{p+1}(\Sigma))$ and $\vec y \in W^{2,\infty}(0,T; \mathbf{H}^{p+1}(\Sigma))$, and choose $\bvec y_0,\bvec y_1\in\spoly$ as the $L^2$-projections
of $\vec y_0,\vec y_1$, respectively.
Let the stabilization parameter satisfy $\tau_\edge \simeq h_\edge^s$ for all $\edge \in \setEdge$ and some $s\in\{-1,0,1\}$.
Then, the semi-discrete HDG approximation $(\vqd,\vyd)$ converges to the exact solution with
\begin{align*}
  \esssup_{t \in [0,T]} \ &(\|\vec y(t)-\vyd(t)\|_\Sigma
  + \|\vec q(t)-\vqd(t)\|_\Sigma) \lesssim h^{p+1-|s|}.
\end{align*}
The hidden constant is independent of $h$ and depends on the coefficient bounds, quadratically on $T$, and on suitable norms of the exact solution.
\end{theorem}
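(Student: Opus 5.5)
The argument will be an energy estimate for the discrete errors $(\veq,\vey)$, in the spirit of the HDG wave analysis of \cite{Cockburn2014}, combined with the projection estimates of \cref{lem:proj_err}. By the triangle inequality $\|\vy-\vyd\|_\Sigma\le\|\vty\|_\Sigma+\|\vey\|_\Sigma$, and likewise for $\vq$. The projection parts are bounded by \cref{lem:proj_err} with $p_\mathrm q=p_\mathrm y=p$ and $\tau_\edge\simeq h_\edge^s$. This gives $\|\vtq\|+\|\vty\|\lesssim h^{p+1-|s|}$ uniformly in $t$, thanks to the $W^{2,\infty}$-in-time regularity. The same bound holds for $\del_t\vtheta$ and $\del_{tt}\vtheta$, since $\vec\Pi$ commutes with $\del_t$.

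\textbf{Error identity.} It remains to control $\veq,\vey$. I would define the discrete hybrid error $\vel\coloneqq\vPiy(\vq,\vy)$-type nodal quantity, or more simply $\vel\coloneqq\vlD-\vldD\in\vLambda$; note $\vlD=\vy$ at the nodes by \cref{rem:add_reg}. Then I would do the following.
\begin{itemize}
\item Differentiate the first identity of \cref{lem:semid_err_equ} in time and test with $\wqtestd=\veq$.
\item Test the second identity with $\wytestd=\del_t\vey$.
\item Add the two results.
\end{itemize}
The terms $(\delx\veq,\del_t\vey)_\Sigma$ cancel after integration by parts on each edge, producing the boundary term $\langle\veq\nu,\del_t\vey\rangle_\setEdge$. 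The node condition \cref{eq:node_condi} of the projection gives $\vq\nu+\tau\vy-(\vPiq\nu+\tau\vPiy)=0$ at the nodes. Hence the numerical-flux error equals $\veq\nu+\tau(\vey-\vel)$. Together with the balance conditions \cref{eq:saddle-3} and \cref{eq:sd_saddle3}, tested with $\bvec\mu=\del_t\vel$, this shows $\langle\veq\nu+\tau(\vey-\vel),\del_t\vel\rangle_\setEdge=0$.

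\textbf{Energy inequality.} Collecting terms, I expect
\[
\tfrac{d}{dt}\Bigl(\tfrac12 a(\veq,\veq)+\tfrac12 c(\del_t\vey,\del_t\vey)+\tfrac12\|\tau^{1/2}(\vey-\vel)\|_\setEdge^2\Bigr)
=-a(\del_t\vtq,\veq)+(\vix\veq,\del_t\vtheta^\rmy)_\Sigma-(\vix\vtq,\del_t\vey)_\Sigma-c(\del_{tt}\vty,\del_t\vey)+\dots
\]
The leftover terms involve the cross-product coupling $(\vix\veq,\del_t\vey)$ and its transpose. These should cancel by the antisymmetric pairing in $b$; if they do not cancel exactly, they are bounded by $C(\|\veq\|^2+\|\del_t\vey\|^2)$, which Gronwall handles.

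\textbf{Gronwall step.} Each right-hand side term is estimated by Cauchy--Schwarz and Young. This gives $\frac{d}{dt}E\lesssim E+\|\del_t\vtheta\|^2+\|\del_{tt}\vtheta\|^2$. The time-derivative terms such as $(\vix\vtq,\del_t\vey)$ can alternatively be moved by integrating by parts in time, since $\vey$ itself is needed anyway. Then $\|\vey(t)\|\le\|\vey(0)\|+\int_0^t\|\del_t\vey\|$, which gives the quadratic $T$-dependence. With the $L^2$-projection initial data, the projection-orthogonality shows $\vey(0)$ and $\del_t\vey(0)$ are of the order of $\|\vty(0)\|$ and $\|\del_t\vty(0)\|$. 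For $\veq(0)$, the first error equation at $t=0$ is tested with $\veq(0)$ and the trace term is controlled via $\tau$; this gives an $h^{p+1-|s|}$ bound.

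\textbf{Main obstacle.} The delicate step is making the nodal terms combine exactly using \cref{eq:node_condi} and the two balance equations, and controlling the initial value of $\veq$ together with the trace term $\|\tau^{1/2}(\vey-\vel)(0)\|$. If the initial $\veq$ cannot be bounded directly, I would instead test the first identity undifferentiated with $\del_t\veq$-free quantities. Alternatively, I would use the elliptic-type HDG estimate from \cite{hauck2025} at $t=0$, treating $\del_{tt}\vy(0)$ as a load. In either case the resulting bound is $h^{p+1-|s|}$.
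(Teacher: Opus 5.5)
Your error-energy argument is the paper's: the same functional $\mathfrak E_h$ (your hybrid error $\vlD-\vldD$ equals the paper's $\vy-\vldD$ at the nodes), the same test functions, and the same combination of \cref{eq:node_condi} with both balance conditions. Two steps do not give the statement as written.

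The minor one is the time integration. The cross term $(\vix\veq,\del_t\vey)_\Sigma$ cancels exactly: it enters the two tested error equations with opposite signs. Likewise, with \cref{lem:semid_err_equ} the two copies of $(\delx\veq,\del_t\vey)_\Sigma$ cancel on subtraction, with no extra boundary term. So the right-hand side has no term quadratic in the discrete errors. The paper uses this to get $\frac{d}{dt}\mathfrak E_h\lesssim\sqrt{\mathfrak E_h}\,h^{p+1-|s|}$, hence $\sqrt{\mathfrak E_h(t)}\le\sqrt{\mathfrak E_h(0)}+CTh^{p+1-|s|}$, and integrating $\del_t\vey$ then gives the quadratic $T$-dependence. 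Your Young-plus-Gronwall step gives a factor $e^{CT}$ instead, unless the Young weight is chosen $\sim T^{-1}$.

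The genuine gap is the initial energy in the relevant case $s=0$. Let $\mathcal P$ be the $L^2$-projection onto $\spoly$. Then $\vey(0)=-\mathcal P\vty(0)$. Since $\vty\perp\vec V^{p-1}_\edge$, this is, on each edge, the degree-$p$ Legendre mode of $\vty(0)$. By \cref{eq:node_condi}, its coefficient is approximately $-\tau_\edge^{-1}$ times the $(p+1)$st Legendre coefficient of $\vq(0)$, so its nodal values are of size $\tau_\edge^{-1}h_\edge^{p+1}$.

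Testing the first error equation at $t=0$ with $\veq(0)$, and the jump condition with $(\vlD-\vldD)(0)$, does remove $(\delx\veq,\vey)_\Sigma$. It leaves $\langle\tau(\vey-(\vlD-\vldD)),\vey\rangle_\setEdge$, and a discrete trace inequality bounds $\|\tau^{1/2}\vey(0)\|_\setEdge$ only by $O(h^{p+1/2})$ when $s=0$.

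This loss is real. For odd $p$ one has $L_p(-1)=-L_p(1)$. Hence on a consistently oriented, uniformly refined chain, the two traces of $\vey(0)$ at each of the $\sim h^{-1}$ refinement nodes are approximately $\pm\vec\delta$ with $|\vec\delta|\simeq h^{p+1}$, whenever $\delx^{p+1}\vq(0)\not\equiv\vec 0$. Since $\vy-\vldD$ is single-valued at the nodes, this forces $\|\tau^{1/2}(\vey-(\vy-\vldD))(0)\|_\setEdge^2\gtrsim h^{2p+1}$ regardless of the discrete solution.

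Consequently, the bound $\mathfrak E_h(0)\lesssim h^{2(p+1)}$, which the paper only asserts, fails in general. No estimate of $\veq(0)$, direct or via the elliptic analysis of \cite{hauck2025}, can repair it, because the obstruction is fixed by $\bvec y_0$ alone. With $L^2$-projected $\bvec y_0$, this energy route yields only $h^{p+1/2}$ for $s=0$.

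The remedy is to initialize with the HDG projection, $\bvec y_0\coloneqq\vPiy(\vq(0),\vy_0)$ with $\vq(0)=\vec q_0$ from \cref{eq:comp_cond}, so that $\vey(0)=\vec 0$. The same two tests then give
\[
a(\veq,\veq)+\|\tau^{1/2}(\vlD-\vldD)\|_\setEdge^2=-a(\vtq,\veq)+(\vix\veq,\vty)_\Sigma\qquad\text{at }t=0,
\]
hence $\mathfrak E_h(0)\lesssim h^{2(p+1-|s|)}$. The choice of $\bvec y_1$ as the $L^2$-projection can stay, since $\del_t\vey(0)=-\mathcal P\del_t\vty(0)$ enters only through $c$.
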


\begin{proof}
By \cref{eq:EtoN}, the discrete balance condition \cref{eq:sd_saddle3} holds pointwise at every non-Dirichlet node. Taking its difference with \cref{eq:balance} and using the projection property \cref{eq:node_condi}, we obtain the condition
\begin{align}
    \jump{\veq \, \nu + \tau \vey - \tau(\vy - \vldD)}(t, \node) = 0, \label{eq:jump_diff}
\end{align}
for almost all $t \in (0,T)$ and all $\node \in \setNode \setminus \setNodeDir$.

In view of \cref{eq:EtoN}, this equation can be used when
testing the time derivative of the first error equation in \cref{lem:semid_err_equ} with $\veq$, as well as the second equation with $\del_t \vey$. Since the primal unknown $\vy$ belongs to $\Hone$ by \cref{rem:add_reg}, its nodal traces are single-valued and coincide with the hybrid unknown $\vlD$ at the network nodes. Comparing the expressions yields
\begin{align*}
        -\langle \del_t (\vy - \vldD), \veq \,\nu
        + \tau \vey
        - \tau (\vy - \vldD)\rangle_\setEdge
        = a(\del_t(\vq-\vqd), \veq)
        + c(\del_{tt}(\vy - \vyd), \del_t \vey) \quad\\
        - (\vix \veq, \del_t (\vy - \vyd))_\Sigma
        + (\vix (\vq - \vqd), \del_t \vey)_\Sigma
        + \langle \vey - (\vy - \vldD), \tau \del_t (\vey - (\vy - \vldD)) \rangle_\setEdge,
\end{align*}
where the term $\langle \del_t (\vy - \vldD), \tau \vey - \tau (\vy - \vldD)\rangle_\setEdge$ was added and subtracted so that \cref{eq:jump_diff} can be applied to the left-hand side. Furthermore,
the term $(\vix \veq, \del_t \vey)_\Sigma$ cancels. We obtain the identity
\begin{align*}
        a(\del_t \veq, \veq)
        + c(\del_{tt}\vey, \del_t \vey)
        + \langle \vey - (\vy - \vldD), \tau \del_t (\vey - (\vy - \vldD)) \rangle_\setEdge\\
        =
        - a(\del_t \vtq, \veq)
        - c(\del_{tt} \vty, \del_t \vey)
        + (\vix \veq, \del_t \vty)_\Sigma
        - (\vix \vtq, \del_t \vey)_\Sigma.
\end{align*}

This motivates the definition of the energy functional
\begin{align*}
    \mathfrak{E}_h(t)
    \coloneqq
    \tfrac12 a(\veq, \veq)(t)
    + \tfrac12 c(\del_t \vey,\del_t \vey)(t)
     + \tfrac12  \| \tau^{1/2} (\vey-(\vy-\vldD))(t)\|_\setEdge^2 .
\end{align*}
Taking the derivative of $\mathfrak E_h$ yields
\begin{align*}
    \frac{\partial \mathfrak E_h}{\partial t}(t)
    &=
    - a(\del_t \vtq, \veq)
    - c(\del_{tt} \vty, \del_t \vey)
    + (\vix \veq, \del_t \vty)_\Sigma
    - (\vix \vtq, \del_t \vey)_\Sigma\\
    &\lesssim
    (\| \del_t \vtq \|_\Sigma + \| \del_t \vty \|_\Sigma) \| \veq \|_\Sigma
    + (\| \del_{tt} \vty \|_\Sigma + \| \vtq \|_\Sigma) \| \del_t \vey \|_\Sigma.
\end{align*}
Using coercivity of $a$ and $c$ and the approximation bounds for the projection terms from \cref{lem:proj_err}, we obtain
    \begin{align*}
        \frac{\partial \mathfrak{E}_h}{\partial t}(t)
        \lesssim \sqrt{\mathfrak{E}_h(t)} \ h^{p+1-|s|}.
    \end{align*}
Therefore,
\begin{align*}
        \esssup_{t \in [0,T]}
        \big[
        \| \veq(t) \|^2_\Sigma
        + \| \del_t \vey(t)\|^2_\Sigma
        \big]^{1/2}
        \lesssim \esssup_{t \in [0,T]} \sqrt{\mathfrak{E}_h(t)}
        \lesssim T h^{p+1-|s|} + \sqrt{\mathfrak{E}_h(0)}.
\end{align*}
By the choice of the discrete initial data $\bvec y_0$ and $\bvec y_1$ as the $L^2$ projections of $\vec y_0$ and $\vec y_1$ onto $\spoly$, the initial dual and hybrid variables are recovered from \cref{eq:sd_saddle1} and \cref{eq:sd_saddle3} by solving the edgewise
defined local problems.
Consequently,
\begin{align*}
    \mathfrak{E}_h(0) \lesssim h^{2(p+1-|s|)}.
\end{align*}
The estimate
    \begin{align*}
        \| \vey(t)\|_\Sigma
        \leq \| \vey(0)\|_\Sigma
        + \int_0^t \| \del_{\tilde t} \vey(\tilde t) \|_\Sigma \,\mathrm{d}\tilde t
    \end{align*}
    completes the proof.
\end{proof}

\begin{remark}[Sharpness of the error estimate]\label{rem:sharpness}
The estimate in \cref{thm:semi_d_err} is optimal for the relevant choice $\tau_\edge\simeq 1$ ($s=0$), where it yields the rate $h^{p+1}$.
For $\tau_\edge\simeq h_\edge^{-1}$ ($s=-1$), the bound gives only the rate $h^p$. Numerical experiments in \cref{sec:numexp}, however, indicate one additional order of convergence in this case. This is consistent with the corresponding static Timoshenko beam problem considered in~\cite{hauck2025}, where the optimal rate of $h^{p+1}$ can be proved.
\end{remark}

\begin{remark}[Comparison with existing HDG formulations]
The first-order formulation considered here differs from existing formulations in the literature, such as those analyzed in~\cite{GriesmaierMonk2014}. There, two HDG formulations are proposed: a dissipative method based on the standard HDG numerical flux and an energy-conservative method based on a modified flux involving time derivatives. While the dissipative formulation admits optimal-order error estimates, the analysis of the energy-conservative formulation yields a convergence rate reduced by half an order; see~\citep[Thm.~6]{GriesmaierMonk2014}. This reduction is, however, not observed in their numerical experiments. In contrast, the present formulation is energy-conservative and, for the standard choice $\tau_\edge\simeq 1$, admits optimal-order error estimates.
We also note that additional superconvergence results could be obtained for HDG methods by local postprocessing together with a duality argument; see, e.g., \cite{Cockburn2014}. We do not pursue this here, since increasing the polynomial degree provides a straightforward way to improve the accuracy of all variables.
\end{remark}

\section{Fully discrete problem} \label{sec:fullydisc}
In this section, we derive a fully discrete scheme by discretizing the semi-discrete formulation in time. As a representative yet straightforward choice, we employ the Crank--Nicolson method. We emphasize that the following analysis extends naturally to the higher-order Gauss collocation methods, i.e., the collocation Runge--Kutta methods based on the Gauss--Legendre points, whose high-order convergence properties are demonstrated numerically in \cref{sec:numexp}.

Throughout this section, we additionally assume that $\vec C_\mathrm{u}$ and $\vec C_\mathrm{r}$, equivalently $\vec C_\mathrm{y}$, are edgewise constant. This assumption is made for simplicity, so that the second-order semi-discrete HDG formulation can be reformulated as an exactly equivalent first-order system in time using the auxiliary variable $\vzd = \Cy \del_t \vyd$.

\subsection{Time stepping}
Let $(t^k)_{k=0}^K$ be a uniform partition of $[0,T]$ with time step $\Delta t > 0$ and midpoint times $t^{k-1/2}=\tfrac12 (t^k+t^{k-1})$.
We introduce the bilinear form
\begin{align*}
d \colon \Ltwo\times\Ltwo \to \R, \qquad
d(\vy,\vw_\mathrm{y}) \coloneqq (\vec{C}_\mathrm{y}^{-1} \vy,\vec{w}_\mathrm{y})_\Sigma.
\end{align*}
The initial conditions for $\vyd$ and $\vzd$ are prescribed by
\begin{align*}\vyd(0) = \bvec y_0, \qquad \vzd(0) = \vzd_0,
\end{align*}
where $\vzd_0 \in \spoly$ satisfies
\begin{align*}
    d(\vzd_0, \wztestd) = (\bvec y_1, \wztestd)_\Sigma \qquad \forall \wztestd \in \spoly, \end{align*}
with \(\bvec y_0, \bvec y_1 \in \spoly\) denoting the discrete initial data from the second-order semi-discrete formulation.
This condition is the discrete counterpart of
\[\vzd(0) = \Cy \del_t \vyd(0) = \Cy \bvec y_1,\]
matching the initial auxiliary state of the first-order system.
\begin{remark}[Equivalence to second-order formulation]
The edgewise constancy of $\vec C_\mathrm{y}$ ensures that $\vec C_\mathrm{y}\spoly \subset \spoly$.
Hence, $\vzd \in \spoly$, and we obtain the exact equivalence of first- and second-order formulations.
For spatially varying $\vec C_\mathrm{y}$, the weak definition of $\vzd$ introduces a projection of $\Cy\del_t\vyd$ onto $\spoly$,
so that the exact equivalence is lost up to a consistency error, which would then have to be accounted for in the error analysis.
\end{remark}
In the first-order formulation, the semi-discrete saddle-point problem \cref{eq:sd_saddle} is replaced by seeking $(\vqd, \vyd, \vzd, \vldh)$, with $\vldD \coloneqq \vldh + \lambdaD$, such that
\begin{alignat*}{3}
    a(\vqd, \wqtestd) &+ b(\wqtestd, \vyd) && &&= -\langle \vldD, \wqtestd \nu \rangle_\setEdge, \\
    -b(\vqd, \wytestd) &+ \langle \tau (\vyd - \vldD), \wytestd \rangle_\setEdge &&+ (\del_t \vzd, \wytestd)_\Sigma &&= (\vf, \wytestd)_\Sigma, \\
    &- (\del_t \vyd, \wztestd)_\Sigma &&+ d(\vzd, \wztestd) &&= 0, \\
    &&&&\mathllap{-\langle \vqd \nu + \tau(\vyd - \vldD), \bvec\mu \rangle_\setEdge} &= 0
\end{alignat*}
 holds for almost every $t \in (0,T)$ and all test functions $\wqtestd, \wytestd, \wztestd \in \spoly$ and $\bvec\mu \in \vec\Lambda$.  
 
To formulate the Crank--Nicolson time stepping scheme, we introduce some notation: for a time-dependent function $\vec g$ and an integer $k$, we write $\vec g^k \coloneqq \vec g(t^k)$ whenever pointwise evaluation is well defined. Furthermore, for any integer-indexed family $(\vec y^k)_{k=0}^K$, in particular for the samples $(\vec g^k)_{k=0}^K$ of a time-dependent function, we define the midpoint average and discrete time derivative by
\begin{align*}
    \vy^{k-\frac12} \coloneqq \tfrac12 (\vy^k + \vy^{k-1}), \qquad
    D_t^{k-\frac12} \vy \coloneqq \frac{\vy^{k} - \vy^{k-1}}{\Delta t}.
\end{align*}

In addition to the initial values $\vyd^0$ and $\vzd^0$, the initial dual and hybrid variables $(\vqd^0,\vldh^0)$, with $\vldD^0 \coloneqq \lambdaD(0) + \vldh^0$, must also be computed. They are obtained from the initial constitutive relation and balance condition as follows:
\begin{align*}
  a(\vqd^0,\wqtestd) + b(\wqtestd,\vyd^0)
  &= -\langle \vldD^0,\wqtestd\nu\rangle_\setEdge
  \quad &&\forall \wqtestd\in\spoly,
  \\
  -\langle \vqd^0\nu + \tau(\vyd^0-\vldD^0), \bvec\mu\rangle_\setEdge
  &= 0
  \quad &&\forall \bvec\mu\in\vLambda.
\end{align*}
This system is uniquely solvable by the same static condensation and elimination arguments used in the proof of \cref{lem:sd_well}.

The Crank--Nicolson scheme yields the following discrete problem at time level~$t^k$: seek $\vqd^k, \vyd^k, \vzd^k \in \spoly$ and $\vldh^k \in \vLambda$, setting $\vldD^k \coloneqq \lambdaD(t^k) + \vldh^k$, satisfying
\begin{widedisplay}[36pt]
\begin{subequations}\label{eq:fd_midpoint}
\begin{alignat}{3}
  a(\vqd^k,\wqtestd) +{}& b(\wqtestd,\vyd^k) && &&= -\langle \vldD^k,\wqtestd\nu \rangle_\setEdge, \label{eq:fd1} \\
  -b(\vqd^{k-\frac12},\wytestd)
  +{}& \langle \tau (\vyd^{k-\frac12}-\vldD^{k-\frac12}),\wytestd \rangle_\setEdge &&+ (D_t^{k-\frac12} \vzd,\wytestd)_\Sigma &&= (\vf^{k-\frac12},\wytestd)_\Sigma, \label{eq:fd2} \\
  -&(D_t^{k-\frac12} \vyd, \wztestd)_\Sigma &&+ d(\vzd^{k-\frac12},\wztestd) &&= 0, \label{eq:fd3} \\
  -&\mathrlap{\langle \vqd^k\nu + \tau(\vyd^k-\vldD^k), \bvec\mu \rangle_\setEdge} && &&= 0 \label{eq:fd_balance}
 \end{alignat}
\end{subequations}
\end{widedisplay}
for all test functions $\wqtestd, \wytestd, \wztestd \in \spoly$ and $\bvec\mu \in \vLambda$.

Multiplying the second and third equations by two and moving all terms that depend on the previous time step to the right-hand side gives the equivalent formulation:\begin{subequations}\label{eq:saddle_fully_discrete}
\begin{alignat}{3}
  a(\vqd^k,\wqtestd) +{}& b(\wqtestd,\vyd^k) && &&= -\langle \vldD^k,\wqtestd\nu \rangle_\setEdge, \label{eq:saddle_fully_discrete-a}\\
  -b(\vqd^k,\wytestd)
  +{}& \langle \tau (\vyd^k-\vldD^k),\wytestd \rangle_\setEdge &&+ \tDti(\vzd^k,\wytestd)_\Sigma &&= (\vf^k,\wytestd)_\Sigma + \elly{k-1}(\wytestd), \label{eq:saddle_fully_discrete-b}\\
   -&\tDti(\vyd^k, \wztestd)_\Sigma &&+ d(\vzd^k,\wztestd) &&= \ellz{k-1}(\wztestd), \label{eq:saddle_fully_discrete-c}\\
   -&\mathrlap{\langle \vqd^k\nu + \tau(\vyd^k-\vldD^k), \bvec\mu \rangle_\setEdge} && &&= 0\label{eq:saddle_fully_discrete-d}
\end{alignat}
\end{subequations}
with the linear functionals $\elly{k-1}$ and $\ellz{k-1}$:
\begin{align*}
\elly{k-1}(\wytestd) &\coloneqq
(\vf^{k-1},\wytestd)_\Sigma
+ b(\vqd^{k-1},\wytestd) 
- \langle \tau(\vyd^{k-1}-\vldD^{k-1}),\wytestd\rangle_\setEdge
+ \tfrac{2}{\Delta t}(\vzd^{k-1},\wytestd)_\Sigma,
\\
\ellz{k-1}(\wztestd)
&\coloneqq
-d(\vzd^{k-1},\wztestd)
-\tfrac{2}{\Delta t}(\vyd^{k-1},\wztestd)_\Sigma.
\end{align*}

In contrast to the semi-discrete setting of \cref{lem:sd_well}, the well-posedness of the fully discrete system~\cref{eq:saddle_fully_discrete} at each time step follows from a purely algebraic argument, as established in the following lemma.
\begin{lemma}[Fully discrete well-posedness]\label{lem:fd_well}
  Let 
  $(\vqd^{k-1},\vyd^{k-1},\vzd^{k-1},\vldD^{k-1})$
  be given at the previous time level. Then, the fully discrete system \eqref{eq:saddle_fully_discrete}
  admits a unique solution $\vqd^k,\vyd^k,\vzd^k \in \spoly$, $\vldh^k\in\vLambda$.
\end{lemma}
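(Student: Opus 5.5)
Since \cref{eq:saddle_fully_discrete} is a square linear system in the finite-dimensional unknowns $(\vqd^k,\vyd^k,\vzd^k,\vldh^k)\in\spoly\times\spoly\times\spoly\times\vLambda$, and the data from level $k-1$, $\vf^k$ and $\lambdaD(t^k)$ enter only the right-hand side, it suffices to show uniqueness. So I will assume homogeneous data, i.e.\ $\lambdaD(t^k)=\vec 0$ (so $\vldD^k=\vldh^k$), $\vf^k=\vec 0$ and $\elly{k-1}=\ellz{k-1}=0$, and show that the solution is trivial.

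The plan is an energy argument that mimics the derivation of \cref{eq:energypreservation_sd}. Test \cref{eq:saddle_fully_discrete-a} with $\wqtestd=\vqd^k$ and \cref{eq:saddle_fully_discrete-b} with $\wytestd=\vyd^k$, and add; the $b$-terms cancel. Then test \cref{eq:saddle_fully_discrete-d} with $\bvec\mu=\vldh^k$, which is admissible since $\vldh^k\in\vLambda$, giving $\langle\vqd^k\nu,\vldh^k\rangle_\setEdge=-\langle\tau(\vyd^k-\vldh^k),\vldh^k\rangle_\setEdge$. Substituting this into the term $-\langle\vldh^k,\vqd^k\nu\rangle_\setEdge$ and combining the $\tau$-terms should yield
\[
a(\vqd^k,\vqd^k)+\|\tau^{1/2}(\vyd^k-\vldh^k)\|_\setEdge^2+\tDti(\vzd^k,\vyd^k)_\Sigma=0 .
\]
Next, test \cref{eq:saddle_fully_discrete-c} with $\wztestd=\vzd^k$ to obtain $\tDti(\vyd^k,\vzd^k)_\Sigma=d(\vzd^k,\vzd^k)$. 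Substituting this gives a sum of three nonnegative terms equal to zero, since $a$ and $d$ are coercive by \cref{eq:boundCnm} and $\tau>0$. Hence $\vqd^k=\vec 0$, $\vzd^k=\vec 0$, and $\vyd^k=\vldh^k$ at every incidence $(\edge,\node)\in\mathcal I$.

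It then remains to conclude that $\vyd^k=\vec 0$ and $\vldh^k=\vec 0$. Testing \cref{eq:saddle_fully_discrete-c} with $\wztestd=\vyd^k$ and using $\vzd^k=\vec 0$ gives $\|\vyd^k\|_\Sigma^2=0$. The nodal traces of $\vyd^k$ therefore vanish, and so $\vldh^k$ vanishes at every node with an incident edge, which is every node because $\graph$ is connected (the non-Dirichlet part is zero by definition of $\vLambda$). The main point requiring care is the sign bookkeeping in the first identity, namely making sure that the boundary terms from \cref{eq:saddle_fully_discrete-a} and \cref{eq:saddle_fully_discrete-d} combine into the single square $\|\tau^{1/2}(\vyd^k-\vldh^k)\|_\setEdge^2$. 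This parallels the positivity of $B$ in the proof of \cref{lem:sd_well}, and it could alternatively be settled by static condensation as done there.
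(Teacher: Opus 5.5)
Your proof is correct and follows the paper's route: reduce to the homogeneous problem, then test the four equations with $\vqd^k,\vyd^k,\vzd^k,\vldh^k$ to obtain $a(\vqd^k,\vqd^k)+d(\vzd^k,\vzd^k)+\|\tau^{1/2}(\vyd^k-\vldh^k)\|_\setEdge^2=0$ (your sign bookkeeping checks out), and conclude $\vyd^k=\vec 0$ from \cref{eq:saddle_fully_discrete-c} and $\vldh^k=\vec 0$ from the stabilization term with $\tau>0$. One wording slip in your parenthetical: it is the values at the Dirichlet nodes, not the non-Dirichlet part, that vanish by definition of $\vLambda$.
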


\begin{proof}
The system is a square linear system on the finite-dimensional space $\spoly\times\spoly\times\spoly\times\vLambda$, so it suffices to show that the homogeneous problem admits only the trivial solution.
Testing the four equations with the respective unknowns $\vqd^k$, $\vyd^k$, $\vzd^k$ and $\vldh^k$ and adding them
leaves only the discrete energy \eqref{eq:disc_energy} of the solution,
while the right-hand side vanishes by the homogeneity of the data,
yielding
\begin{align*}
    a(\vqd^k,\vqd^k) + d(\vzd^k,\vzd^k) + \|\tau^{1/2}(\vyd^k-\vldh^k)\|_\setEdge^2 = 0.
\end{align*}
By the coercivity of $a$ and $d$, this yields $\vqd^k = \vec 0$ and $\vzd^k = \vec 0$, whence \cref{eq:saddle_fully_discrete-c} gives $\vyd^k=\vec 0$.
The remaining term then yields $\vldh^k = \vec 0$, since $\tau > 0$ and every node has at least one incident edge by the connectedness of the graph.
\end{proof}

\begin{remark}[Non-uniform time step]
This method extends naturally to non-uniform time steps, upon replacing $\Delta t$ by $\Delta t_k \coloneqq t^k-t^{k-1}$ in the definitions above.
\end{remark}

\subsection{Fully discrete energy conservation}\label{sec:energy_cons}

We define the fully discrete energy
\begin{align}\label{eq:disc_energy}
  \mathfrak{E}_d[\vqd, \vzd, \vyd, \vldD]
  \coloneqq \tfrac12 a(\vqd,\vqd) + \tfrac12 d(\vzd,\vzd)
  + \tfrac12 \|\tau^{1/2}(\vyd - \vldD)\|_\setEdge^2.
\end{align}

As shown in the following lemma, the Crank--Nicolson scheme directly inherits the energy-preserving structure of the semi-discrete formulation.

\begin{lemma}[Fully discrete energy identity]\label{lem:disc_energy}
Let $(\vqd^k, \vyd^k, \vzd^k, \vldD^k)_k$ solve \eqref{eq:fd_midpoint} with the right-hand sides of \eqref{eq:fd2} and \eqref{eq:fd3} replaced by $(\vec g_\mathrm{y}^{k-\frac12}, \wytestd)_\Sigma$ and $(\vec g_\mathrm{z}^{k-\frac12}, \wztestd)_\Sigma$, respectively (the scheme itself is recovered for $\vec g_\mathrm{y} = \vf$ and $\vec g_\mathrm{z} = \vec 0$). Then, with
\begin{align*}
    \mathfrak E_d^k\coloneqq\mathfrak E_d[\vqd^k,\vzd^k,\vyd^k,\vldD^k],
\end{align*}
we obtain the discrete energy identity
\begin{align} \label{eq:disc_energy_id}
\begin{split}
\mathfrak E_d^k-\mathfrak E_d^{k-1}
&= \Delta t\,(\vec g_\mathrm y^{k-\frac12},D_t^{k-\frac12}\vyd)_\Sigma
+ \Delta t\,(\vec g_\mathrm z^{k-\frac12},D_t^{k-\frac12}\vzd)_\Sigma\\
&\phantom{=}
-\Delta t \langle \vqd^{k-\frac12}\nu +\tau(\vyd^{k-\frac12}-\vldD^{k-\frac12}), D_t^{k-\frac12}\lambdaD \rangle_{\setEdge}.
\end{split}
\end{align}
\end{lemma}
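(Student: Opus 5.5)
We mimic the semi-discrete energy argument at the discrete level, testing each equation of \eqref{eq:fd_midpoint} with a discrete time derivative or midpoint value so that every term becomes a telescoping difference of the quadratic forms in $\mathfrak E_d$. We test \eqref{eq:fd2} with $\wytestd = \Delta t\, D_t^{k-\frac12}\vyd$ and \eqref{eq:fd3} with $\wztestd = \Delta t\, \vzd^{k-\frac12}$. Since $\vzd$ and $\vyd$ lie in $\spoly$, both choices are admissible. Adding the two tested equations cancels the cross terms $(D_t^{k-\frac12}\vzd, D_t^{k-\frac12}\vyd)_\Sigma$. A subtlety is that \eqref{eq:fd3} tested with $\vzd^{k-\frac12}$ produces $d(\vzd^{k-\frac12},\vzd^{k-\frac12})$ rather than a difference. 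We therefore instead test \eqref{eq:fd2} with $\Delta t\,d$-Riesz representatives and \eqref{eq:fd3} with $\Delta t\, D_t^{k-\frac12}\vzd$, which gives $d(\vzd^{k-\frac12}, \vzd^k-\vzd^{k-1}) = \tfrac12 d(\vzd^k,\vzd^k)-\tfrac12 d(\vzd^{k-1},\vzd^{k-1})$. This pairs correctly with \eqref{eq:fd2} tested by $\Delta t\,\vyd^{k-\frac12}$? We would fix the pairing by checking which combination cancels: testing \eqref{eq:fd3} with $\Delta t D_t^{k-\frac12}\vzd$ yields $-\Delta t(D_t\vyd, D_t\vzd)$, and testing \eqref{eq:fd2} with $\Delta t D_t^{k-\frac12}\vyd$ yields $+\Delta t(D_t\vzd, D_t\vyd)$, so these cancel. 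The $\vec g_\mathrm z$ and $\vec g_\mathrm y$ terms then appear exactly as in \eqref{eq:disc_energy_id}.

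The remaining terms from \eqref{eq:fd2} are
\[
-\Delta t\, b(\vqd^{k-\frac12}, D_t^{k-\frac12}\vyd)
+\Delta t\,\langle \tau(\vyd^{k-\frac12}-\vldD^{k-\frac12}), D_t^{k-\frac12}\vyd\rangle_\setEdge .
\]
For the $b$-term, we take the difference of \eqref{eq:fd1} at levels $k$ and $k-1$; this is valid because \eqref{eq:fd1} holds at every level, including $k=0$ by the initialization. We test this difference with $\wqtestd=\vqd^{k-\frac12}$. This gives
\[
\tfrac12 a(\vqd^k,\vqd^k)-\tfrac12 a(\vqd^{k-1},\vqd^{k-1}) + \Delta t\, b(\vqd^{k-\frac12}, D_t\vyd) = -\Delta t\langle D_t\vldD, \vqd^{k-\frac12}\nu\rangle_\setEdge ,
\]
which eliminates the $b$-term.

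For the stabilization term, we write $D_t\vyd = D_t(\vyd-\vldD) + D_t\vldD$. The first part yields the telescoping difference $\tfrac12\|\tau^{1/2}(\vyd^k-\vldD^k)\|^2_\setEdge - \tfrac12\|\tau^{1/2}(\vyd^{k-1}-\vldD^{k-1})\|^2_\setEdge$. The second part combines with the term $-\Delta t\langle D_t\vldD, \vqd^{k-\frac12}\nu\rangle$ into
\[
-\Delta t\langle \vqd^{k-\frac12}\nu+\tau(\vyd^{k-\frac12}-\vldD^{k-\frac12}), D_t\vldD\rangle_\setEdge .
\]
Finally, we split $D_t\vldD = D_t\vldh + D_t\lambdaD$. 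The $\vldh$ part vanishes by averaging \eqref{eq:fd_balance} at levels $k$ and $k-1$, using the test function $\bvec\mu = D_t^{k-\frac12}\vldh\in\vLambda$. Here linearity makes the averaged numerical flux vanish, and \eqref{eq:fd_balance} must hold at level $k-1$; for $k=1$ this is guaranteed by the initial balance condition. What remains is precisely the $\lambdaD$ term in \eqref{eq:disc_energy_id}.

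The main obstacle is bookkeeping: choosing the correct test functions so that the cross terms cancel, and ensuring the algebraic constraints \eqref{eq:fd1} and \eqref{eq:fd_balance} hold at both endpoints of each step, which requires the initialization of $(\vqd^0,\vldh^0)$.
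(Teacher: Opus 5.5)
Your proposal is correct and follows the paper's proof. You test the differenced constitutive equation \eqref{eq:fd1} with $\vqd^{k-\frac12}$, \eqref{eq:fd2} with $D_t^{k-\frac12}\vyd$ and \eqref{eq:fd3} with $D_t^{k-\frac12}\vzd$, split $D_t^{k-\frac12}\vyd = D_t^{k-\frac12}(\vyd-\vldD) + D_t^{k-\frac12}\vldD$ in the stabilization term, and remove the $D_t^{k-\frac12}\vldh$ contribution via the averaged balance condition \eqref{eq:fd_balance}, exactly as the paper does. Your remark that \eqref{eq:fd1} and \eqref{eq:fd_balance} must also hold at level $k-1$, and hence at $k=0$ through the initialization, is a point the paper leaves implicit. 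You should delete the false start with $\vzd^{k-\frac12}$ and ``$d$-Riesz representatives'': your final choice of test functions is the right one, and the rest of the argument uses only that choice.
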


\begin{proof}
We test the difference quotient of \eqref{eq:fd1} at the time levels $k$ and $k-1$ with $\wqtestd = \vqd^{k-\frac12}$, equation \eqref{eq:fd2} with $\wytestd=D_t^{k-\frac12}\vyd$, and \eqref{eq:fd3} with $\wztestd=D_t^{k-\frac12}\vzd$.
Adding the three identities cancels the terms involving $b$ and $(D_t^{k-\frac12}\vzd,D_t^{k-\frac12}\vyd)_\Sigma$. Subtracting $\langle \tau(\vyd^{k-\frac12}-\vldD^{k-\frac12}), D_t^{k-\frac12}\vldD\rangle_\setEdge$ on both sides yields
\begin{align*}
  a(D_t^{k-\frac12}\vqd,\vqd^{k-\frac12})
  &+ d(D_t^{k-\frac12}\vzd,\vzd^{k-\frac12})
  + \langle \tau(\vyd^{k-\frac12}-\vldD^{k-\frac12}), D_t^{k-\frac12}(\vyd-\vldD)\rangle_\setEdge\\
  &= (\vec g_\mathrm y^{k-\frac12},D_t^{k-\frac12}\vyd)_\Sigma
  + (\vec g_\mathrm z^{k-\frac12},D_t^{k-\frac12}\vzd)_\Sigma\\
  &\phantom{={}}
  - \langle \vqd^{k-\frac12}\nu + \tau(\vyd^{k-\frac12}-\vldD^{k-\frac12}), D_t^{k-\frac12}\vldD\rangle_\setEdge.
\end{align*}
Averaging \eqref{eq:fd_balance} at the time levels $k$ and $k-1$ and testing it with $\bvec\mu = D_t^{k-\frac12}\vldh\in\vLambda$ shows that only the Dirichlet part $D_t^{k-\frac12}\lambdaD$ of $D_t^{k-\frac12}\vldD = D_t^{k-\frac12}\lambdaD + D_t^{k-\frac12}\vldh$ contributes to the last term.
The identity
\[g(D_t^{k-\frac12}\bvec v,\bvec v^{k-\frac12})=\tfrac1{2\Delta t}\big(g(\bvec v^k,\bvec v^k)-g(\bvec v^{k-1},\bvec v^{k-1})\big)\]
holds for any symmetric bilinear form $g$. Applying it to $a$ with $\bvec v=\vqd$, to $d$ with $\bvec v=\vzd$, and to $\langle\tau\,\cdot,\cdot\rangle_\setEdge$ with $\bvec v = \vyd-\vldD$ then yields the desired discrete energy identity \eqref{eq:disc_energy_id}.
\end{proof}

As a consequence of \cref{lem:disc_energy}, the fully discrete Crank--Nicolson scheme conserves energy in the case of vanishing external forces (\(\vf=0\)) and time-independent Dirichlet data (\(D_t^{k-\frac12}\lambdaD = 0\)). In this case, we obtain
\begin{align*}
    \mathfrak E_d^k=\mathfrak E_d^{k-1}
    \qquad \forall  k\ge 1.
\end{align*}

This is the fully discrete counterpart of the energy conservation on the semi-discrete level, cf.~\eqref{eq:energypreservation_sd}.
It reflects the general fact that, for linear autonomous problems, the Crank--Nicolson scheme
coincides with the implicit midpoint rule, i.e., the one-stage Gauss collocation Runge--Kutta method,
which conserves all quadratic invariants exactly;
see, for example, \citep[Thm.~IV.2.1--2.2]{HairerLubichWanner06}.

\subsection{Static condensation}
Given the hybrid unknown $\vldD^k$, we introduce local solvers to reconstruct the interior state variables $(\vqd^k, \vyd^k, \vzd^k)$ inside each edge:
\begin{alignat*}{2} \vec Q^k&\colon (\vldD^k, \vf^k, \elly{k-1}, \ellz{k-1}) &&\mapsto \vqd^k, \\
\vec Y^k&\colon (\vldD^k, \vf^k, \elly{k-1}, \ellz{k-1}) &&\mapsto \vyd^k, \\
\vec Z^k&\colon (\vldD^k, \vf^k, \elly{k-1}, \ellz{k-1}) &&\mapsto \vzd^k.
\end{alignat*}
The local solvers are well defined and operate locally on each edge.
Indeed, for given data, the first three equations of \eqref{eq:saddle_fully_discrete} decouple into independent edgewise problems, because all involved bilinear forms are edgewise, and each of these problems is uniquely solvable by the argument in the proof of \cref{lem:fd_well}, applied on a single edge with $\vldD^k$ prescribed at both endpoints.
By linearity of the problem, splitting the Dirichlet data, external forcing, and data from previous time steps from the hybrid unknown yields
\begin{align*}
    (\vldD^k, \vf^k, \elly{k-1}, \ellz{k-1})
    &= (\vldh^k, \vec 0, 0, 0)
     + (\lambdaD^k, \vf^k, \elly{k-1}, \ellz{k-1}),\\
    \vec Q^k(\vldD^k, \vf^k, \elly{k-1}, \ellz{k-1})
    &= \vec Q^k(\vldh^k)
    + \vec Q_\mathrm{D}^k.
\end{align*}
Analogous notation is introduced for $\vec Y^k$ and $\vec Z^k$.

Substituting the local solvers into the equilibrium condition \eqref{eq:fd_balance} yields the fully discrete condensed formulation: at each time step $t^k$, find $\vldh^k\in \vLambda$ such that
\begin{align} \label{eq:global_f_d}
    \hat A(\vldh^k,\bvec\mu) = \hat F^k(\bvec\mu)
    \qquad \forall \bvec\mu\in \vLambda,
\end{align}
where
\begin{align}
  \hat A(\vldh^k,\bvec\mu)
    &\coloneqq -\langle
    {\vec Q}^k(\vldh^k)\,\nu + \tau({\vec Y}^k(\vldh^k)-\vldh^k)
  , \bvec\mu\rangle_\setEdge,\label{eq:Ahat}\\
  \hat F^k(\bvec\mu)
  &\coloneqq \langle {\vec Q}_\mathrm{D}^k\,\nu + \tau {\vec Y}_\mathrm{D}^k, \bvec\mu\rangle_\setEdge.
\end{align}

Although its definition \cref{eq:Ahat} is not manifestly symmetric, the bilinear form $\hat A$ can be rewritten in terms of the local solvers alone, which reveals it to be symmetric positive definite.
\begin{lemma}[Properties of the fully discrete condensed HDG problem]\label{lem:fd_glob}
The bilinear form $\hat A$ admits the equivalent representation
\begin{alignat}{1}
	\label{eq:refcondensed}
\begin{split}
\hat A(\vldh^k, \bvec \mu) 
&= 
a(\vec Q^k(\vldh^k),\vec Q^k(\bvec \mu))
 + d(\vec Z^k(\vldh^k), \vec Z^k(\bvec \mu))
\\
&\qquad 
+\langle \tau (\vec Y^k(\vldh^k)-\vldh^k),
\vec Y^k(\bvec \mu)-\bvec \mu
\rangle_\setEdge.
\end{split}
\end{alignat}
\end{lemma}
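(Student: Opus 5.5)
The plan is to use the defining equations of the homogeneous local solvers, i.e. the first three equations of \cref{eq:saddle_fully_discrete} with $\vldD^k$ replaced by $\bvec\lambda\in\vLambda$ and all data ($\vf^k$, $\elly{k-1}$, $\ellz{k-1}$, $\lambdaD^k$) set to zero. Write $\bvec q_\lambda=\vec Q^k(\bvec\lambda)$, $\bvec y_\lambda=\vec Y^k(\bvec\lambda)$, $\bvec z_\lambda=\vec Z^k(\bvec\lambda)$, and similarly with subscript $\mu$ for $\bvec\mu$. These satisfy, for all $\wqtestd,\wytestd,\wztestd\in\spoly$,
\begin{align*}
a(\bvec q_\lambda,\wqtestd)+b(\wqtestd,\bvec y_\lambda)&=-\langle\bvec\lambda,\wqtestd\nu\rangle_\setEdge,\\
-b(\bvec q_\lambda,\wytestd)+\langle\tau(\bvec y_\lambda-\bvec\lambda),\wytestd\rangle_\setEdge+\tDti(\bvec z_\lambda,\wytestd)_\Sigma&=0,\\
-\tDti(\bvec y_\lambda,\wztestd)_\Sigma+d(\bvec z_\lambda,\wztestd)&=0.
\end{align*}

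The key step is to expand $\hat A(\bvec\lambda,\bvec\mu)=-\langle\bvec q_\lambda\nu,\bvec\mu\rangle_\setEdge-\langle\tau(\bvec y_\lambda-\bvec\lambda),\bvec\mu\rangle_\setEdge$ and rewrite each piece via these equations tested with the $\mu$-solutions. First, the equation for $\bvec q_\mu$, tested with $\wqtestd=\bvec q_\lambda$, gives
\[
-\langle\bvec\mu,\bvec q_\lambda\nu\rangle_\setEdge=a(\bvec q_\mu,\bvec q_\lambda)+b(\bvec q_\lambda,\bvec y_\mu).
\]
Next, the second $\lambda$-equation tested with $\wytestd=\bvec y_\mu$ gives
\[
b(\bvec q_\lambda,\bvec y_\mu)=\langle\tau(\bvec y_\lambda-\bvec\lambda),\bvec y_\mu\rangle_\setEdge+\tDti(\bvec z_\lambda,\bvec y_\mu)_\Sigma.
\]
Finally, the third $\mu$-equation tested with $\wztestd=\bvec z_\lambda$ gives
\[
\tDti(\bvec y_\mu,\bvec z_\lambda)_\Sigma=d(\bvec z_\mu,\bvec z_\lambda).
\]
Substituting these three identities in turn yields
\[
\hat A(\bvec\lambda,\bvec\mu)=a(\bvec q_\lambda,\bvec q_\mu)+d(\bvec z_\lambda,\bvec z_\mu)+\langle\tau(\bvec y_\lambda-\bvec\lambda),\bvec y_\mu\rangle_\setEdge-\langle\tau(\bvec y_\lambda-\bvec\lambda),\bvec\mu\rangle_\setEdge,
\]
and combining the last two terms gives exactly \cref{eq:refcondensed}. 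This uses the symmetry of $a$, $d$ and of $(\cdot,\cdot)_\Sigma$.

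The main obstacle is bookkeeping rather than any analytic difficulty. One must check that the homogeneous local solvers really satisfy the equations above, i.e. that the splitting into $\vec Q^k(\vldh^k)+\vec Q^k_\mathrm D$ puts all data into the D-part. One must also keep track of signs, in particular the sign convention $b(\wqtestd,\vyd)$ versus $-b(\vqd,\wytestd)$. A further point is that $\langle\cdot,\cdot\rangle_\setEdge$ pairs edge traces with the single-valued $\bvec\mu$, so no use of \cref{eq:EtoN} is needed.

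Symmetry of $\hat A$ is then evident from \cref{eq:refcondensed}. Positive definiteness also follows from it. If $\hat A(\bvec\lambda,\bvec\lambda)=0$, then $\bvec q_\lambda=\bvec z_\lambda=\vec 0$, the third equation forces $\bvec y_\lambda=\vec 0$, and then $\langle\tau\bvec\lambda,\bvec\lambda\rangle_\setEdge=0$ with $\tau>0$ gives $\bvec\lambda=\vec 0$, exactly as in \cref{lem:fd_well}.
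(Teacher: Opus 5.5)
Your proof is correct and follows the paper's own proof essentially verbatim. It uses exactly the same three test-function choices (the constitutive equation for datum $\bvec\mu$ tested with $\vec Q^k(\vldh^k)$, the momentum equation for datum $\vldh^k$ tested with $\vec Y^k(\bvec\mu)$, and the auxiliary equation for datum $\bvec\mu$ tested with $\vec Z^k(\vldh^k)$), followed by the same substitution and merging of the two stabilization terms, and your closing positive-definiteness argument is the one from \cref{lem:fd_well}.
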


\begin{proof}
By definition, the triple $(\vec Q^k(\bvec\mu),\vec Y^k(\bvec\mu),\vec Z^k(\bvec\mu))$ solves \cref{eq:saddle_fully_discrete-a}--\cref{eq:saddle_fully_discrete-c}  with~$\vldD^k$ replaced by $\bvec\mu$ and vanishing source terms ($\vf^k = \vec 0,\, \elly{k-1} = 0,\, \ellz{k-1} = 0$). 
Testing the first equation (for datum $\bvec\mu$) with $\wqtestd = \vec Q^k(\vldh^k)$, the second (for datum~$\vldh^k$) with $\wytestd = \vec Y^k(\bvec\mu)$, and the third (for datum $\bvec\mu$) with $\wztestd = \vec Z^k(\vldh^k)$ yields
\begin{alignat*}{3}
    &\phantom{{}-{}} a(\vec Q^k(\bvec \mu),\vec Q^k(\vldh^k)) &&+ b(\vec Q^k(\vldh^k), \vec Y^k(\bvec \mu))
    &&= -\langle \bvec \mu, \vec Q^k(\vldh^k)\,\nu \rangle_\setEdge, \\
    &- b(\vec Q^k(\vldh^k),\vec Y^k(\bvec \mu))
    &&+ \tDti(\vec Z^k(\vldh^k),\vec Y^k(\bvec\mu))_\Sigma
    &&= -\langle\tau(\vec Y^k(\vldh^k)-\vldh^k), \vec Y^k(\bvec \mu)\rangle_\setEdge, \\
    &-\tDti(\vec Z^k(\vldh^k),\vec Y^k(\bvec\mu))_\Sigma &&+d(\vec Z^k(\vldh^k),\vec Z^k(\bvec\mu)) &&= 0.
\end{alignat*}
Adding the first two identities and inserting the third one, we rearrange
and subtract $\langle \tau (\vec Y^k(\vldh^k)-\vldh^k),\bvec \mu \rangle_\setEdge$ on both sides,
which yields \cref{eq:refcondensed}.
\end{proof}

\subsection{Fully discrete error analysis}\label{sec:error_analysis}
The following error analysis for the fully discrete scheme relies on the $L^\infty$-error estimate of the semi-discrete formulation and standard energy techniques for the Crank--Nicolson scheme.

\begin{theorem}[Fully discrete convergence]\label{thm:fully_d}
Assume that the exact solution satisfies
\begin{align*}
  \vq \in C^{4}([0,T];\mathbf{H}_\mathrm{pw}^{p+1}(\Sigma)), \quad
  \vy \in C^{4}([0,T];\mathbf{H}^{p+1}(\Sigma)).
\end{align*}
Let the stabilization parameter satisfy $\tau_\edge \simeq h_\edge^s$ for $s\in\{-1,0,1\}$ and all edges $\edge\in\Edges$.
Then,
the fully discrete HDG approximation $(\bar{\vy}^{k},\bar{\vq}^{k})$ at time $t^k$ converges to the exact solution with the error estimate
\begin{align*}
\max_{0\le k\le K}
(\|\vec q(\cdot,t^k)-\bar{\vec q}^{k}\|_\Sigma + \|\vy(\cdot,t^k)-\bar{\vy}^{k}\|_\Sigma)
&\lesssim h^{p+1-|s|} + (\Delta t)^2.
\end{align*}
The hidden constants depend on the coefficient bounds, the final time $T$ and the regularity of the exact solution, but are independent of $h$ and $\Delta t$.
\end{theorem}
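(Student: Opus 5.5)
We split the error at $t^k$ into a semi-discrete part and a time-discretization part,
\begin{align*}
  \vq(t^k)-\vqd^k = \bigl(\vq(t^k)-\vqd(t^k)\bigr) + \bigl(\vqd(t^k)-\vqd^k\bigr),
  \qquad
  \vy(t^k)-\vyd^k = \bigl(\vy(t^k)-\vyd(t^k)\bigr) + \bigl(\vyd(t^k)-\vyd^k\bigr),
\end{align*}
where $(\vqd,\vyd,\vldh)$ is the semi-discrete solution of \cref{eq:sd_saddle}, rewritten in the equivalent first-order form with $\vzd=\Cy\del_t\vyd$. The first part is bounded by $h^{p+1-|s|}$ uniformly in time by \cref{thm:semi_d_err}. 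Its hypotheses follow from the $C^4$ assumptions. The initial data of the first-order scheme coincide with those of the semi-discrete problem: $\vzd_0=\Cy\bvec y_1$ because $\Cy$ is edgewise constant, and $(\vqd^0,\vldh^0)$ is determined by the same constraint equations. It therefore remains to show $\max_k(\|\vqd(t^k)-\vqd^k\|_\Sigma+\|\vyd(t^k)-\vyd^k\|_\Sigma)\lesssim(\Delta t)^2$ with a constant independent of $h$.

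For the second part we define the discrete errors $\bvec e_\mathrm q^k\coloneqq\vqd(t^k)-\vqd^k$, and analogously $\bvec e_\mathrm y^k,\bvec e_\mathrm z^k,\bvec e_\lambda^k$. We insert the semi-discrete solution sampled at $t^k$ into \cref{eq:fd_midpoint}. The algebraic equations \cref{eq:fd1} and \cref{eq:fd_balance} hold exactly at each $t^k$, and hence also for midpoint averages. Equations \cref{eq:fd2} and \cref{eq:fd3} hold up to consistency defects. Since \cref{eq:fd1}, \cref{eq:fd_balance} and the bilinear forms in \cref{eq:fd2} are linear and time independent, averaging the semi-discrete equations at $t^k$ and $t^{k-1}$ gives
\begin{align*}
  \vec g_\mathrm y^{k-\frac12} &= \Bigl(D_t^{k-\frac12}\vzd-\tfrac12\bigl(\del_t\vzd(t^k)+\del_t\vzd(t^{k-1})\bigr)\Bigr)+\Bigl(\vf^{k-\frac12}-\tfrac12(\vf^k+\vf^{k-1})\Bigr),\\
  \vec g_\mathrm z^{k-\frac12} &= \Cy^{-1}\Bigl(D_t^{k-\frac12}\vyd-\tfrac12\bigl(\del_t\vyd(t^k)+\del_t\vyd(t^{k-1})\bigr)\Bigr),
\end{align*}
up to $L^2$-projections onto $\spoly$. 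If $\vf^{k-\frac12}$ is read as the average, the forcing term vanishes. The error quadruple then solves \cref{eq:fd_midpoint} with these defects as right-hand sides and with zero Dirichlet data. \Cref{lem:disc_energy} applies with $D_t^{k-\frac12}\lambdaD=0$, because the Dirichlet data enter both solutions identically.

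By Taylor expansion with integral remainder (trapezoidal rule error), $\|\vec g_\bullet^{k-\frac12}\|_\Sigma\lesssim(\Delta t)^2\max_t\|\del_t^3\vzd\|_\Sigma$, respectively $(\Delta t)^2\max_t\|\del_t^3\vyd\|_\Sigma$. The energy identity then has the source terms $\Delta t\,(\vec g_\mathrm y,D_t\bvec e_\mathrm y)+\Delta t\,(\vec g_\mathrm z,D_t\bvec e_\mathrm z)$. These involve difference quotients of the errors, not the errors themselves, which is inconvenient. We avoid this by using the errors' own scheme: testing \cref{eq:fd3} for the errors expresses $D_t\bvec e_\mathrm y$ through $\bvec e_\mathrm z^{k-\frac12}$ plus $\vec g_\mathrm z$. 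Alternatively, one applies \cref{lem:disc_energy} with the tests chosen differently, or uses summation by parts in $k$ with a discrete Gronwall argument. In either case we aim for $\mathfrak E_d^k\le\mathfrak E_d^{k-1}+C\Delta t(\Delta t)^2\sqrt{\mathfrak E_d^{k}+\mathfrak E_d^{k-1}}$. Since $\mathfrak E_d^0=0$, this yields $\sqrt{\mathfrak E_d^k}\lesssim T(\Delta t)^2$. This controls $\|\bvec e_\mathrm q^k\|_\Sigma$ and $\|\bvec e_\mathrm z^k\|_\Sigma$. The bound on $\|\bvec e_\mathrm y^k\|_\Sigma$ follows by summing the error version of \cref{eq:fd3}: $\bvec e_\mathrm y^k=\bvec e_\mathrm y^{k-1}+\Delta t\,(\Pi\Cy^{-1}\bvec e_\mathrm z^{k-\frac12}-\vec g_\mathrm z^{k-\frac12})$, which telescopes to $O(T(\Delta t)^2)$.

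The main obstacle is to make the time constants independent of $h$. The defects involve $\del_t^3\vyd$ and $\del_t^3\vzd=\Cy\del_t^4\vyd$, that is, time derivatives of the semi-discrete solution up to order four. These must be bounded uniformly in $h$ by norms of the exact solution. To do so, we apply \cref{thm:semi_d_err}, or rather its proof, to the time-differentiated problems. The semi-discrete system is linear with time-independent operators, so $\del_t^j\vyd$ is the HDG approximation of $\del_t^j\vy$, and the energy bound gives $\|\del_t^j(\vy-\vyd)\|_\Sigma\lesssim h^{p+1-|s|}$. The $C^4$ regularity of $(\vq,\vy)$ is used exactly here. The delicate point is the initial data of the differentiated problems: the initial time derivatives of $\vyd$ must be close to projections of $\del_t^j\vy(0)$, which requires the semi-discrete equations together with the compatibility conditions. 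Handling this, or alternatively replacing the semi-discrete comparison by a direct comparison with the projection $\vec\Pi(\vq,\vy)(t^k)$ so that the time defects involve only the exact solution, is the step I expect to be hardest. If it fails, I would switch to that direct approach: reuse the error equations of \cref{lem:semid_err_equ} at the time levels, and bound the projection defects via \cref{lem:proj_err} and the time defects via Taylor expansion of $\vq$ and $\vy$.
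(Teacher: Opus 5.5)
Your intermediate goal is where the argument breaks: you aim for $\max_k(\|\vqd(t^k)-\vqd^k\|_\Sigma+\|\vyd(t^k)-\vyd^k\|_\Sigma)\lesssim(\Delta t)^2$ with an $h$-independent constant. Because you take the trapezoidal defects of the semi-discrete solution, you need $\max_t\|\del_t^3\vzd\|_\Sigma=\max_t\|\Cy\del_t^4\vyd\|_\Sigma$ bounded uniformly in $h$. Once the energy argument is done correctly (see below), the increments of the $z$-defect also require $\del_t^4\vyd$.

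\Cref{thm:semi_d_err}, applied to the $j$-th time derivative, requires $\vq,\vy\in W^{j+2,\infty}$ in time. Under the $C^4$ hypothesis it is therefore available only for $j\le 2$. Reaching $\del_t^4\vyd$ needs $j=3$ (through the energy in its proof) or $j=4$, i.e.\ one or two more time derivatives than assumed. So your remark that the $C^4$ regularity is ``used exactly here'' is not correct. In addition, the initial data of these highly differentiated problems come from applying the discrete operator repeatedly to the $L^2$-projected data. Each application costs a power of $h$ through inverse estimates unless the data are specially prepared, so the step you flag as hardest does not go through under the stated assumptions.

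The missing idea is that the temporal part only has to be $\lesssim h^{p+1-|s|}+(\Delta t)^2$, not $(\Delta t)^2$ alone. Write $\vec\rho^{k-\frac12}[\vec v]\coloneqq\tfrac12(\del_t\vec v(t^k)+\del_t\vec v(t^{k-1}))-D_t^{k-\frac12}\vec v$ for the trapezoidal defect and split $\vec\rho^{k-\frac12}[\vyd]=\vec\rho^{k-\frac12}[\vy]+\vec\rho^{k-\frac12}[\vyd-\vy]$, and likewise for $\vzd$ with $\vec z=\Cy\del_t\vy$. Use the sharp bound $(\Delta t)^2\max\|\del_t^3\cdot\|_\Sigma$ only for the exact solution. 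For the semi-discrete error use the crude bound $\|\vec\rho^{k-\frac12}[\vec w]\|_\Sigma\lesssim\max\|\del_t\vec w\|_\Sigma$, which \cref{thm:semi_d_err} with $j=1,2$ makes $O(h^{p+1-|s|})$.

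There is a second problem: the per-step recursion $\mathfrak E_d^k\le\mathfrak E_d^{k-1}+C\Delta t(\Delta t)^2\sqrt{\mathfrak E_d^k+\mathfrak E_d^{k-1}}$ cannot be obtained. Testing \cref{eq:fd3} for the errors only removes $D_t^{k-\frac12}\vec e_y$ from the $\vec g_\mathrm y$ source. The other source, $\Delta t(\vec g_\mathrm z^{k-\frac12},D_t^{k-\frac12}\vec e_z)_\Sigma$, contains $D_t^{k-\frac12}\vec e_z$. By \cref{eq:fd2} this involves $b(\vec e_q^{k-\frac12},\cdot)$, i.e.\ $\delx\vec e_q$, which the energy does not control uniformly in $h$.

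You have to sum the identity of \cref{lem:disc_energy} over $k$ and sum by parts in this term, using $\vec e_z^0=\vec 0$. This gives a global bound $E^2\lesssim RE+R^2$, with $E=\max_k\sqrt{\mathfrak E_d^k}$ and $R$ containing $\max_k\|\vec\rho^{k-\frac12}\|_\Sigma$ and $\sum_k\|\vec\rho^{k+\frac12}[\vyd]-\vec\rho^{k-\frac12}[\vyd]\|_\Sigma$. The increments cost one more time derivative and a factor $\Delta t$. With the splitting above this means $\del_t^4\vy$ and $\del_t^2(\vyd-\vy)$, which is exactly within the $C^4$ assumption.

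Your fallback of comparing directly with $\vec\Pi(\vq,\vy)(t^k)$ might also work. It would need the same summation-by-parts treatment, and as written it is only sketched.
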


\begin{remark}[Regularity requirements]\label{rem:temporal_reg}
The assumed temporal regularity can be proved using a standard semigroup bootstrap argument:
each additional time derivative of the solution requires one more time derivative of the data
together with a compatibility condition on the initial data,
cf.~\citep[\S 2.2.4, Prop.~4.51, Prop.~7.28]{Mugnolo14}.
Since $\del_t^2\vy$ solves the same equation, these conditions amount to higher piecewise spatial regularity and to the requirement that the acceleration field satisfies the same transmission conditions as the displacement.
\end{remark}
\begin{proof}
    The result is obtained by a standard error splitting into spatial and temporal error components, i.e.,
    \begin{align*}
        \|\vec q(\cdot,t^k)-\bar{\vec q}^{k}\|_\Sigma
        \leq
        \|\vec q(\cdot,t^k)-\bvec q(t^k)\|_\Sigma
        + \|\bvec q(t^k)-\bar{\vec q}^{k}\|_\Sigma,
    \end{align*}
and analogously for $\vy$. In both cases, the first term can be estimated by \cref{thm:semi_d_err}.
For the second term, we set $\vec e_q^k\coloneqq \bvec q(t^k)-\bar{\vec q}^{\,k}$
and analogously for \(\vec e_y^k\), \(\vec e_z^k\), and \(\vec e_\lambda^k\);
since both formulations use the same Dirichlet data, the Dirichlet parts cancel and
\(\vec e_\lambda^k\in\vLambda\).
\resetproofsteps
The proof proceeds in three steps: we first show that the discrete energy of the error controls all error components, then bound this energy by the temporal consistency error, and finally show that the latter is of the asserted order.
The temporal consistency error of a function \(\vec v\in C^1([0,T];\Ltwo)\) is the defect of the trapezoidal rule, viewed as a function of the midpoint,
\begin{align} \label{eq:cons_err}
\vec\rho[\vec v](t)
\coloneqq
\tfrac12\big(\del_t\vec v(t+\tfrac{\Delta t}2)+\del_t\vec v(t-\tfrac{\Delta t}2)\big)
- (\Delta t)^{-1}\big(\vec v(t+\tfrac{\Delta t}2)-\vec v(t-\tfrac{\Delta t}2)\big),
\end{align}
where we define $\vec\rho_y^{k-\frac12}\coloneqq\vec\rho[\bvec y](t^{k-\frac12})$,
and analogously $\vec\rho_z^{k-\frac12}$.
Evaluating the first and fourth semi-discrete equations at $t^k$, averaging the second and third equations at the time levels $t^k$ and $t^{k-1}$, and subtracting the fully discrete equations give the system
\begin{widedisplay}[36pt]
\begin{subequations} \label{eq:fd_err_sys}
\begin{alignat}{3}
  a(\vec e_q^k,\wqtestd) +{}& b(\wqtestd,\vec e_y^k) && &&
  = -\langle \vec e_\lambda^k,\wqtestd\,\nu \rangle_\setEdge,
  \\
  -b(\vec e_q^{k-\frac12},\wytestd)
  +{}& \langle \tau (\vec e_y-\vec e_\lambda)^{k-\frac12},\wytestd \rangle_\setEdge
  &&+ (D_t^{k-\frac12}\vec e_z,\wytestd)_\Sigma
  &&= -(\vec\rho_z^{k-\frac12},\wytestd)_\Sigma,
  \\
  -&(D_t^{k-\frac12}\vec e_y,\wztestd)_\Sigma
  &&+ d(\vec e_z^{k-\frac12},\wztestd)
  &&= (\vec\rho_y^{k-\frac12},\wztestd)_\Sigma, \label{eq:fd_err_sys3}
  \\
  -&\mathrlap{\langle \vec e_q^k\,\nu + \tau(\vec e_y^k-\vec e_\lambda^k), \bvec\mu \rangle_\setEdge} && && = 0.
\end{alignat}
\end{subequations}
\end{widedisplay}
The first and the last equations carry no consistency error, as they hold at every time level for both formulations.
Thus, the error system is the fully discrete scheme of \cref{lem:disc_energy} with vanishing Dirichlet data and right-hand sides $\vec g_\mathrm y=-\vec\rho_z$ and $\vec g_\mathrm z=\vec\rho_y$, and all errors vanish for $k=0$, since the initial values of the fully discrete scheme are defined by the same relations as those of the semi-discrete solution.
We denote the error energy by $\mathfrak E_d^k\coloneqq\mathfrak E_d[\vec e_q^k,\vec e_z^k,\vec e_y^k,\vec e_\lambda^k]$ as in \eqref{eq:disc_energy} and abbreviate
\[E\coloneqq\max_{0\le k\le K}\sqrt{\mathfrak E_d^k}.\]

\proofstep{The energy controls the error}\label{stp:fd_energy}
Since $\mathfrak E_d^k \ge \tfrac12 a(\vec e_q^k,\vec e_q^k) + \tfrac12 d(\vec e_z^k,\vec e_z^k)$, the coercivity of $a$ and $d$ gives $\|\vec e_q^k\|_\Sigma + \|\vec e_z^k\|_\Sigma \lesssim E$ for all $0\le k\le K$.
For $\vec e_y$, testing \eqref{eq:fd_err_sys3} with $\wztestd = D_t^{k-\frac12}\vec e_y\in\spoly$ and using the boundedness and coercivity of $d$ yields
\begin{align} \label{eq:fd_dtey}
    \|D_t^{k-\frac12}\vec e_y\|_\Sigma
    \lesssim
    \|\vec e_z^{k-\frac12}\|_\Sigma + \|\vec\rho_y^{k-\frac12}\|_\Sigma
    \lesssim
    E + \|\vec\rho_y^{k-\frac12}\|_\Sigma.
\end{align}
Since $\vec e_y^0=\vec 0$, the telescoping identity $\vec e_y^n = \Delta t\sum_{k=1}^n D_t^{k-\frac12}\vec e_y$ and $n\Delta t\le T$ give
\begin{align} \label{eq:fd_reduction}
    \max_{0\le k\le K}\big(\|\vec e_q^k\|_\Sigma + \|\vec e_z^k\|_\Sigma + \|\vec e_y^k\|_\Sigma\big)
    \lesssim
    E + \max_{1\le k\le K}\|\vec\rho_y^{k-\frac12}\|_\Sigma,
\end{align}
so that it suffices to bound the error energy and the temporal consistency error.

\proofstep{The consistency error bounds the energy}\label{stp:fd_bound}
The discrete energy identity \eqref{eq:disc_energy_id} and summation over $k=1,\dots,n$ give
\begin{align*}
    \mathfrak E_d^n = -\sum_{k=1}^n \Delta t\,(\vec\rho_z^{k-\frac12},D_t^{k-\frac12}\vec e_y)_\Sigma
    + \sum_{k=1}^n \Delta t\, (\vec\rho_y^{k-\frac12},D_t^{k-\frac12}\vec e_z)_\Sigma.
\end{align*}
In the second sum, $D_t^{k-\frac12}\vec e_z$ is not controlled by the energy, and we use summation by parts instead; since $\vec e_z^0=\vec 0$, this gives
\begin{align*}
    \sum_{k=1}^n \Delta t\, (\vec\rho_y^{k-\frac12},D_t^{k-\frac12}\vec e_z)_\Sigma
    =
    (\vec\rho_y^{n-\frac12}, \vec e_z^n)_\Sigma
    -\sum_{k=1}^{n-1} (\vec\rho_y^{k+\frac12}- \vec\rho_y^{k-\frac12}, \vec e_z^k)_\Sigma.
\end{align*}
Abbreviating
\begin{align*}
    R &\coloneqq \max_{1\le k\le K}\big(\|\vec\rho_y^{k-\frac12}\|_\Sigma + \|\vec\rho_z^{k-\frac12}\|_\Sigma\big)
    + \sum_{k=1}^{K-1}\|\vec\rho_y^{k+\frac12}-\vec\rho_y^{k-\frac12}\|_\Sigma,
\end{align*}
the Cauchy--Schwarz inequality, the estimate \eqref{eq:fd_dtey}, the bound $\|\vec e_z^k\|_\Sigma\lesssim E$ of \cref{stp:fd_energy} and $n\Delta t\le T$ yield $\mathfrak E_d^n \lesssim R\,E + R^2$ for all $n\le K$.
Taking the maximum over $n$ gives $E^2 \lesssim R\,E + R^2$ and therefore, by Young's inequality,
\begin{align} \label{eq:fd_energy_bound}
    E \lesssim R.
\end{align}

\proofstep{The consistency error is of the asserted order}\label{stp:fd_order}
For $\vec v\in C^3([0,T];\Ltwo)$ and $\vec w \in C^1([0,T];\Ltwo)$, the defect \eqref{eq:cons_err} satisfies
\begin{align*}
    \|\vec\rho[\vec v](t^{k-\frac12})\|_\Sigma
    &\lesssim (\Delta t)^2 \max_{t\in[t^{k-1},t^{k}]}\|\del_t^3\vec v(t)\|_\Sigma,
    \\ \|\vec\rho[\vec w](t^{k-\frac12})\|_\Sigma
    &\lesssim \max_{t\in[t^{k-1},t^{k}]}\|\del_t\vec w(t)\|_\Sigma,
\end{align*}
the first one by the error representation of the trapezoidal rule, the second one since $D_t^{k-\frac12}\vec w$ is an average of $\del_t\vec w$ over $[t^{k-1},t^k]$.
Since $\del_t\vec\rho[\vec v]=\vec\rho[\del_t\vec v]$,
the increments
\[\vec\rho[\vec v](t^{k+\frac12})-\vec\rho[\vec v](t^{k-\frac12})
=\int_{t^{k-\frac12}}^{t^{k+\frac12}}\vec\rho[\del_t\vec v](t)\,\mathrm{d}t\]
obey the same two bounds with one more derivative and an additional factor $\Delta t$.
Splitting the semi-discrete solution as $\bvec y = \vy + (\bvec y - \vy)$ and
$\bvec z = \vec z + (\bvec z - \vec z)$, where $\vec z\coloneqq \Cy\del_t\vy$, and
applying the first estimate to the exact solution and
the second one to the semi-discrete error, we obtain
\begin{align*}
    R &\lesssim (\Delta t)^2 \max_{t\in[0,T]}
    \big( \|\del_t^3\vy\|_\Sigma + \|\del_t^4 \vy\|_\Sigma \big)\\
    &\phantom{\lesssim{}}
    + \max_{t\in[0,T]}
    \big( \|\del_t(\bvec y-\vy)\|_\Sigma
    + \|\del_t^2(\bvec y-\vy)\|_\Sigma
    + \|\del_t(\bvec z-\vec z)\|_\Sigma \big),
\end{align*}
where we used $\del_t^3\vec z = \Cy \del_t^4\vy$ and absorbed the factor $K\Delta t = T$
stemming from the sum of the increments into the constant.
The first term is of order $(\Delta t)^2$ by the assumed regularity of the exact solution.
For the second one, note that, by the linearity of the problem and
the time independence of all bilinear forms,
$\del_t^j\bvec y$ is the semi-discrete approximation of $\del_t^j\vy$
for the data $\del_t^j\vf$ and $\del_t^j\lambdaD$;
the assumed regularity is the one required in \cref{thm:semi_d_err} for $j=1,2$.
The differentiated problems start with an initial error of the corresponding order,
since their initial data is obtained from $\bvec y_0$, $\bvec y_1$ by the semi-discrete equations,
which reproduce the corresponding relations of the continuous problem up to the projection error.
That theorem thus gives
\[\max_{t}\|\del_t^j(\bvec y-\vy)\|_\Sigma \lesssim h^{p+1-|s|}.\]
The bound for $\del_t(\bvec z-\vec z)$ follows from
$d(\bvec z,\wztestd) = (\del_t\bvec y,\wztestd)_\Sigma$,
which also holds for $\vec z$ and all $\wztestd\in\Ltwo$:
the coercivity of $d$ bounds the difference of $\del_t\bvec z$
and the $d$-orthogonal projection of $\del_t\vec z$ onto $\spoly$ by
$\|\del_t^{2}(\bvec y - \vy)\|_\Sigma$,
while the projection error is of order $h^{p+1}$.
Altogether,
\[R \lesssim h^{p+1-|s|} + (\Delta t)^2.\]

By the definition of $R$, inserting \eqref{eq:fd_energy_bound}
into \eqref{eq:fd_reduction} bounds all error components by $R$,
which is the asserted estimate.
\end{proof}

\section{Domain decomposition preconditioner}\label{sec:precond}

A practical implementation of the proposed HDG method requires solving the linear system of equations corresponding to~\cref{eq:global_f_d}. In the present context of simulating paper-based materials, cf.~\cref{fig:paper}, this can easily become intractable for direct solvers due to their large size. In addition, these systems are typically poorly conditioned, requiring appropriate preconditioners when using iterative methods. This section introduces a two-level overlapping additive Schwarz preconditioner similar to~\cite{GoHeMa22}, which we use within a preconditioned conjugate gradient method. 

The domain decomposition and coarse space used by the proposed preconditioner are constructed using an artificial (coarse) mesh $\mathcal{T}_H$ of a bounded domain $\Omega\subset \mathbb{R}^3$ containing the spatial network. 
\begin{figure}
 \includegraphics[height=.5\textwidth]{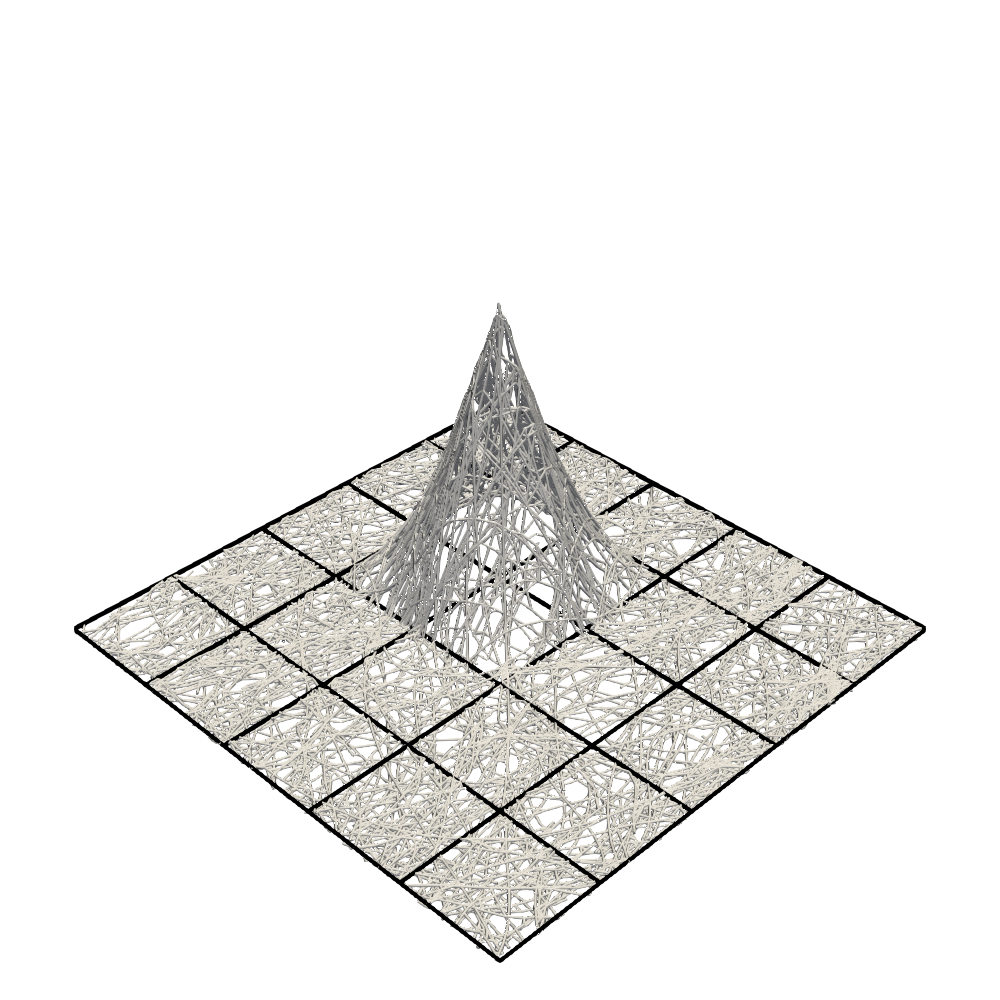}
 \caption{An artificial mesh $\mathcal{T}_H$ (black) overlaid on the network (gray),
 with a coarse basis function $\varphi_i$.}
 \label{fig:artificialgrid}
\end{figure}
For simplicity, we assume that $\Omega$ is a box equipped with a uniform Cartesian mesh.  
Note that for thin materials such as cardboard, the corresponding artificial mesh may contain far fewer elements in one spatial dimension than in the others.
With respect to the artificial mesh, we then introduce the set of trilinear basis functions $\{\varphi_i\}_{i=1}^m$, where $m$ is the number of nodes of the artificial mesh, and corresponding supports~$U_i\coloneqq \operatorname{supp}(\varphi_i)$. An illustration of an artificial mesh and corresponding basis functions can be found in \cref{fig:artificialgrid}. The space of continuous piecewise trilinear functions with respect to $\mathcal{T}_H$, satisfying Dirichlet boundary conditions on boundary segments where the network nodes are fixed, is denoted by $V_H$. Note that the domain of the functions in~$V_H$ is considered to be the nodes of the spatial network. 

We employ a preconditioner based on the subspace decomposition
\begin{equation*}
	\vLambda=\vLambda_{0}+ \vLambda_{1}+\dots+ \vLambda_{m},
\end{equation*}
with the coarse space $\vLambda_{0}\coloneqq V_H^6$ and local subspaces defined for $i=1,\dots,m$ by $\vLambda_{i} \coloneqq \{\vec v\in \vLambda \with \operatorname{supp}(\vec v)\subset U_i\}$. Given this decomposition, for each subspace we introduce a corresponding projection operator $P_i\colon \vLambda \rightarrow \vLambda_{i}$ such~that
\begin{equation*}
	\hat A  (P_i \vec \lambda,\vec \mu)=\hat A(\vec \lambda,\vec \mu), 
\end{equation*}
holds for all $\vec \mu \in \vLambda_{i}$, cf.~\cref{eq:Ahat}.
Note that the existence and uniqueness of such an operator are a direct consequence of $\hat A$ being an inner product on $\vLambda$. A preconditioned version of the operator $\hat A$ can then be defined as follows:
\begin{equation*}
	P\coloneqq P_0+P_1+\dots+P_m.
\end{equation*}
The preconditioner, denoted by $B$, is then given by the relation $P=B\hat A$. Note that the preconditioner is never explicitly formed. In practice, only the preconditioned operator must be computed, which requires the direct solution of a coarse global problem and~$m$ local problems, all of which can be solved independently. Because this approach involves direct solves, it is sometimes called semi-iterative. In the following, we use this preconditioner for the conjugate gradient method.

In this section, we prove that the operator $\hat A$ is spectrally equivalent
to a combination of graph mass-type and weighted graph Laplacian-type operators;
for the static setting, see~\citep[Ass.~3.1]{GoHeMa22}.
Certain homogeneity, connectivity, and locality assumptions on the spatial network at coarse scales
\citep[Ass.~3.5, Lem.~3.7]{GoHeMa22}
then yield uniform convergence of the resulting preconditioned conjugate gradient method~\citep[Thm.~4.3]{GoHeMa22}.
The proof in~\citep{GoHeMa22} is inspired by classical Schwarz theory (see, e.g.,~\cite{Xu92,ToW05,KorY16})
and constructs a quasi-interpolation operator in the spatial network setting,
whose approximation and stability properties can be proved using Friedrichs' and Poincar\'e's
inequalities on subgraphs~\citep[Lem.~3.6]{GoHeMa22}.
In practice, this preconditioner has demonstrated its ability to cope with the typically complex
geometry of spatial networks and highly varying material properties;
see, e.g.,~\cite{Grtz2024}.
The spectral equivalence announced above is formulated in terms of two bilinear forms,
which we introduce first.
Both are assembled from local contributions associated with an edge $\edge\in\setEdge$
with endpoints $\node_i,\node_j\in\Nodes$:
a mass-type form and a weighted graph Laplacian-type form.
For $\vec \lambda, \vec \mu \in \vLambda$, we define
\begin{align*}
    \mathcal M_{\graph,\edge}(\vec \lambda,\vec \mu)
    &\coloneqq \tfrac12 h_\edge (
        \vec \lambda(\node_i) \cdot \vec \mu(\node_i)
        + \vec \lambda(\node_j) \cdot \vec \mu(\node_j)
    ),\\
    \mathcal L_{\graph,\edge}(\vec \lambda,\vec \mu)
    &\coloneqq \tfrac12 h_\edge^{-1}
        (\vec \lambda(\node_i)-\vec \lambda(\node_j))
        \cdot
        (\vec \mu(\node_i)-\vec \mu(\node_j)).
\end{align*}

The corresponding global bilinear forms are obtained by summation over all edges, i.e.,
\begin{align}
    \Mgraph(\vec \lambda,\vec \mu)
    \coloneqq \sum_{\edge \in \setEdge}
    \mathcal M_{\graph,\edge}(\vec \lambda,\vec \mu),\qquad
    \Lgraph(\vec \lambda,\vec \mu)
    \coloneqq \sum_{\edge \in \setEdge}
    \mathcal L_{\graph,\edge}(\vec \lambda,\vec \mu). \label{eq:lapl_mass}
\end{align}

The following theorem establishes that the condensed problem \cref{eq:global_f_d}
is spectrally equivalent to a combination of the mass-type form $\Mgraph$ and the weighted graph Laplacian $\Lgraph$.
This is exactly what one would typically expect from discretizations of wave-type problems.
For classical finite element discretizations, for example, the graph operators would be replaced by corresponding mass and stiffness matrices.

\begin{theorem}[Spectral equivalence] \label{thm:spectral_equi}
  Assume that the polynomial degree satisfies $\polyp\geq1$,
  that the time step satisfies $h \lesssim \Delta t$,
  and that the material coefficients $\vec C_\bullet(\vec{x})$ are edgewise constant
  for $\bullet \in \{\mathrm{n,m,u,r}\}$.
Then, there exist $0<\alpha\leq\beta<\infty$ such that for all $\vec\lambda \in \vLambda$,
  we have
	\begin{align}
		\label{EQ:speceq}
		\alpha\left(\frac{\Mgraph(\vec \lambda,\vec \lambda)}{(\Delta t)^2}
    + \Lgraph(\vec \lambda,\vec \lambda)\right)
    \leq
    \hat A(\vec \lambda,\vec \lambda)
    \leq \beta\left( \frac{\Mgraph(\vec \lambda,\vec \lambda)}{(\Delta t)^{2}} + \Lgraph(\vec \lambda,\vec \lambda)
     \right),
	\end{align}
	where the constants $\alpha,\beta$ depend only on the material parameters, the diameter of the domain,
  the final time $T$, and the constant implicit in the assumed scaling of $\Delta t$.
\end{theorem}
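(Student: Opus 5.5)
The plan is to localize $\hat A$ to single edges and then prove a two-sided estimate edge by edge. By \cref{lem:fd_glob}, $\hat A(\vec\lambda,\vec\lambda)$ is the sum of $a(\vec Q^k(\vec\lambda),\vec Q^k(\vec\lambda))$, $d(\vec Z^k(\vec\lambda),\vec Z^k(\vec\lambda))$ and the stabilization term. Since the local solvers act edgewise, this splits as $\hat A(\vec\lambda,\vec\lambda)=\sum_{\edge}\hat A_\edge(\vec\lambda(\node_i),\vec\lambda(\node_j))$. Here $\hat A_\edge$ is a quadratic form on $\R^{12}$ built from the single-edge local problem with homogeneous sources. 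Because the graph forms \cref{eq:lapl_mass} are also sums of edge contributions, it suffices to prove, uniformly over all edges,
\[
\hat A_\edge(\vec\lambda_i,\vec\lambda_j)\simeq \tfrac{h_\edge}{(\Delta t)^2}\big(|\vec\lambda_i|^2+|\vec\lambda_j|^2\big)+h_\edge^{-1}|\vec\lambda_i-\vec\lambda_j|^2 .
\]

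\textbf{Reduction to a reference edge.} I will first eliminate $\vec Z^k$ using \cref{eq:saddle_fully_discrete-c} with vanishing data. Since $\vec C_\mathrm y$ is edgewise constant, this gives $\vec Z^k=\tfrac2{\Delta t}\vec C_\mathrm y\vec Y^k$. Hence $d(\vec Z^k,\vec Z^k)=\tfrac4{(\Delta t)^2}c(\vec Y^k,\vec Y^k)$, and the local problem is an HDG discretization of a static Timoshenko beam with reaction term $\tfrac4{(\Delta t)^2}\vec C_\mathrm y$. Next I map $\edge$ affinely to $[0,1]$ and rescale the variables ($\vec q$ by $h_\edge^{-1}$, with the $\vix$ coupling carrying a factor $h_\edge$). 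The energy then takes the form $h_\edge^{-1}$ times a quadratic form $F(\vec\lambda_i,\vec\lambda_j;\gamma,\delta,\sigma)$ on the reference edge. Its parameters are $\gamma=h_\edge^2/(\Delta t)^2$, $\delta=h_\edge$ in the cross-product term, and $\sigma=\tau_\edge h_\edge$. The edgewise constant coefficients and their bounds enter continuously, and the unit tangent $\vec i_\edge$ ranges over a compact set. The target becomes $F\simeq\gamma(|\vec\lambda_i|^2+|\vec\lambda_j|^2)+|\vec\lambda_i-\vec\lambda_j|^2$. The assumption $h\lesssim\Delta t$ and the boundedness of the domain (so $h_\edge\lesssim 1$) keep $\gamma$ and $\delta$ in bounded sets.

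\textbf{Proving the reference bounds.} For the upper bound I will use a variational characterization: the local form equals a minimum over discrete fields of an energy of reaction--diffusion type, or I will use a direct continuity estimate of the local solver in terms of its nodal data. For the lower bound I will argue by contradiction and compactness in the parameters, normalizing the right-hand side. The case $\gamma\to0$ needs separate care. In that limit the form degenerates to the static beam energy plus stabilization, whose kernel consists of discrete rigid motions $\vec u=\vec a+\vec b\times\vec x$, $\vec r=\vec b$. For these, $|\vec\lambda_i-\vec\lambda_j|^2\sim\delta^2|\vec b|^2$ is small, so the mass term $\gamma|\vec b|^2$ must dominate. I will track this by splitting $\vec\lambda$ into a rigid-motion component and an orthogonal complement. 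On the complement the static HDG Timoshenko estimate of \cite{hauck2025} gives control by $|\vec\lambda_i-\vec\lambda_j|^2$. On the rigid part, the reaction term $\gamma\,c(\vec Y,\vec Y)$ together with the stabilization recovers $\gamma(|\vec\lambda_i|^2+|\vec\lambda_j|^2)$. The assumption $p\ge1$ is needed so that linear rigid displacements lie in the local polynomial space.

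\textbf{Main obstacle.} I expect the main difficulty to be uniformity of the lower bound jointly in $\gamma$, $\delta$ and $\sigma$. In particular, $\sigma$ may degenerate for $s=\pm1$, and the constant must not deteriorate when the reaction and cross-product couplings are of comparable size. I would first try to prove an explicit reference-edge inequality, $F\gtrsim\gamma\|\vec Y\|^2+|\vec\lambda_i-\vec\lambda_j|^2$, via a discrete Korn/Poincaré-type argument with constants independent of $(\gamma,\delta,\sigma)$ in compact ranges. Only if that fails would I fall back on a compactness argument over the closed parameter set, including the limiting points.
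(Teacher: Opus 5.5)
\textbf{The lower bound has a genuine gap.} Your edgewise localization and the elimination $\vec Z=\tfrac{2}{\Delta t}\Cy\vec Y$ match the paper. So does your energy-minimization route to the upper bound. The paper takes the componentwise affine interpolant of the endpoint values as competitor, which is where $p\ge1$ enters; the stabilization term then vanishes, so $\beta$ is independent of $\tau$.

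The gap is in controlling the nodal values themselves, i.e.\ the term $(\Delta t)^{-2}\Mgraph$. ``Reaction plus stabilization'' only gives $\gamma|\vlh|^2\lesssim\gamma\|\vec Y\|^2+\gamma|\vec Y-\vlh|^2$ at the endpoints. The last term is bounded by $\sigma|\vec Y-\vlh|^2$ only if $\gamma\lesssim\sigma$, i.e.\ $h_\edge(\Delta t)^{-2}\lesssim\tau_\edge$. This fails, e.g., for $\tau\simeq 1$ or $\tau\simeq h$ with $\Delta t\simeq h$. Note also that $\sigma=\tau_\edge h_\edge$ degenerates for $s=0,1$, not for $s=-1$. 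The theorem, however, imposes nothing on $\tau$, and its constants are $\tau$-independent.

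The missing idea is to read the traces off the dual variable. Test the discrete constitutive relation with $\vec c\,\phi$, where $\vec c\in\R^6$ and $\phi\in\mathbb{P}_p$:
\[
a_\edge(\vec Q(\vlh),\vec c\,\phi)=-\vec c\cdot\vec F_\phi,\qquad \vec F_\phi\coloneqq\langle\phi,\vlh\nu\rangle_\edge-\int_\edge(\phi'\vec I+\phi\,\vix^\top)\vec Y(\vlh)\,\mathrm{d}x .
\]
By duality this gives $a_\edge(\vec Q,\vec Q)\ge c_{\mathrm q,\min}|\vec F_\phi|^2\|\phi\|_\edge^{-2}$.
\begin{itemize}
\item The choice $\phi\equiv1$ yields $h_\edge^{-1}|\vlh(\node_i)-\vlh(\node_j)|^2$.
\item The choices $\phi=x/h_\edge$ and $\phi=1-x/h_\edge$ yield $h_\edge^{-1}|\vlh(\node_j)|^2$ and $h_\edge^{-1}|\vlh(\node_i)|^2$.
\item The remainders, of size $\|\vec Y\|_\edge^2$ and $h_\edge^{-2}\|\vec Y\|_\edge^2$, are absorbed by $\tfrac{4}{(\Delta t)^2}c_{\mathrm y,\min}\|\vec Y\|_\edge^2$ after weighting.
\end{itemize}
The stabilization is simply discarded. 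The weights leave a mass coefficient $\simeq\min\{h_\edge^{-2},(\Delta t)^{-2}\}$, which is $\gtrsim(\Delta t)^{-2}$ because $h\lesssim\Delta t$.

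\textbf{Your compactness fallback cannot work as stated.} You ask for uniformity over independent compact ranges of $(\gamma,\delta,\sigma)$, but the reference inequality is false there. Take the rigid motion $\vec u=\vec a+x\,\vec b\times\vec i$, $\vec r=\vec b$ with $\vec b\perp\vec i$. The affine competitor gives $F\lesssim\gamma(1+\delta^2)(|\vec a|^2+|\vec b|^2)$, whereas $|\vlh(\node_i)-\vlh(\node_j)|^2=\delta^2|\vec b|^2$. The ratio tends to zero as $\gamma\to0$ with $\delta$ fixed; this is the static limit, in which rigid motions carry no energy.

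What rescues the statement is the coupling $\delta^2=h_\edge^2=\gamma(\Delta t)^2\le T^2\gamma$, i.e.\ $\Delta t\le T$. This is exactly how $T$ enters $\alpha$; in the paper it appears through the weight of the $\phi\equiv1$ bound, which stays $\gtrsim T^{-2}$. You assert that ``the mass term must dominate'' on rigid motions, but you never derive this coupling.

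Moreover, a lower bound does not split over a non-$F$-orthogonal decomposition into a rigid part and a complement unless you estimate the cross terms, which you do not. Once the duality argument above is in place, neither this splitting nor the static estimate of \cite{hauck2025} is needed.
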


\begin{proof}

We set
\begin{align*}
c_{\mathrm q,\min}&\coloneqq \min\{c_{\mathrm n,\min},c_{\mathrm m,\min}\}, & c_{\mathrm q,\max}&\coloneqq \max\{c_{\mathrm n,\max},c_{\mathrm m,\max}\},
\end{align*}
and analogously define $c_{\mathrm y,\min}$ and $c_{\mathrm y,\max}$.
For a fixed time step, we suppress the superscript $k$ and denote by $\vec Q(\vlh)$, $\vec Y(\vlh)$, and $\vec Z(\vlh)$
the edge-local solvers from
\cref{eq:saddle_fully_discrete-a,eq:saddle_fully_discrete-b,eq:saddle_fully_discrete-c}
with hybrid datum $\vlh$ and zero data.
To prove the desired bounds, we use a variational representation in terms of a local energy
of the edge-local condensed bilinear form
\begin{align*}
    \hat A_\edge(\vlh, \vlh)
    = a_\edge(\vec Q(\vlh), \vec Q(\vlh))
    + d_\edge(\vec Z(\vlh), \vec Z(\vlh))
    + \tau_\edge \langle  \vec Y(\vlh) - \vlh, \vec Y(\vlh) - \vlh \rangle_\edge.
\end{align*}
For any $\vec\eta\in\spoly_\edge$, let
$\vec Q_{\vec\eta},\vec Z_{\vec\eta}\in\spoly_\edge$
be determined by \eqref{eq:saddle_fully_discrete-a} and \eqref{eq:saddle_fully_discrete-c},
respectively, with primal variable $\vec\eta$ and hybrid datum $\vlh$; in particular
$\vec Q_{\vec Y(\vlh)}=\vec Q(\vlh)$ and $\vec Z_{\vec Y(\vlh)}=\vec Z(\vlh)$.
Explicitly, we have $\vec Z_{\vec\eta} = \tfrac{2}{\Delta t} \Cy\vec\eta$ and
integration by parts on the edge yields for all $\vec\eta,\wqtestd\in\spoly_\edge$
\begin{equation}
\begin{aligned}
  a_\edge(\vec Q_{\vec\eta},\wqtestd)
    &= (\delx\wqtestd+\vix\wqtestd, \vec\eta)_\edge
       -\langle \vlh, \wqtestd\nu \rangle_\edge \\
    &= -(\delx\vec\eta-\vix^\top\vec\eta, \wqtestd)_\edge
       +\langle \vec\eta-\vlh, \wqtestd\nu \rangle_\edge.
\end{aligned}
\label{eq:Qeta2}
\end{equation}
The local energy is defined as
\begin{align*}
\mathfrak E_\edge(\vec\eta)
\coloneqq
\tfrac{1}{2}a_\edge(\vec Q_{\vec\eta},\vec Q_{\vec\eta})
+\tfrac{1}{2}d_\edge(\vec Z_{\vec\eta},\vec Z_{\vec\eta})
+\tfrac{1}{2}\tau_\edge
\langle\vec\eta-\vlh,\vec\eta-\vlh\rangle_\edge.
\end{align*}
Differentiating and using \eqref{eq:saddle_fully_discrete-a} and
\eqref{eq:saddle_fully_discrete-c} gives, for every $\vec\xi\in\spoly_\edge$,
\begin{align*}
\mathfrak E_\edge'(\vec\eta)[\vec\xi]
= -b_\edge(\vec Q_{\vec\eta},\vec\xi)
+\tfrac{2}{\Delta t} (\vec Z_{\vec\eta},\vec\xi)_\edge
+\tau_\edge \langle\vec\eta-\vlh,\vec\xi\rangle_\edge.
\end{align*}
Hence, by \eqref{eq:saddle_fully_discrete-b},
$\vec Y(\vlh)$ is a stationary point of $\mathfrak E_\edge$.
By the coercivity of $d_\edge$ and \eqref{eq:saddle_fully_discrete-c},
$\mathfrak E_\edge$ is strictly convex, and hence $\vec Y(\vlh)$ is its unique minimizer.
Therefore
\begin{align}
\hat A_\edge(\vlh,\vlh)
= 2\mathfrak E_\edge\bigl(\vec Y(\vlh)\bigr)
\leq 2\mathfrak E_\edge(\vec\eta)
\qquad \forall\vec\eta\in\spoly_\edge. \label{eq:upper-min}
\end{align}

The upper bound follows directly by choosing as competitor
the componentwise affine interpolant of the endpoint values of $\vlh$,
denoted by $\vec\eta_{\vec\lambda}$.
This choice makes the stabilization term in the local energy vanish
and makes \cref{eq:Qeta2} yield
$\vec Q_{\vel} = -\vec C_\mathrm{q}(\delx\vel-\vix^\top\vel)$,
the discrete counterpart of the constitutive relation \cref{eq:comp_cond}.
This allows us to write the upper bound as
\begin{align*}
    \hat A_\edge(\vlh,\vlh)
    \leq 2\,\mathfrak E_\edge(\vel)
    = \norm{\vec C_\mathrm{q}^{1/2}(\delx\vel-\vix^\top\vel)}_\edge^{2}
      + \tfrac{4}{(\Delta t)^2} \norm{\Cy^{1/2}\vel}_\edge^{2}.
\end{align*}
The properties of the affine interpolant and the bounds on the material coefficients
yield the upper bound
\begin{align*}
    \hat A_\edge(\vlh,\vlh)
    \leq 4c_{\mathrm q,\max}\mathcal L_{\graph,\edge}(\vlh,\vlh)
    + \Bigl(2c_{\mathrm q,\max}+\frac{4c_{\mathrm y,\max}}{(\Delta t)^2}\Bigr)
    \mathcal M_{\graph,\edge}(\vlh,\vlh),
\end{align*}
where $\Delta t\leq T$ allows absorption with
$\beta \coloneqq \max\{4c_{\mathrm{q},\max}, 2c_{\mathrm{q},\max}T^2 + 4c_{\mathrm{y},\max}\}.$

For the lower bound, we disregard the non-negative stabilization term in the representation of $\hat A_\edge$,
yielding
\begin{align}\label{eq:lower_start}
    \hat A_\edge(\vlh,\vlh)
    \geq a_\edge(\vec Q(\vlh),\vec Q(\vlh))
       + \frac{4}{(\Delta t)^2} c_{\mathrm y,\min}\norm{\vec Y(\vlh)}_\edge^{2}.
\end{align}
To bound the first term from below, we use the dual representation of a norm in a Hilbert space:
taking any scalar polynomial $0\not\equiv\phi(x)\in\poly[x]$ and a constant $\vec 0\neq\vec c\in\R^6$,
we can write
\begin{align*}
  a_\edge(\vec Q(\vlh),\vec Q(\vlh))
  &\geq a_\edge(\vec Q(\vlh), \vec c\phi)^2
        a_\edge(\vec c\phi, \vec c\phi)^{-1}.
\end{align*}
Applying \cref{eq:Qeta2} yields
$a_\edge(\vec Q(\vlh),\vec c\,\phi) = -\vec c\cdot\vec F_\phi$
where
\begin{align*}\vec F_\phi
    \coloneqq
    \langle\phi,\vlh\nu\rangle_\edge
    - \int_\edge\bigl(\phi'\,\vec I + \phi\,\vix^\top\bigr)\vec Y(\vlh) \,\mathrm{d}x.
\end{align*}
Substituting this yields
\begin{align}\label{eq:dualbound}
  a_\edge(\vec Q(\vlh),\vec Q(\vlh))
    &\geq c_{\mathrm{q},\min} |\vec F_{\phi}|^2 \|\phi\|_\edge^{-2},
\end{align}
where we bound
\[|\vec F_\phi|^2 \geq \tfrac12 |\langle\phi,\vlh\nu\rangle_\edge|^2
-\Bigl|\int_\edge(\phi'\vec I + \phi\vix^\top)\vec Y(\vlh)\, \mathrm{d} x\Bigr|^2.\]
We choose
\begin{align*}
    \phi_0\equiv1,
    \qquad
    \phi_1(x)\coloneqq \tfrac{x}{h_\edge},
    \qquad
    \phi_2(x)\coloneqq 1-\tfrac{x}{h_\edge},
\end{align*}
compute $\|\phi_0\|_\edge^2=h_\edge$, and $\|\phi_i\|^2_\edge=\tfrac{h_\edge}3$,
$|\phi_i'|=h_\edge^{-1}$ for $i\geq 1$,
and apply the Cauchy--Schwarz inequality,
to bound the integral terms
\begin{align*}
    \Bigl|\int_\edge\phi_0\,\vix^\top\vec Y(\vlh)\,\mathrm{d}x\Bigr|^2
    &\leq h_\edge\norm{\vec Y(\vlh)}_\edge^2,
    \\
    \Bigl|\int_\edge(\phi_i'\vec I+\phi_i\vix^\top)\vec Y(\vlh)\,\mathrm{d}x\Bigr|^2
    &\leq h_\edge^{-1}\bigl(1+h_\edge\bigr)^{2}\norm{\vec Y(\vlh)}_\edge^2
    \leq C_\Omega h_\edge^{-1}\norm{\vec Y(\vlh)}_\edge^2
\end{align*}
for $i\geq 1$, with $C_\Omega\coloneqq(1+\diam\Omega)^2$;
here we used that $h_\edge\leq\diam\Omega$.

Let $c_0,c_{1}=c_{2}\in(0,1]$, and consider a convex combination of \cref{eq:dualbound}
for $\phi_0,\phi_1,\phi_2$ with weights $c_0/2$, $c_1/4$, $c_2/4$ summing to at most 1.
Substituting this into \cref{eq:lower_start} yields
\begin{align*}
    \hat A_\edge(\vlh,\vlh)
    &\geq
      \frac{c_0 c_{\mathrm q,\min}}{2}\mathcal{L}_{\graph,\edge}(\vlh,\vlh)
    + \frac{3c_1 c_{\mathrm q,\min}}{4h_\edge^{2}}\mathcal{M}_{\graph,\edge}(\vlh,\vlh)
      \\
    &\phantom{\geq} {}+ \Bigl(\frac{4}{(\Delta t)^2} c_{\mathrm y,\min}
            - \frac{c_0 c_{\mathrm q,\min}}{2}
            - \frac{3C_\Omega c_1 c_{\mathrm q,\min}}{2h_\edge^{2}}\Bigr)\norm{\vec Y(\vlh)}_\edge^{2}.
\end{align*}
The choices
\begin{align*}
    c_0\coloneqq\min\Bigl\{1,\frac{4c_{\mathrm y,\min}}{c_{\mathrm q,\min}(\Delta t)^{2}}\Bigr\},
    \qquad
    c_1\coloneqq\min\Bigl\{1,\frac{4c_{\mathrm y,\min}h_\edge^{2}}{3C_\Omega c_{\mathrm q,\min}(\Delta t)^{2}}\Bigr\}
\end{align*}
bound each of the two subtracted terms by $2c_{\mathrm y,\min}(\Delta t)^{-2}$ and hence render
the last bracket non-negative.
Using $\Delta t\leq T$ and $h_\edge^{-2}\gtrsim (\Delta t)^{-2}$,
the two remaining coefficients are bounded from below by
\begin{align*}
    \frac{c_0 c_{\mathrm q,\min}}{2}
    \geq \min\Bigl\{\frac{c_{\mathrm q,\min}}{2},\frac{2c_{\mathrm y,\min}}{T^{2}}\Bigr\},
    \qquad
    \frac{3c_1 c_{\mathrm q,\min}}{4h_\edge^{2}}
    \gtrsim \frac{\min\{3c_{\mathrm q,\min},\,4c_{\mathrm y,\min}C_\Omega^{-1}\}}{4(\Delta t)^{2}}.
\end{align*}
Summing over all edges $\edge\in\setEdge$
yields the desired lower bound of \cref{EQ:speceq}.
\end{proof}

We are now in a position to extend the classical condition number bounds for two-level overlapping Schwarz preconditioners~\cite{ToW05} to the network setting.
The corresponding proof depends on the aforementioned spectral equivalence result,
as well as on two constants:
the homogeneity constant $\sigma$,
measuring the variation of the network density across subdomains,
and the constant $\mu$ of a Friedrichs--Poincar\'e inequality on the network,
which quantifies its connectivity.
For their precise definitions, we refer to~\citep[Ass.~3.5, Lem.~3.6]{GoHeMa22},
while the bound on the condition number, stated below,
can be found in~\citep[Thm.~4.3]{GoHeMa22}.

\begin{theorem}[Condition number bound]\label{thm:condition}
Under the assumptions of \cref{thm:spectral_equi} and \citep[Ass.~3.5,\ Lem.~3.7]{GoHeMa22},
the condition number $\kappa$ of the preconditioned operator $P$ is bounded by
\begin{align*}
    \kappa \leq C_d \beta\alpha^{-1} \sigma\mu^2,
\end{align*}
where $C_d$ only depends on the dimension of the ambient space, and
$\alpha, \beta$ are the spectral equivalence constants of \cref{thm:spectral_equi}.
\end{theorem}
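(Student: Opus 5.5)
The plan is to reduce the statement to the abstract Schwarz framework (Toselli--Widlund) and to treat the network-specific parts with the spectral equivalence of \cref{thm:spectral_equi}. Write $\|\vec\lambda\|_{\graph}^2 \coloneqq (\Delta t)^{-2}\Mgraph(\vec\lambda,\vec\lambda)+\Lgraph(\vec\lambda,\vec\lambda)$ for the model energy. By \cref{thm:spectral_equi}, $\alpha\|\cdot\|_\graph^2\le\hat A(\cdot,\cdot)\le\beta\|\cdot\|_\graph^2$ on $\vLambda$. The extreme eigenvalues of $P=\sum_i P_i$, which is $\hat A$-self-adjoint, are then controlled by the classical three ingredients. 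Each is first verified for the model form and then transferred to $\hat A$, losing at most a factor $\beta/\alpha$ in total.

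For the upper eigenvalue bound, I would use a coloring argument. Each point of $\Omega$ lies in at most $2^d$ of the supports $U_i$, so the strengthened Cauchy--Schwarz constant for the local spaces is bounded by a dimension-dependent number. This gives $\lambda_{\max}(P)\le C_d$, counting the coarse space as one extra color. For this step one needs locality of $\hat A$: $\hat A$ is a sum of edge contributions $\hat A_\edge$, and every edge couples only its two endpoints. Consequently $\hat A(\vec v_i,\vec v_j)=0$ whenever the supports are separated by more than one edge, and the locality assumption of \citep[Lem.~3.7]{GoHeMa22} (edges short compared to $H$) makes this compatible with the coarse mesh overlap.

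For the lower bound, I need a stable decomposition: for every $\vec\lambda\in\vLambda$ there should be $\vec\lambda=\sum_{i=0}^m\vec\lambda_i$ with $\sum_i\hat A(\vec\lambda_i,\vec\lambda_i)\le C_0^2\,\hat A(\vec\lambda,\vec\lambda)$. Then $\lambda_{\min}(P)\ge C_0^{-2}$. I would take $\vec\lambda_0\coloneqq\mathcal I_H\vec\lambda$, the network quasi-interpolation of \citep{GoHeMa22}, applied componentwise, and set $\vec\lambda_i\coloneqq I(\varphi_i(\vec\lambda-\vec\lambda_0))$, the nodal interpolation restricted to network nodes. By spectral equivalence it suffices to bound $\sum_i\|\vec\lambda_i\|_\graph^2\lesssim\sigma\mu^2\|\vec\lambda\|_\graph^2$; multiplying by $\beta$ and dividing by $\alpha$ then transfers this to $\hat A$. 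The network results quoted from \citep{GoHeMa22} give $L^2$-type approximation $\Mgraph(\vec\lambda-\mathcal I_H\vec\lambda)\lesssim\sigma\mu^2H^2\Lgraph(\vec\lambda)$ and $\Lgraph$-stability of $\mathcal I_H$, both derived from Friedrichs--Poincar\'e inequalities on subgraph patches. For the local pieces, the discrete product rule on each edge combined with $|\nabla\varphi_i|\lesssim H^{-1}$ bounds $\Lgraph(\vec\lambda_i)$ by $\Lgraph$ of $\vec\lambda-\vec\lambda_0$ on the patch plus $H^{-2}\Mgraph$ of it. The mass parts are trivially stable because $0\le\varphi_i\le1$ and the supports overlap finitely.

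The main obstacle, where I expect to spend effort, is the $\Delta t$-weighted mass term. The required bound is $(\Delta t)^{-2}\Mgraph(\vec\lambda_0)\lesssim\|\vec\lambda\|_\graph^2$, i.e.\ $L^2$-stability $\Mgraph(\mathcal I_H\vec\lambda)\lesssim\sigma\Mgraph(\vec\lambda)$. This stability is in fact provided by the construction of $\mathcal I_H$ through local averages; the homogeneity constant $\sigma$ compares the patch network densities. The term $H^{-2}\Mgraph(\vec\lambda-\vec\lambda_0)$ in the local pieces is bounded by $\sigma\mu^2\Lgraph(\vec\lambda)$ regardless of $\Delta t$. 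Hence the bound holds uniformly, without any relation between $H$ and $\Delta t$, and I would check carefully that no $H/\Delta t$ factor survives. Collecting the constants gives $\kappa=\lambda_{\max}/\lambda_{\min}\le C_d\,\beta\alpha^{-1}\sigma\mu^2$, which is exactly the argument of \citep[Thm.~4.3]{GoHeMa22} with the static stiffness replaced by $\|\cdot\|_\graph^2$.
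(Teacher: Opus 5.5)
Your proposal is correct and follows the same route as the paper. The paper transfers the two-level Schwarz argument of \citep[Thm.~4.3]{GoHeMa22} from $\Lgraph$ to the energy $\Lgraph+(\Delta t)^{-2}\Mgraph$ via \cref{thm:spectral_equi}, noting that the mass term is stable under the partition-of-unity splitting and that the coarse interpolant is estimated as in the static case; your explicit use of the $\Mgraph$-stability of $\mathcal I_H$ for the $(\Delta t)^{-2}$-weighted coarse component is what justifies the paper's brief claim that the constants are independent of $\Delta t$, with no relation between $H$ and $\Delta t$ required.
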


The proof is given in~\citep[Thm.~4.3]{GoHeMa22} for the static case, in which $\hat A$ is
spectrally equivalent to $\Lgraph$ alone, and carries over verbatim to the present
equivalence with $\Lgraph + (\Delta t)^{-2}\Mgraph$: the mass term is invariant
under the stable decomposition and is therefore bounded by
$\hat A$ on every subspace, while the coarse interpolation is estimated as before, so that
the constants remain independent of $\Delta t$.

\section{Numerical examples}\label{sec:numexp}

In this section, we present numerical experiments that support the theoretical predictions of this paper.
The code to reproduce the numerical experiments of this paper is available at \url{https://github.com/HyperHDG/}, and the data describing the fiber network model of paper used in the second set of numerical experiments can be found at~\cite{data2}.

\subsection{Optimal-order convergence of the method}

The first numerical example considers a toy problem to study the convergence properties of the proposed HDG method for Timoshenko beam networks. Specifically, we consider a network representing the three-dimensional coordinate cross
\[\big([-1,1]\times \{0\}\times \{0\}\big) \cup \big(\{0\}\times[-1,1]\times \{0\}\big) \cup \big(\{0\}\times \{0\}\times[-1,1]\big),\]
consisting of six arms of unit length aligned with the directions $\pm\vec e_1, \pm\vec e_2, \pm\vec e_3$. Before mesh refinement, the network consists of six edges and seven nodes. Dirichlet boundary conditions are imposed at the six nodes located at the tips of the cross. The bounding domain is chosen to be $\Omega = [-1,1]^3$.
To construct suitable data and corresponding solutions, we use the method of manufactured solutions. More precisely, we consider homogeneous material coefficients,
i.e., $\vec C_\mathrm{n}=\vec C_\mathrm{m}=\vec C_\mathrm{u}=\vec C_\mathrm{r}=\mathds{1}$, and choose the force terms as well as the Dirichlet and initial data such that the problem admits the following displacements and rotations as its solution:
\begin{align*}
	u_i(t,\vec x) &= A_i \sin(\omega \hat x)\cos(\omega t + a_i), &
	r_i(t,\vec x) &= B_i \sin(\omega \hat x)\cos(\omega t + b_i),
\end{align*}
for components $i=1,2,3$, where $\omega = 2\pi$ and $\hat x = x_1+x_2+x_3$ coincides on each arm with the signed axis coordinate, since exactly one coordinate is nonzero there. The amplitudes and phase shifts are chosen as
\begin{equation*}
	\vec A = (1,2,3), \quad \vec a = (0.3, 0.8, 1.3), \quad
	\vec B = (5,7,11), \quad \vec b = (0.5, 1.1, 1.7).
\end{equation*}
Since the profile $\sin(\omega\hat x)$ vanishes at the center vertex, displacements and rotations are continuous there, cf.~\eqref{EQ:timo_cont_hybrid}, and since the arms come in pairs of opposite orientation carrying a solution that is smooth across the center, the internal forces and moments satisfy the balance conditions \eqref{eq:balance}. The profile also vanishes at the six tips, so the Dirichlet data is identically zero for all times; in particular, the compatibility conditions hold to arbitrary order. The distinct per-component phase shifts ensure that all components of the primal and dual variables, as well as of their time derivatives, are nonzero, so that all couplings in \eqref{eq:elasticwave}, in particular the cross-product terms, are active in this numerical experiment.

We study the convergence behavior of the primal variables with respect to both discretization parameters,
using uniform refinement of the graph, i.e., repeated recursive subdivision of the edges, and uniform time steps.
Moreover, we repeat the experiments for different choices of polynomial degree $p$ and stabilization parameter $\tau$; see \cref{FIG:conv_plot}.

The observed convergence behavior agrees with the theoretical predictions of \cref{thm:fully_d}.
In time, cf.~\cref{FIG:conv_plot:dt}, the Crank--Nicolson scheme converges with order two,
as predicted, while the Gauss collocation methods with two and three stages attain their
classical orders four and six.
In space, cf.~\cref{FIG:conv_plot:h}, the primal variables converge with the predicted rate
$h^{p+1-|s|}=h^{p+1}$ for $\tau\simeq 1$ and all tested polynomial degrees $p=1,3,5$.
When the stabilization is varied at $p=3$, cf.~\cref{FIG:conv_plot:tau}, the predicted rate $h^{p}$
is attained for $\tau\simeq h$,
whereas for $\tau\simeq h^{-1}$ the observed rate exceeds the predicted one by at least one order,
as discussed in \cref{rem:sharpness}.
The numerical experiments further indicate that the hybrid variable converges with order $p+2$
in the norm induced by $\Mgraph$.
This resembles the superconvergence of numerical traces established for related HDG methods
in~\cite{Cockburn2008}.

\begin{figure}
  \newcommand{\figurewidth}{4.6cm}\newcommand{\figureheight}{4.6cm}\centering
  \hspace*{46pt}\begin{subfigure}[b]{\figurewidth}
    \providecommand{\figurewidth}{\linewidth}
\providecommand{\figureheight}{\linewidth}
\providecommand{\plotdatadir}{}
\providecolor{mplC0}{HTML}{1F77B4}\providecolor{mplC1}{HTML}{FF7F0E}\providecolor{mplC2}{HTML}{2CA02C}\providecolor{mplC3}{HTML}{D62728}\providecolor{mplC4}{HTML}{9467BD}\providecolor{mplC5}{HTML}{8C564B}\begin{tikzpicture}[baseline, trim axis left, trim axis right]
\begin{axis}[
  scale only axis,
  width=\figurewidth,
  height=\figureheight,
  cycle list={{mplC0},{mplC1},{mplC2},{mplC3},{mplC4},{mplC5}},
  xmode=log, log basis x={2},
  ymode=log, log basis y={10},
  ymin=1e-11, ymax=10,
  xlabel={time step $\Delta t$\strut},
  ylabel={max rel. $L^2$ error},
  legend pos=south east,
  legend style={legend cell align=left},
  legend entries={
    {\hspace{-.6cm}stages},
    {1},
    {2},
    {3}
  }
]
\addlegendimage{empty legend}
\foreach \i in {0,...,2}{
  \addplot+[
    mark=+,
    unbounded coords=discard,
    x filter/.expression={\thisrow{series} == \i ? \pgfmathresult : nan},
  ] table[x=nt, y=e_rel, col sep=comma] {\plotdatadir ne9-11-conv-t-stages-hom.csv};
}
\addplot[gray, thin, mark=none, forget plot] coordinates {(0.03125,0.004) (0.0625,0.004) (0.0625,0.016) (0.03125,0.004)};
\node[below, gray, font=\footnotesize] at (axis cs:0.04419417382415922,0.004) {1};
\node[right, gray, font=\footnotesize] at (axis cs:0.0625,0.008) {2};
\addplot[gray, thin, mark=none, forget plot] coordinates {(0.03125,1.5e-06) (0.0625,1.5e-06) (0.0625,2.4e-05) (0.03125,1.5e-06)};
\node[below, gray, font=\footnotesize] at (axis cs:0.04419417382415922,1.5e-06) {1};
\node[right, gray, font=\footnotesize] at (axis cs:0.0625,6e-06) {4};
\addplot[gray, thin, mark=none, forget plot] coordinates {(0.03125,4e-10) (0.0625,4e-10) (0.0625,2.56e-08) (0.03125,4e-10)};
\node[below, gray, font=\footnotesize] at (axis cs:0.04419417382415922,4e-10) {1};
\node[right, gray, font=\footnotesize] at (axis cs:0.0625,3.2e-09) {6};
\end{axis}
\end{tikzpicture}\caption{varying number of stages}\label{FIG:conv_plot:dt}
  \end{subfigure}\hspace{20pt}\begin{subfigure}[b]{\figurewidth}
    \providecommand{\figurewidth}{\linewidth}
\providecommand{\figureheight}{\linewidth}
\providecommand{\plotdatadir}{}
\providecolor{mplC0}{HTML}{1F77B4}\providecolor{mplC1}{HTML}{FF7F0E}\providecolor{mplC2}{HTML}{2CA02C}\providecolor{mplC3}{HTML}{D62728}\providecolor{mplC4}{HTML}{9467BD}\providecolor{mplC5}{HTML}{8C564B}\begin{tikzpicture}[baseline, trim axis left, trim axis right]
\begin{axis}[
  scale only axis,
  width=\figurewidth,
  height=\figureheight,
  cycle list={{mplC0},{mplC1},{mplC2},{mplC3},{mplC4},{mplC5}},
  xmode=log, log basis x={2},
  ymode=log, log basis y={10},
  ymin=1e-11, ymax=10,
  xlabel={discretization size $h$\strut},
  yticklabels={},
  legend pos=south east,
  legend style={legend cell align=left},
  legend entries={
    {\hspace{-.6cm}$\tau\sim$},
    {$h^{-1}$},
    {$1$},
    {$h$}
  }
]
\addlegendimage{empty legend}
\foreach \i in {0,...,2}{
  \addplot+[
    mark=+,
    unbounded coords=discard,
    x filter/.expression={\thisrow{series} == \i ? \pgfmathresult : nan},
  ] table[x=nx, y=e_rel, col sep=comma] {\plotdatadir ne9-03-conv-tau.csv};
}
\addplot[gray, thin, mark=none, forget plot] coordinates {(0.03125,0.0002) (0.015625,0.0002) (0.015625,2.5e-05) (0.03125,0.0002)};
\node[above, gray, font=\footnotesize] at (axis cs:0.02209708691207961,0.0002) {1};
\node[left, gray, font=\footnotesize] at (axis cs:0.015625,7.071067811865475e-05) {3};
\addplot[gray, thin, mark=none, forget plot] coordinates {(0.015625,1e-09) (0.03125,1e-09) (0.03125,1.6e-08) (0.015625,1e-09)};
\node[below, gray, font=\footnotesize] at (axis cs:0.02209708691207961,1e-09) {1};
\node[right, gray, font=\footnotesize] at (axis cs:0.03125,4e-09) {4};
\end{axis}
\end{tikzpicture}\caption{varying stabilization $\tau$}\label{FIG:conv_plot:tau}
  \end{subfigure}\hspace*{8pt}
  \par\vspace{.8cm}
  \hspace*{46pt}\begin{subfigure}[b]{\figurewidth}
    \providecommand{\figurewidth}{\linewidth}
\providecommand{\figureheight}{\linewidth}
\providecommand{\plotdatadir}{}
\providecolor{mplC0}{HTML}{1F77B4}\providecolor{mplC1}{HTML}{FF7F0E}\providecolor{mplC2}{HTML}{2CA02C}\providecolor{mplC3}{HTML}{D62728}\providecolor{mplC4}{HTML}{9467BD}\providecolor{mplC5}{HTML}{8C564B}\begin{tikzpicture}[baseline, trim axis left, trim axis right]
\begin{axis}[
  scale only axis,
  width=\figurewidth,
  height=\figureheight,
  cycle list={{mplC0},{mplC1},{mplC2},{mplC3},{mplC4},{mplC5}},
  xmode=log, log basis x={2},
  ymode=log, log basis y={10},
  ymin=1e-12, ymax=10,
  xlabel={discretization size $h$\strut},
  ylabel={max rel. $L^2$ error},
  legend pos=south east,
  legend style={legend cell align=left},
  legend entries={
    {\hspace{-.6cm}deg},
    {1},
    {3},
    {5}
  }
]
\addlegendimage{empty legend}
\foreach \i in {0,...,2}{
  \addplot+[
    mark=+,
    unbounded coords=discard,
    x filter/.expression={\thisrow{series} == \i ? \pgfmathresult : nan},
  ] table[x=nx, y=e_rel, col sep=comma] {\plotdatadir ne9-12-conv-x-hom.csv};
}
\addplot[gray, thin, mark=none, forget plot] coordinates {(0.03125,0.005) (0.0625,0.005) (0.0625,0.02) (0.03125,0.005)};
\node[below, gray, font=\footnotesize] at (axis cs:0.04419417382415922,0.005) {1};
\node[right, gray, font=\footnotesize] at (axis cs:0.0625,0.01) {2};
\addplot[gray, thin, mark=none, forget plot] coordinates {(0.03125,8e-07) (0.0625,8e-07) (0.0625,1.28e-05) (0.03125,8e-07)};
\node[below, gray, font=\footnotesize] at (axis cs:0.04419417382415922,8e-07) {1};
\node[right, gray, font=\footnotesize] at (axis cs:0.0625,3.2e-06) {4};
\addplot[gray, thin, mark=none, forget plot] coordinates {(0.03125,6e-11) (0.0625,6e-11) (0.0625,3.84e-09) (0.03125,6e-11)};
\node[below, gray, font=\footnotesize] at (axis cs:0.04419417382415922,6e-11) {1};
\node[right, gray, font=\footnotesize] at (axis cs:0.0625,4.8e-10) {6};
\end{axis}
\end{tikzpicture}\caption{varying degree $p$}\label{FIG:conv_plot:h}
  \end{subfigure}\hspace{20pt}\begin{subfigure}[b]{\figurewidth}
    \providecommand{\figurewidth}{\linewidth}
\providecommand{\figureheight}{\linewidth}
\providecommand{\plotdatadir}{}
\providecolor{mplC0}{HTML}{1F77B4}\providecolor{mplC1}{HTML}{FF7F0E}\providecolor{mplC2}{HTML}{2CA02C}\providecolor{mplC3}{HTML}{D62728}\providecolor{mplC4}{HTML}{9467BD}\providecolor{mplC5}{HTML}{8C564B}\begin{tikzpicture}[baseline, trim axis left, trim axis right]
\begin{axis}[
  scale only axis,
  width=\figurewidth,
  height=\figureheight,
  cycle list={{mplC0},{mplC1},{mplC2},{mplC3},{mplC4},{mplC5}},
  xmode=log, log basis x={2},
  ymode=log, log basis y={10},
  ymin=1e-12, ymax=10,
  xlabel={discretization size $h$\strut},
  yticklabels={},
  legend pos=south west,
  legend style={legend cell align=left},
  legend entries={
    {\hspace{-.6cm}deg},
    {1},
    {3},
    {5}
  }
]
\addlegendimage{empty legend}
\foreach \i in {0,...,2}{
  \addplot+[
    mark=+,
    unbounded coords=discard,
    x filter/.expression={\thisrow{series} == \i ? \pgfmathresult : nan},
  ] table[x=nx, y=e_trace, col sep=comma] {\plotdatadir ne9-12-conv-trace-hom.csv};
}
\addplot[gray, thin, mark=none, forget plot] coordinates {(0.015625,0.00015) (0.03125,0.00015) (0.03125,0.0012) (0.015625,0.00015)};
\node[below, gray, font=\footnotesize] at (axis cs:0.02209708691207961,0.00015) {1};
\node[right, gray, font=\footnotesize] at (axis cs:0.03125,0.0004242640687119285) {3};
\addplot[gray, thin, mark=none, forget plot] coordinates {(0.03125,1e-09) (0.0625,1e-09) (0.0625,3.2e-08) (0.03125,1e-09)};
\node[below, gray, font=\footnotesize] at (axis cs:0.04419417382415922,1e-09) {1};
\node[right, gray, font=\footnotesize] at (axis cs:0.0625,5.6568542494923804e-09) {5};
\addplot[gray, thin, mark=none, forget plot] coordinates {(0.0625,1.5e-11) (0.125,1.5e-11) (0.125,1.92e-09) (0.0625,1.5e-11)};
\node[below, gray, font=\footnotesize] at (axis cs:0.08838834764831845,1.5e-11) {1};
\node[right, gray, font=\footnotesize] at (axis cs:0.125,1.697056274847714e-10) {7};
\end{axis}
\end{tikzpicture}\caption{error in hybrid variable $\vec \lambda$}\label{FIG:conv_plot:trace}
  \end{subfigure}\hspace*{8pt}
  \caption{Maximum $L^2$-errors for the primal variables in
    \subref{FIG:conv_plot:dt}--\subref{FIG:conv_plot:h},
    and for the hybrid variable $\vlh$, measured in the norm induced by $\Mgraph$,
    in \subref{FIG:conv_plot:trace}.
  Panel \subref{FIG:conv_plot:dt} shows the error with respect to
  the time step size $\Delta t$, varying the number of stages of the Gauss collocation method.
  The remaining panels show the error with respect to the discretization size $h$ (beam length):
  panel \subref{FIG:conv_plot:tau} varies the stabilization parameter $\tau\simeq h^s$
  with $s=-1,0,1$ (blue, orange, green) at $p=3$,
  while panels \subref{FIG:conv_plot:h} and \subref{FIG:conv_plot:trace}
  vary the polynomial degree $p=1,3,5$ (blue, orange, green) at $\tau=1$.
  }
   \label{FIG:conv_plot}
\end{figure}

\subsection{Verification of the spectral properties of the preconditioner}

The purpose of the second numerical experiment is to verify the theoretical
properties of the preconditioner introduced in \cref{sec:precond}, namely the
robustness of the preconditioned conjugate gradient method with respect to the
subdomain size and to the time step size, cf.~\cref{thm:condition,thm:spectral_equi}.
As a realistic test case,
we simulate elastic wave propagation in an approximately
$8\,\mathrm{mm} \times 8\,\mathrm{mm}$ piece of paper, for which our
collaborators at the Fraunhofer-Chalmers Centre (FCC) provided the spatial
network and material parameters.

The spatial network consists of about 1.72M nodes and 3.31M edges. The initial data is the network at rest, while the right-hand side $\vf_\mathrm{n}$ is constant. All outer boundaries are clamped with trace displacement and rotation set to zero.
We use polynomial degree 3 on all edges for the HDG discretization.
The resulting linear system of equations is solved by the preconditioned conjugate gradient method
with the domain decomposition preconditioner as described in \cref{sec:precond}.
The number of subdomains is varied in the experiments below, and the corresponding local problems are solved independently and fully in parallel, each using a direct solver.

Two frames of the simulation are shown in \cref{FIG:paper_wave:frames}.
\Cref{FIG:paper_wave:iters} shows the convergence of the preconditioned conjugate gradient method for varying relative subdomain diameter $H$:
the iteration counts remain nearly uniform, as predicted by \cref{thm:condition}.
The mild increase of iterations required to reach the same relative error in the energy norm
is due to a deterioration of the homogeneity constant $\sigma$ and of the constant $\mu$ related to the Friedrichs--Poincar\'e inequality; see \cref{tab:constants} for estimates computed following~\citep[Sec.~6.2]{GoHeMa22}.
In \cref{FIG:paper_wave:dt}, the iteration counts are seen to remain robust as the time step is reduced, in accordance with the $\Delta t$-independence of the constants in \cref{thm:spectral_equi,thm:condition}. The observed improvement for smaller time steps is not predicted by these bounds; it reflects the increasing dominance of the mass term $(\Delta t)^{-2}\Mgraph$, which renders the condensed operator progressively more local.

\begin{figure}
  \begin{minipage}[c]{.55\linewidth}
    \centering
    \includegraphics[width=\linewidth]{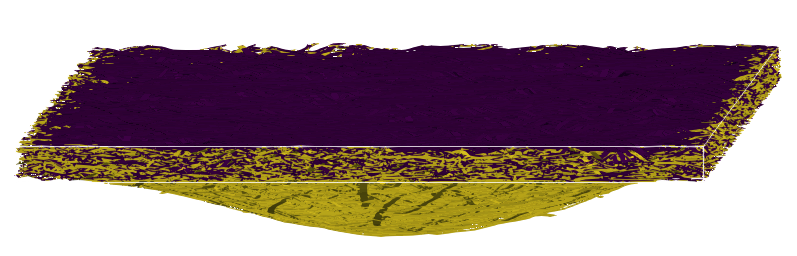}
    \caption{Two frames of a zoomed-in simulation
      at times 0 and $T/2$ are shown, where $T=2.3\,\mu\mathrm{s}$ is the period.}
    \label{FIG:paper_wave:frames}
  \end{minipage}\hfill
  \begin{minipage}[c]{.45\linewidth}
    \centering
    \captionof{table}{Estimates of constants in \cref{thm:condition}.}
    \label{tab:constants}
    \begin{filecontents*}{gortz-fiber2raw.csv}
# extent_x = 8000.4
# extent_y = 8000.78
# R0 = 62.506
# lc_min_p2 = 1420.1
# lc_min_p50 = 1713.53
# lc_min_p98 = 6203.75
# c_long_p50 = 5.16398e+09
# c_shear_p50 = 1.7224e+09
# t_cross = 4.64513e-06
# T1_shear = 6.56921e-06
# T1_bend = 6.90383e-06
# T1_est = 9.52981e-06
n,R,Rinv,sigma,sigma_sqrt,cell_min,cell_max,cells_empty,cells_total,R_over_lc_p50,R_over_lc_p50_sq,R_over_lc_p2,R_over_lc_p2_sq,mu,mu_mean,avg_invlam2,mu_eval,mu_viol,mu_skip
4,1000.0980014274764,0.0009999020081758621,1.1073737023777641,1.052318251470421,2395490.5194835546,2652703.2055713376,0,16,0.5836487943230171,0.3406459151147115,0.7042451370426722,0.4959612130482522,0.6294486316041112,0.6038519244265088,1459476.543580411,16,0,0
8,500.0490007137382,0.0019998040163517242,1.2519744376563393,1.118916635704528,568610.313696344,711885.577735575,0,64,0.29182439716150854,0.08516147877867787,0.3521225685213361,0.12399030326206305,0.7021681022702488,0.6409245278260329,411336.596881167,64,1,0
16,250.0245003568691,0.0039996080327034484,1.5810166753653128,1.2573848557085905,129706.86836163282,205068.72178915498,0,256,0.14591219858075427,0.021290369694669468,0.17606128426066806,0.030997575815515762,0.8340142887428281,0.7166058528364866,128689.66241347406,256,3,0
32,125.01225017843456,0.007999216065406897,2.370502795602126,1.5396437235939118,27837.53783556022,65988.96126187546,0,1024,0.07295609929037714,0.005322592423667367,0.08803064213033403,0.0077493939538789405,1.2445768276544888,0.9816070510063046,60503.267892858225,1024,5,0
\end{filecontents*}
\pgfplotstabletypeset[
      col sep=comma,
      comment chars={\#},
      columns={n,sigma,mu},
      columns/n/.style={column name={$H^{-1}$}, int detect},
      columns/R/.style={column name={$R$}, fixed, fixed zerofill, precision=1, 1000 sep={}},
      columns/sigma/.style={column name={$\sigma$}, fixed, fixed zerofill, precision=2},
      columns/mu/.style={column name={$\mu$}, fixed, fixed zerofill, precision=2},
      every head row/.style={before row=\toprule, after row=\midrule},
      every last row/.style={after row=\bottomrule},
    ]{gortz-fiber2raw.csv}
  \end{minipage}
\end{figure}

\begin{figure}
  \centering
  \hspace*{50pt}\begin{subfigure}{4.8cm}
  \providecommand{\figurewidth}{\linewidth}
\providecommand{\plotdatadir}{}
\begin{tikzpicture}[trim axis left, trim axis right]
\begin{axis}[
  scale only axis, width=4.8cm, height=4.2cm,
  ymode=log, log basis y={10},
  ymin=1e-10, ymax=10,
  xlabel={iteration},
  ylabel={$\|u - u^{(\ell)}\|_{\text{E,rel}}$},
  legend pos=north east,
  legend style={font=\footnotesize, legend cell align=left},
  legend entries={
    {\hspace{-.6cm}H},
    {1/32},
    {1/16},
    {1/8},
    {1/4},
  }
]
\addlegendimage{empty legend}
\foreach \i in {1,0,3,2}{
  \addplot+[
    mark=,
    unbounded coords=discard,
    x filter/.expression={\thisrow{series} == \i ? \pgfmathresult : nan},
  ] table[x=it, y=enorm, col sep=comma] {\plotdatadir ne18-26-fiber-timowave-H.csv};
}
\end{axis}
\end{tikzpicture}
  \subcaption{\label{FIG:paper_wave:iters} varying subdomain size}
  \end{subfigure}\hfill
  \begin{subfigure}{4.8cm}
  \providecommand{\figurewidth}{\linewidth}
\providecommand{\plotdatadir}{}
\begin{tikzpicture}[trim axis left, trim axis right]
\begin{axis}[
  scale only axis, width=4.8cm, height=4.2cm,
  ymode=log, log basis y={10},
  ymin=1e-10, ymax=10,
  xlabel={iteration},
  yticklabels={},
  legend pos=north east,
  legend style={font=\footnotesize},
  cycle list={blue, red, brown!60!black, black, green!60!black},
  legend entries={
    {T/2},
    {T/4},
    {T/8},
    {T/16},
    {T/32}
  }
]
\foreach \i in {1,3,4,0,2}{
  \addplot+[
    mark=,
    unbounded coords=discard,
    x filter/.expression={\thisrow{series} == \i ? \pgfmathresult : nan},
  ] table[x=it, y=enorm, col sep=comma] {\plotdatadir ne18-25-fiber-timowave-dt.csv};
}
\end{axis}
\end{tikzpicture}
  \subcaption{\label{FIG:paper_wave:dt} varying time step size}
  \end{subfigure}\hspace*{8pt}
  \caption{
    The relative error in the energy norm
    with respect to the iteration number is shown.
    \cref{FIG:paper_wave:iters} shows results for varying relative subdomain diameter,
    for a single time step equal to half a period $T/2=1.15\,\mu\mathrm{s}$,
    while \cref{FIG:paper_wave:dt} shows varying time step sizes relative to the period for
    $H=1/16$.
  }
\end{figure}

\subsection{Realistic experiment}

The purpose of the final experiment is to demonstrate that the proposed
solver scales to industrially relevant problem sizes.
We consider a $20\,\mathrm{mm}\times20\,\mathrm{mm}$
fiber network realization comprising about 10.2M nodes and 14.6M edges,
again provided by the FCC.
All nodes within $0.4\,\mathrm{mm}$ of
the four lateral boundaries are clamped, with trace displacement and rotation
set to zero. Starting from the network at rest, the sheet is excited by a tap
at its center: a transverse body force, Gaussian in space and time with
widths $\sigma_x = 250\,\mu\mathrm{m}$ and $\sigma_t = 0.15\,\mu\mathrm{s}$.

We again use polynomial degree 3 for the HDG discretization, resulting in a
trace system of $6.1\cdot10^7$ unknowns with $1.3\cdot10^9$ nonzero entries,
and the Crank--Nicolson scheme with time step
$\Delta t = \sigma_t/2 = 75\,\mathrm{ns}$, simulating until
$T = 6.3\,\mu\mathrm{s}$, shortly before the wave front reaches the clamped
boundary. The preconditioner uses an artificial coarse mesh of $16\times16$
cells, inducing 256 subdomains of diameter $H\approx1.2\,\mathrm{mm}$.
The subdomain problems are solved by sparse Cholesky
factorizations, and the coarse problem by a direct solve.

Across all 84 time steps, the preconditioned conjugate gradient method requires on average 44.0 iterations to reach a relative tolerance of $10^{-9}$; one iteration takes about 4.2 seconds on 128 MPI ranks of a
dual-socket AMD EPYC~7713 system. The Krylov iterations thus account for 90\% of the total runtime of about five hours, with the preconditioner
setup taking 141 seconds and the remaining 21 seconds per time step spent on local reconstruction, error and energy evaluation, and the in-situ output of the trajectory and per-edge energies (61\,GB in total). \Cref{FIG:large_exp:final} shows the transverse
displacement at the final time: the ellipsoid wave front has traveled about $9.6\,\mathrm{mm}$ from the tap, reflecting the anisotropy of the fiber orientation. The required memory is around 840\,GB, with the direct local solves for the subdomains accounting for over 90\%.
\Cref{FIG:large_exp:energy} shows the
evolution of the discrete energy \eqref{eq:disc_energy}, split into strain,
kinetic, and hybrid contributions. The tap injects energy until about
$1\,\mu\mathrm{s}$; afterwards strain and kinetic energy equilibrate, the
hybrid (stabilization) part remains below $0.2\%$ of the total, and the
total energy is essentially conserved, decaying by about $1.3\%$ over the
remaining $5\,\mu\mathrm{s}$. This drift does not contradict the exact
conservation established in \cref{sec:energy_cons}, which presumes exact
solution of the linear system \eqref{eq:saddle_fully_discrete} at each step:
the algebraic error admitted by the Krylov tolerance of $10^{-9}$, amplified
by the conditioning of the multiscale trace system, perturbs the conservative
scheme slightly at every step and accumulates over the 84 steps.

\begin{figure}
  \centering
  \begin{subfigure}[b]{.48\linewidth}
    \centering
    \includegraphics[width=.92\linewidth]{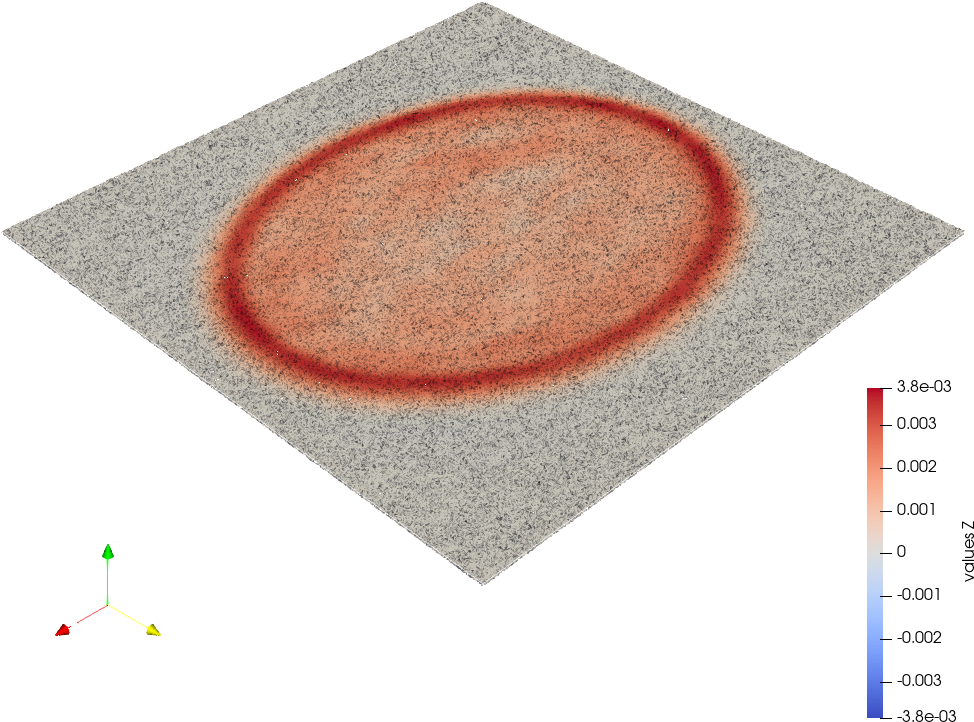}
    \caption{displacement at final time $T$}\label{FIG:large_exp:final}
  \end{subfigure}\hspace*{10pt}
  \begin{subfigure}[b]{.45\linewidth}
    \centering
    {\renewcommand{\plotdatadir}{}\newcommand{\figurewidth}{.82\linewidth}\providecommand{\figurewidth}{\linewidth}
\providecommand{\plotdatadir}{}
\begin{tikzpicture}
\begin{axis}[
  width=\figurewidth,
  xlabel={$t$ [s]},
  ylabel={energy [\%]},
  legend pos=outer north east,
  legend columns=1,
  legend style={font=\footnotesize},
]
\addplot+[mark=none, thick] table[x=t, y expr=100*\thisrow{strain}/108.9069427, col sep=comma] {\plotdatadir ne18-33-net3-clamp2-energy4.csv};
\addlegendentry{strain}
\addplot+[mark=none, thick] table[x=t, y expr=100*\thisrow{kinetic}/108.9069427, col sep=comma] {\plotdatadir ne18-33-net3-clamp2-energy4.csv};
\addlegendentry{kinetic}
\addplot+[mark=none, thick] table[x=t, y expr=100*\thisrow{hybrid}/108.9069427, col sep=comma] {\plotdatadir ne18-33-net3-clamp2-energy4.csv};
\addlegendentry{hybrid}
\addplot[black, thick] table[x=t, y expr=100*\thisrow{total}/108.9069427, col sep=comma] {\plotdatadir ne18-33-net3-clamp2-energy4.csv};
\addlegendentry{total}
\end{axis}
\end{tikzpicture}}\caption{energy over time}\label{FIG:large_exp:energy}
  \end{subfigure}
  \caption{Wave propagation on the large network.
    \cref{FIG:large_exp:final} shows the network warped by the displacement
    field (scaled by $4.6\cdot10^4$) and colored by the transverse component
    at the final time $T=6.3\,\mu\mathrm{s}$.
    \cref{FIG:large_exp:energy} shows the strain, kinetic, hybrid
    (stabilization), and total energy over time, as percentages of the maximum
    total energy, which is attained at the end of the tap.}
  \label{FIG:large_exp}
\end{figure}

\section{Conclusions and future work}\label{sec:conclusion}

We proposed and analyzed a hybridizable discontinuous Galerkin method for elastic
wave propagation in Timoshenko beam networks, a model for the dynamics of fiber-based materials such as paper.
The resulting semi-discrete system was discretized in time using a Crank--Nicolson scheme.
Exploiting the zero-dimensional nature of the network junctions, the global system at each time step was condensed to a system posed only on the network nodes, with a size independent of the polynomial degree. An implicit scheme was chosen because the large disparity of fiber segment lengths renders the CFL condition of explicit schemes prohibitive.

We carried out a projection-based a priori error analysis, establishing optimal-order convergence of the semi-discrete scheme and of the fully discrete scheme. The formulation is energy-conservative, and
the Crank--Nicolson discretization preserves this property exactly.
For the typically ill-conditioned condensed systems, we introduced a two-level overlapping additive Schwarz preconditioner on an artificial coarse mesh, whose analysis is based on a spectral equivalence between the condensed HDG operator and weighted graph operators. The numerical experiments confirmed the optimal convergence orders and the robustness of the preconditioner with respect to subdomain size and time step size, as well as scalability to industrially relevant problem sizes.

Several directions are left for future work. On the algorithmic side, we plan to investigate multilevel variants of the preconditioner based on algebraic coarsening strategies and requiring less overlap between subdomains. On the modeling side, the linearly elastic beam network could be extended to account for nonlinear phenomena, such as geometric nonlinearities, or nonlinear constitutive laws.

\section*{Acknowledgments}
The authors would like to thank M. Görtz (Fraunhofer-Chalmers Centre) for valuable discussions on the construction of the fiber networks, for providing network realizations, and for their support with troubleshooting.
The authors would like to thank R. Schnaubelt (Karlsruhe Institute of Technology) as well, for valuable discussions on regularity and well-posedness.
M.~Hauck and J.~Holten acknowledge funding from the Deut\-sche Forschungsgemeinschaft (DFG, German Research Foundation) -- Project-ID 258734477 -- SFB 1173. 
Furthermore,  A.~M\aa lqvist and L.~Swoboda acknowledge funding from the Swedish Research Council (VR) -- Project-ID 2023-03258\_VR.
A.\ Rupp has been supported by the Deutsche Forschungsgemeinschaft (DFG, German
Research Foundation) -- 577175348.
Parts of this work were conducted during the authors’ stay at the
Hausdorff Research Institute for Mathematics funded by the Deutsche Forschungsgemeinschaft (DFG, German Research Foundation) under Germany's Excellence Strategy – EXC-2047/2 – 390685813.

The authors used ChatGPT and Claude Code with Fable 5 and Opus 5 to assist in refining the language of the manuscript and in refining and checking arguments in the proofs. Claude Code with Fable 5 and Opus 5 was additionally used to assist in the implementation of the numerical experiments.

\appendix

\section{Well-posedness}\label{sec:wellposed}

\begin{proof}[Proof of \cref{lem:hdm_well}]
\resetproofsteps
We outline the standard Rothe argument.

\proofstep{Discretization in time}\label{stp:wp_disc}
We discretize \cref{eq:saddle} in time by the Crank--Nicolson scheme of \cref{sec:fullydisc}, applied without discretizing in space and with the notation introduced there.
Since the dual and hybrid variables carry no initial data of their own, they are placed at the staggered points as \((\vec q^{k+\frac12},\vlh^{k+\frac12})_k\), while \((\vec y^k)_k\) sits at the grid points \(t_k=k\Delta t\); the scheme is initialized from \(\vec y_0\), \(\vec y_1\) and \eqref{eq:comp_cond}.

\proofstep{Solvability}\label{stp:wp_solv}
Each time step is uniquely solvable by the condensation argument of \cref{lem:fd_well}: the equations decouple into edge-local problems, which are the static problem of \citep[Lem.~3.2]{hauck2025} perturbed by the coercive term \((\Delta t)^{-2}c(\cdot,\cdot)\), and the condensed nodal problem is symmetric positive definite.

\proofstep{Energy identity}\label{stp:wp_energy}
The discrete energy \[\mathfrak E^{k+\frac12}\coloneqq\tfrac12 a(\vec q^{k+\frac12},\vec q^{k+\frac12})+\tfrac12 c(D_t^{k+\frac12}\vec y,D_t^{k+\frac12}\vec y)\] obeys the analogue of the identity of \cref{lem:disc_energy}, driven only by the source term and the Dirichlet data.

\proofstep{Nodal trace estimate}\label{stp:wp_trace}
Though not controlled by \(\mathfrak E^{k+\frac12}\) at any fixed time, the nodal traces of the dual variable satisfy
\[
\Delta t\sum_{k}\|\vec q^{k}\|_{\setEdge}^2
\lesssim
(1+T)\Emax+\|\vec f\|_{L^2(0,T;\Ltwo)}^2,
\qquad
\Emax\coloneqq\max_k\mathfrak E^{k+\frac12},
\]
the time-discrete counterpart of the hidden-regularity estimates for hyperbolic problems;
we follow the multiplier argument of~\citep[Thm.~2.1]{LasieckaLionsTriggiani86},
see also~\cite{Lions88,MillaMirandaMedeiros88}, with an edgewise affine multiplier.

\proofstep{Energy bound}\label{stp:wp_bound}
Summing the identity of \cref{stp:wp_energy} and bounding the source term by a Riemann sum and the Dirichlet term by the Cauchy--Schwarz inequality and \cref{stp:wp_trace}, Young's inequality bounds \(\Emax\) by the data, uniformly in \(\Delta t\).

\proofstep{Passage to the limit}\label{stp:wp_limit}
By linearity, the difference quotients satisfy the same scheme with differenced data, so that \crefrange{stp:wp_energy}{stp:wp_bound} bound them uniformly in \(\Delta t\); the momentum and constitutive equations bound the remaining norms of the dual and hybrid variable.
A standard weak-\(*\) compactness argument for the interpolants yields a limit solving \cref{eq:saddle} with the asserted regularity and initial values; uniqueness follows as in \cref{lem:fd_well}, since vanishing data force the energy of \cref{stp:wp_energy}, and hence the solution, to vanish.
\end{proof}

\bibliographystyle{alpha}
\bibliography{bib.bib}

\typeout{get arXiv to do 4 passes: Label(s) may have changed. Rerun}
\end{document}